\newcommand{\I}{\mathrm{I}}
\newtheorem{theorem}{Theorem}[section]
\newtheorem{lemma}[theorem]{Lemma}
\newtheorem{prop}[theorem]{Proposition}
\theoremstyle{remark}
\newtheorem{remark}[theorem]{Remark}
\author{Ilya Vinogradov\footnote{Princeton University, Princeton, NJ}}
\date{\today}
\title{Effective bisector estimate with application to Apollonian circle packings}
\begin{document}
\maketitle

\makeatletter

\renewcommand{\chapter}{\@startsection{chapter}{0}{0pt}{3.5ex plus 0.2ex minus 0.1ex}{2.3ex plus 0.1ex minus 0.1ex}{\center\normalfont\sc\large}}

\renewcommand{\section}{\@startsection{section}{1}{0pt}{2ex plus 0.2 ex minus 0.1ex}{-1ex}{\normalfont\bf}}

\makeatother

\renewcommand{\thesection}{\arabic{chapter}.\arabic{section}}

\renewcommand{\thechapter}{\arabic{chapter}.}

\renewcommand{\thetheorem}{\arabic{chapter}.\arabic{section}.\arabic{theorem}}

\renewcommand{\thefigure}{\arabic{chapter}.\arabic{figure}}

\numberwithin{equation}{section}

\newcommand{\If}{i\hspace*{-0.5ex}f\hspace*{0.1ex}}
\newcommand{\Ih}{i\hspace*{-0.2ex}h\hspace*{0.1ex}}
\newcommand{\Ie}{i\hspace*{-0.2ex}e\hspace*{0.1ex}}

\begin{abstract}
Let $\Gamma<\PSL(2,\C)$ be a geometrically finite non-elementary discrete subgroup, and let its critical exponent $\delta$ be greater than 1. We use representation theory of $\PSL(2,\C)$ to prove an effective bisector counting theorem for $\Gamma$, which allows counting the number of points of $\Gamma$ in general expanding regions in $\PSL(2,\C)$ and provides an explicit error term. We apply this theorem to give power savings in the Apollonian circle packing problem and related counting problems. 
\end{abstract}

\tableofcontents

\chapter{Introduction}

\section{Lattice point counting problem.} Let $G$ be a topological group with a norm $\|\cdot\|$ and let $\Gamma$ be a discrete subgroup. The question of estimating $$N(R)=\#\{\gamma\in\Gamma\colon\|\gamma\|<R\}$$ as $R$ grows is known as the \textit{lattice point counting problem} and was first asked by Gauss for $G=\R^2$, $\|\cdot\|=\|\cdot\|_{L^2}$, and $\Gamma=\Z^2$; he showed that $$N(R)=\pi R^2+E(r)$$ with $|E(r)|<2\sqrt 2 \pi r$. The problem of improving the bound on the error function $E(r)$ is now called  the \textit{Gauss circle problem}. The current record $E(r)=O(r^{\sigma})$ for $\sigma=\frac{131}{208}$ (ignoring logarithmic factors) is  held by Huxley \cite{huxley_integer_points, huxley_exponential3}, who improved previous results by van der Corput \cite{vdcorput_zum} ($\sigma=\frac{27}{41}$), Vinogradov ($\sigma=\frac{34}{53}$), and  others. 

Delsarte \cite{delsarte_gitter_1942} and Huber \cite{huber_neue_1956, huber_analytischen_1959} initiated the study of this question in the hyperbolic setting: $G=\SL(2,\R)$, $\Gamma$ a cocompact lattice, and the norm comes from the distance in $\H^2$. The difficulty that naturally arises here is that due to hyperbolic expansion: the measure of the boundary of the expanding ball grows roughly at the same rate as the measure of the ball itself. For this reason it is not clear at the first sight that the main term for $N(R)$ is given by the volume of the ball of radius $R$. Selberg \cite{selberg_harmonic_1956} was able to produce an excellent error term in the function $N(R)$ for any lattice $\Gamma$ using his celebrated trace formula.

The question of understanding the growth of $N(R)$ for infinite covolume subgroups $\Gamma$ arose naturally. Patterson  \cite{patterson_limit_set} and Sullivan \cite{sullivan_density, sullivan_entropy} developed extremely useful machinery for analyzing infinite covolume geometrically finite groups $\Gamma$ for $G=\SL(2,\R)$ and more generally for $G=\SO(n,1).$ To each such $\Gamma$ they associated a $\Gamma$-invariant probability measure on the boundary of the hyperbolic plane ($n$-space) supported on the limit set of $\Gamma$, and related it to the spectrum of the Laplacian on the manifold $\Gamma\quot \H$ when the base eigenvalue is at least $\left(\frac{n-1}2\right)^2$. Lax and Phillips \cite{lax_phillips} used wave equation techniques to produce a very good error term for the counting problem in $\H^n$ for general $n$ and showed that the spectrum of the Laplacian on $\Gamma\quot \H^n$ has but finitely many non-tempered eigenvalues. The main order of growth in this case is related to the base eigenvalue $ \delta(n-1-\delta)$ of the Laplacian on $\Gamma\quot \H^n$ for some $\delta=\delta_\Gamma\in(\frac{n-1}{2},n-1)$, and equals $\const\cdot e^{\delta R}$ for the number of points in a ball of radius $R$. This is consistent with the fact that the volume of such ball grows like $e^{(n-1)R}$. 

\section{Sector and bisector count.} A question which arises in applications of lattice point counting problem is counting in more general expanding sets, not just in balls. Both in the finite and infinite covolume case this problem can be approached by cutting a ball into sectors and using them to approximate a more general expanding set. In this setting the word \textit{bisector} refers a subset $S$ of a semisimple Lie group $G=KA^+K$ (known as Cartan or polar decomposition) for which there exist contractible $S_{K_1}\subset K$, $S_A\subset A^+$, $S_{K_2}\subset K$ such that $S=S_{K_1}S_A S_{K_2}.$ When $S_{K_1}=K=S_{K_2}$, the set $S$ is nothing but a ball. When one of $S_{K_1}$, $S_{K_2}$ is all of $K$, the set $S$ is referred to as a \textit{sector}. In the case $G=\SO(n,1)$ and $S_{K_2}=K$, the set $S_{K_1}A^+\subset G$ can be viewed as a subset of $\H^n=G/K$, and in the ball model this set will be a hyperbolic (and in fact Euclidean) sector, motivating these definitions. 

In the case of lattices in $\SL(2,\R)$ a very strong result was obtained by Good \cite{good_local_1983}, and under additional assumptions of finite geometric property and critical exponent $\delta_\Gamma>\frac12$ Bourgain, Kontorovich, and Sarnak \cite{bourgain_sector_2010} proved a similar counting statement without assuming that $\Gamma$ is a lattice. A very general asymptotic bisector counting theorem for lattices was proven by Gorodnik and Nevo \cite{gorodnik_nevo_ergodic_2010}.   Oh and Shah \cite{oh_equidistribution_2010} proved an asymptotic bisector count for $G=\SO(n,1)$ for non-lattices, but their proof relies on measure rigidity and is not readily made effective. In the present paper we prove an effective bisector counting theorem for $G=\SO(3,1)$ for non-lattices (under additional assumptions). Our approach is most similar to that of \cite{bourgain_sector_2010}: the main ingredient is representation theory of $L^2(\Gamma\quot G)$. We develop explicit $K$-type decomposition for complementary series representations of $\PSL(2,\C)$ and then use the adjoint action of $G$ on conjugates of $\Gamma$ to avoid some  computation. 

The difficulty that arises in generalizing the current approach to $\SO(n,1)$ is that parts of the argument are deeply rooted --- albeit less so than that of \cite{bourgain_sector_2010} --- in the structure of $K$-types for complementary series representations of $\SO(3,1)$. Many computations use specific forms of eigenfunctions, which is the main obstacle preventing direct generalization to higher dimensions. Nevertheless we anticipate that an effective bisector estimate should hold for $\SO(n,1)$ for all $n\ge 4$. 

There is another method that can be used to prove bisector theorems (and related limit theorems) in the case when $\Gamma\subset \SO(n,1)$ is convex cocompact. In this case the problem can translated into the language of symbolic dynamics (cf. \cite{lalley_renewal_89, sharp_sector}); we do not pursue this method here. 

\section{Counting $\Gamma$-orbits in cones and hyperboloids.}
Let $Q$ be a form defined over $\Q$. The problem of counting integer vectors $x\in\Z^n$ such that $Q(x)=c\in\Z$ for a given $c$ is very well studied. For definite diagonal forms it is related to Waring's problem \cite{vaughan_wooley_waring}. For a general definite form one can ask for the growth rate of the number of solutions to $Q(x)=c$ as $c\to\infty$. In the case of an indefinite form one can fix $c$ such that $Q(x)=c$ has a solution, and ask for the growth rate of $$\#\{\|x\|<X\mid Q(x)=c\}$$ as $X\to\infty$ for some norm $\|\cdot\|$ on $\Z^n$. When the number of variables $n$ is sufficiently large compared to the degree of the form then the problem may be solved by the Hardy-Littlewood circle method \cite{vaughan_hardy_littlewood} for certain forms $Q$. Using different techniques Duke, Rudnick, and Sarnak \cite{duke_density_1993} and Eskin and McMullen \cite{eskin_mixing_1993} proved that the number of solutions $\#\{\|x\|<X\mid Q(x)=c\}$ grows like the volume of the corresponding ball (under additional assumptions on $Q$).

\begin{figure}[p]\begin{centering}\includegraphics[width=0.4\textwidth]{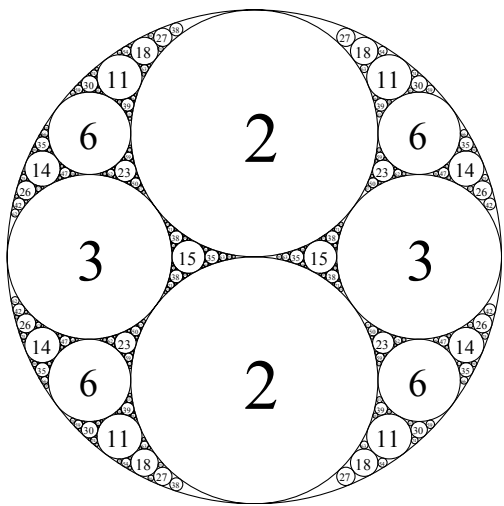}\caption{Bounded packing\label{fig:boundedpacking}}\end{centering}\end{figure}
\begin{figure}[p]\begin{centering}\includegraphics[width=0.75\textwidth]{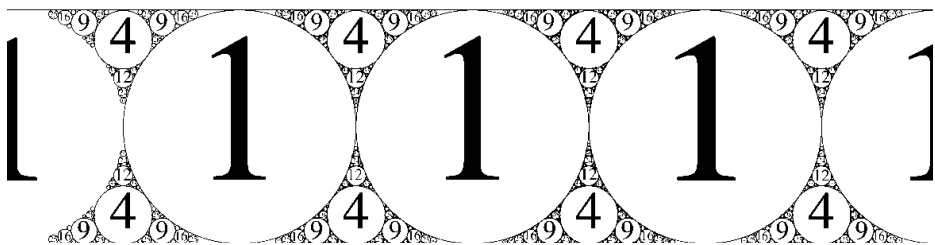}\caption{Unbounded packing\label{fig:unboundedpacking}}\end{centering}\end{figure}
\begin{figure}[p]\begin{centering}\includegraphics[width=0.5\textwidth]{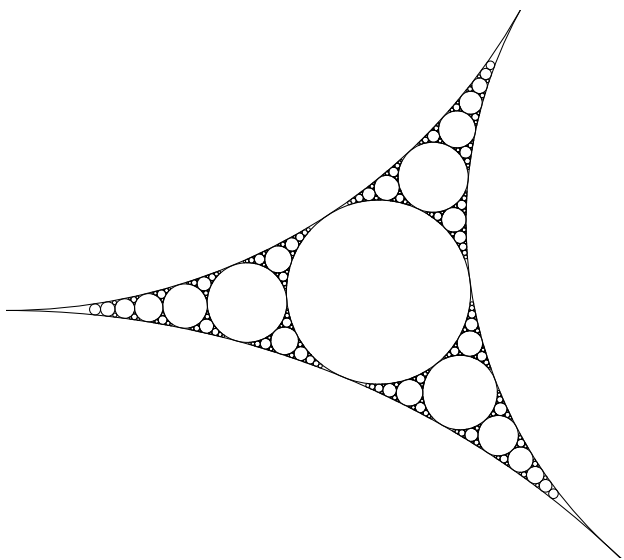}\caption{\label{fig:apollonian_packing_ideal}Apollonian packing in an ideal triangle.}\end{centering}\end{figure}

In a similar fashion one can count solutions to $\#\{\|x\|<X\mid Q(x)=c\}$ with the additional constraint that $x$ lies in a prescribed orbit of a lattice $\Gamma<\O_Q(\Z)$, but it is a harder question when $\Gamma$ is not a lattice. The bisector counting theorem allows us to compute $\#\{x\in\Gamma x_0 \colon \|x\|<X\}$ with an explicit error term for $G=\SO(3,1)$ and certain groups non-lattices $\Gamma$. Note that the varieties $\{Q=c\}$ are fundamentally different according as $c$ is positive (one-sheeted hyperboloid), negative (two-sheeted hyperboloid), or zero (cone), but our approach works in every case.

A particular instance of this question is the Apollonian circle packing problem. Figures \ref{fig:boundedpacking} and \ref{fig:unboundedpacking} show Apollonian circle packings. For a bounded packing $P$ let $N^P(T)$ be the number of circles in the packing having curvature at most $T$. For a periodic packing $N^P(T)$ is the number of such circles in one period. It has been shown \cite{lagarias1,  kontorovich_apollonian_2011} that the $N^P(T)$ is related to counting points of the orbit of $\Gamma$ in the cone on the so-called Descartes form $Q_D$ which has signature $(3,1)$. The group $\Gamma$ in this case is the Apollonian group (up to finite index) which has infinite index in $\O_{Q_D}(\Z)$. We show that $$N^P(T)=c_P T^\delta+ O(T^{\delta-\eps})$$ for constants $c_P$, $\delta$, and $\eps$. The power savings depend on the spectral gap for $\Gamma$, which is not known numerically. We prove the same counting statement about the packing in Figure \ref{fig:apollonian_packing_ideal}, which is known as the Apollonian gasket or the packing in an ideal triangle. Moreover we can prove effective versions for any circle packing theorem (not just the Apollonian one), such as the ones in \cite{oh_asymptotic_2012} and \cite{oh_counting_2010}.

\section{Acknowledgements.} I would like to acknowledge the help, inspiration, patience, and cheerful optimism of Alex Kontorovich, who introduced me to hyperbolic lattice point counting problems,  guided me in every respect, kindly allowed me to use his pictures, and supported my travel through NSF Grant DMS--120937. I am greatly indebted to Yakov G. Sinai for acquainting me with dynamical problems in number theory as well as for his kindness and support. I greatly appreciate discussions with Peter Sarnak who was always supportive and persistent. I would also like thank Stephen Miller for supplying me with vital references.

I am very greatful to Ali Altu\v g, Francesco Cellarosi, and Samuel Ruth for sharing the office with me and being always eager to answer my questions. I also thank Arul Shankar, Jonathan Luk, Mohammad Farajzadeh Tehrani, Kevin Hughes, P\'eter Varj\'u, and other graduate students who made my time at Princeton mathematically fruitful.

This manuscript would have been considerably less complete if it had not been scrutinized by  Hee Oh and Nicolas Templier, whom I thank most kindly. My degree would have never been completed without the help of Jill LeClair, who makes simple that which is intricate. 

Finally, I thank my wife, Milena Zhivotovskaya, and my family for their love and support.

\chapter{Statement of results}

\section{Bisector count.} Let $\Gamma <\PSL(2,\C)=G$ be a non-elementary geometrically finite discrete subgroup. 
The problem of counting the number of points in a $\Gamma$-orbit which lie in an expanding region in $G$ is well-studied in the case when $\Gamma\quot G$ has finite volume \cite{selberg_harmonic_1956, good_local_1983, duke_density_1993, eskin_mixing_1993}. 
The main term of such a count is always the volume of the expanding region. There is ample literature in the infinite volume case \cite{lalley_renewal_89, sharp_sector, kontorovich_hyperbolic_2009, kontorovich_apollonian_2011, oh_equidistribution_2010}. A major result in this setting is due to Lax and Phillips  \cite{lax_phillips}, who give an unsurpassed error term using a non-Euclidean wave equation. In the present paper we prove a more general counting theorem in the infinite volume case. 

Assume from now on that $\Gamma$ is no longer a lattice. The group $G$ naturally acts on $\H^3=\{x_1+ix_2+jy\mid x_1,x_2\in\R,y>0\}$ by M\"obius transformations (isometries preserving the hyperbolic distance $d$) with quaternion multiplication, and the orbit $\Gamma j\subset \H^3$ is a discrete set. The set of its limit points $\Lambda_\Gamma$ in $\d \H^3$ is known to be a Cantor set whose Hausdorff dimension we denote by $\delta.$ As $\Gamma$ has infinite covolume we have $0<\delta<2$ \cite{beardon_limit_1974}. The Patterson-Sullivan measure is a finite measure $\nu_\Gamma$ supported on $\Lambda_\Gamma$ which is a Hausdorff measure of dimension $\delta$. This measure is constructed (in essence) as the unique weak-$\ast$ limit of the family of measures $$\nu_{s,\Gamma}(z)=\frac{\displaystyle\sum_{\gamma\in\Gamma}e^{-s d(j,\gamma j)}\delta_{\gamma j}(z)}{\displaystyle\sum_{\gamma\in\Gamma}e^{-s d(j,\gamma j)}}$$ as $s\to\delta$ from the right \cite{patterson_limit_set, sullivan_entropy, sullivan_density}. It is the unique finite measure (up to scalar multiples) with the property that $$\frac{d\gamma_\ast\nu_\Gamma}{d\nu_\Gamma}(\xi)=e^{-\delta \beta_\xi(\gamma j,j)}$$ where $\beta_\xi(z_1,z_2)=\lim_{z\to\xi} d(z_1,z)-d(z_2,z)$ is the Busemann function and $\gamma_\ast\nu_\Gamma=\nu_\Gamma\circ \gamma^{-1}$ is the push-forward. 

The group $G$ acts on $L^2(\Gamma\quot G)$: for $g\in G$ and $f\in L^2(\Gamma\quot G)$ we have $$g.f(\Gamma x)=f(\Gamma xg),$$ which is the right regular representation. 
Suppose $\delta>1$; then the hyperbolic Laplacian acts on the smooth functions inside this space and has an eigenfunction with eigenvalue $\lambda_0=\delta(2-\delta)<1$. It has but finitely many eigenvalues $\lambda_n\in(0,1)$ with $0\le n\le d$, as was shown in \cite{lax_phillips}; arrange the eigenvalues in increasing order. We can write  $$\lambda_n=s_n(2-s_n)$$ for $1<s_0<s_1\le\dots s_d<2$ and decreasing in $n$. 
As a $G$-representation the right regular representation decomposes as $$V_{s_0}\oplus \dots\oplus V_{s_d}\oplus V_{\text{temp}},$$ where $V_{s_n}$ are isomorphic to complementary series representations with parameter $s_n$, and $s_0=\delta$. 

In this setting Lax and Phillips \cite{lax_phillips} prove the following counting theorem. 
\begin{theorem}[Lax-Phillips]\label{th:lax}
Assume $\delta>1$. Then $$\#\{\gamma\in\Gamma\colon d(j,\gamma j)<R\}= c_0 e^{\delta R}+c_1 e^{s_1R}+\dots +c_d e^{s_dR}+O(e^{\frac{\delta+1}2 R}R^5).$$
\end{theorem}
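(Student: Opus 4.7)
The plan is to combine the spectral decomposition of $L^2(\Gamma\quot G)$ with a smoothing argument, in the spirit of Selberg's approach to the lattice point problem, using the Lax-Phillips theory of the resolvent to control the continuous spectrum on the infinite-volume quotient. Fix a parameter $T>0$ and construct smooth bi-$K$-invariant radial approximants $\phi_R^\pm$ satisfying $\phi_R^-\le \mathbf{1}_{B_R}\le \phi_R^+$, where $B_R=\{g\in G\colon d(j,gj)<R\}$, with $\phi_R^+-\phi_R^-$ supported in an annular shell of radial width $T$ near the sphere $d(j,gj)=R$. This sandwiches $N(R)$ between the automorphic sums $\sum_\gamma \phi_R^\pm(\gamma)$, which we evaluate via the pre-trace formula: decompose the kernel $K_R^\pm(g_1,g_2)=\sum_{\gamma\in\Gamma}\phi_R^\pm(g_1^{-1}\gamma g_2)$ on the spectral side of $L^2(\Gamma\quot G)$ and evaluate at $g_1=g_2=e$. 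Since $\phi_R^\pm$ is bi-$K$-invariant, only the spherical vector of each irreducible component contributes, so $V_{s_n}$ produces $\widehat{\phi}_R^\pm(s_n)\,|\psi_n(j)|^2$ where $\widehat{\phi}$ is the spherical (Harish-Chandra) transform and $\psi_n$ is the normalized spherical vector, while $V_{\mathrm{temp}}$ gives an integral over $\re s=1$.

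The spherical transform is computed from the explicit spherical function on $\H^3$, namely $\Phi_s(r)=\sh((s-1)r)/((s-1)\sh r)$. A direct computation against the volume form $\sh^2 r\,dr$ yields $\widehat{\mathbf{1}}_{B_R}(s)=a(s)e^{sR}+O(e^{(2-s)R})$ for $s\in(1,2)$, while for $s=1+it$ the transform is $O(e^R)$ with polynomial decay in $|t|$ after integration by parts. Thus each discrete eigenvalue $s_n$ yields the main term $c_n e^{s_n R}$ in the statement, and the tempered part contributes at most $e^R$ times a Sobolev norm of $\phi_R^\pm$, which is polynomial in $R$ and in $1/T$.

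For the smoothing loss, the sandwich gives $|N(R)-\sum_\gamma \phi_R^\pm(\gamma)|\le N(R+T)-N(R-T)$, and Sullivan's coarse asymptotic $N(R)\asymp e^{\delta R}$ bounds this by $O(Te^{\delta R})$ for $T\le 1$. Balancing the smoothing loss $Te^{\delta R}$ against the spectral error $\sim e^R R^p/T^q$ with the choice $T\sim e^{-(\delta-1)R/2}$ (the unique balance producing the exponent $(\delta+1)/2$) and tracking polynomial factors from the Sobolev estimates yields the final bound $O(e^{(\delta+1)R/2}R^5)$, with the precise power of $R$ dictated by how many derivatives are needed for absolute convergence of the $t$-integral.

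The main obstacle is controlling the tempered spectrum on the infinite-covolume quotient uniformly in $R$ and $T$: unlike in the lattice case there is no explicit Eisenstein series parametrization of the continuous spectrum, so one must invoke the Lax-Phillips resolvent estimates to bound the continuous part of the spectral expansion with polynomial dependence on the spectral parameter, and to guarantee that no exceptional spectrum accumulates at $\lambda=1$. The hypothesis $\delta>1$ is precisely what puts the base eigenvalue $\delta(2-\delta)$ into the non-tempered range, so that this scheme produces a genuine main term of size $e^{\delta R}$ rather than having it absorbed into the continuous-spectrum error.
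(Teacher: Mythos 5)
The paper does not prove this statement: Theorem \ref{th:lax} is quoted verbatim from Lax and Phillips \cite{lax_phillips}, and the author explicitly notes that their proof proceeds via the non-Euclidean wave equation. So there is no in-paper argument to compare against; your proposal must be judged as a reconstruction of the Lax--Phillips result itself. Your outline is the standard Selberg-style alternative (smoothed indicator, pre-trace formula, spherical transform, balancing), and the pieces you compute explicitly are correct: the spherical function $\sh((s-1)r)/((s-1)\sh r)$ matches the matrix coefficient used later in the paper, the transform of the ball indicator does give $a(s)e^{sR}+O(e^{(2-s)R})$, and the exponent $(\delta+1)/2$ does emerge from your balance provided the tempered error scales like $e^R/T$.

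The genuine gap is exactly the step you flag and then wave away. Bounding the tempered part of the automorphic kernel at the point $(e,e)$ with polynomial dependence on $R$ and $1/T$ requires knowing that the continuous spectral measure of the point-pair invariant on the infinite-volume quotient has Weyl-law-type (polynomial) growth in the spectral parameter; in the absence of an Eisenstein parametrization this is precisely the analytic content of Lax--Phillips, obtained there from energy estimates for the shifted wave equation and Huygens' principle in odd dimensions (which is also where the exponent $\frac{\delta+1}{2}=\frac12(\delta+1)$ arises, as the geometric mean of the point count $e^{\delta R}$ and the tempered decay $e^{R}$, and where the specific power $R^5$ comes from). Invoking ``the Lax--Phillips resolvent estimates'' to supply this bound makes the argument circular as a proof of their theorem; you would need to either carry out the wave-equation energy argument or establish the pointwise spectral-measure bound independently. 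Two smaller issues: (i) your smoothing loss $N(R+T)-N(R-T)\ll Te^{\delta R}$ does not follow from the coarse bound $N(R)\asymp e^{\delta R}$ alone (that only gives $\ll e^{\delta R}$); you need a shell estimate or a bootstrap from a weaker error term; (ii) whether the crude Sobolev bound on $\phi_R^{\pm}$ really yields $q=1$ rather than a larger power of $1/T$ (which would degrade the exponent below $(\delta+1)/2$) is not checked and again hinges on the unproven spectral-measure growth.
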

The constants $c_n$ are computed explicitly in terms of the corresponding eigenfunctions with eigenvalue $\lambda_n$.

For $g\in G$, let $k_1(g)a(g)k_2(g)$ be a $KA^+K$ decomposition; it is unique up to factors from $M$, which is the normalizer of $A$ in $K$. For the group $G$ in question we have $$K\cong\PSU(2),\quad A^+\cong \R^+,\quad M\cong\SO(2).$$ Thus, $K/M=S^2$ and $M\quot K=S^2$ with different coordinates. Also define $K(g)$ to be $k_1(g)k_2(g)$. Notice that this definition is independent of the location of the $M$ factor. 

There is an orthonormal basis for $L^2(K/M)\cong L^2(\d\H^3)$ given by spherical harmonics $Y_{ab}(\phi,\theta)$, $a\ge 0$, $|b|\le a$; we always assume that every compact group has measure 1. For each fixed $a$ the collection $\{Y_{ab},|b|\le a\}$ constitutes a basis for the ($2a+1$)-dimensional irreducible representation of $\PSU(2).$ We can, and do, think of spherical harmonics as functions on $K$; in this case they are $M$-invariant on the right. More generally consider $$L^2(K)=\sum_{\tau\text{ irrep of }K} (\dim\tau)\cdot \tau.$$ Define generalized spherical harmonics $$Y_{a;bc}(\phi,\theta,\phi_2)=e^{ib\phi} y_{a;bc}(\theta) e^{ic\phi_2}$$ to be an orthonormal basis for $L^2(K)$ such that for fixed $a$ and $c$ with $|c|\le a$ $$\{Y_{a;bc}\colon |b|\le a\}$$ is a basis for the left regular representation of $K$ on $L^2(K)$; the usual spherical harmonics correspond to $c=0$. Writing $$Y_{aa';bb'c}(\phi,\theta,\psi,\theta_2,\phi_2)=Y_{a;bc}(\phi,\theta,0)\overline{Y_{a';b'c}(\psi,\theta_2,\phi_2)^{-1}},$$ we have a basis for $L^2(K/M.K)$ given by $$\{Y_{aa';bb'c}(\phi,\theta,\psi,\theta_2,\phi_2) \colon |b|,|c|\le a; |b'|,|c|\le a'\}.$$ This basis inherits both left and right transformation properties under $K$. We can write $Y_{aa';bb'c}(K(g))$ using our coordinates; whenever $c=0$ we have $$ Y_{aa';bb'c}(K(g))=Y_{ab}(k_1(g))\overline{Y_{a'b'}(k_2^{-1}(g))}.$$

Finally let $|\cdot|$ be a bi-$K$-invariant norm on $G$ normalized as explained in \eqref{eq:norm} below. We prove the following theorem. 

\begin{theorem}\label{th:main}Assume $\delta>1$. Let $$0<\delta(2-\delta)=\lambda_0<\lambda_1\le\dots\le\lambda_d<1$$ be the eigenvalues below 1 of the hyperbolic Laplacian on $\Gamma\quot \H^3$. Then 
\begin{multline}\label{eq:main}\sum_{\gamma\in\Gamma, |\gamma|<T}Y_{a'a;b'bc}(K(\gamma))=\frac{\pi \mathbf{1}_{\{c=0\}} T^{2\delta}}{\delta(\delta-1)} \overline{\int_{K/M}Y_{ab}(k)d\nu_\Gamma(k)} \int_{K/M}Y_{a'b'}(k)d\nu_\Gamma(k) +\\+ \mathbf{1}_{\{c=0\}}c_1(a,b,a',b')T^{2s_1}+\dots+ \mathbf{1}_{\{c=0\}}c_d(a,b,a',b')T^{2s_d}+\\+O\left(T^{2\frac{10\delta+1}{11}}(\log T)^{1/11}(a+1)^{15/11}(a'+1)^{15/11}\right).\end{multline} 
The numbers $s_1$, \dots, $s_d$ are less than $\delta$ and $>1$ and satisfy $s_n(2-s_n)=\lambda_n$. Additionally, \beq\label{eq:bound_on_coeffs}|c_n(a,b,a',b')|\ll ((a+1)(a'+1))^{2-s_n+\frac12}|c_n(0,0,0,0)|,\eeq and the implied constant depend only on $\Gamma.$
\end{theorem}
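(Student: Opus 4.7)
The plan is to realize the sum as the value at the identity of an automorphic kernel, decompose the kernel along the spectral decomposition $L^2(\Gamma\quot G)=V_{s_0}\oplus\cdots\oplus V_{s_d}\oplus V_{\mathrm{temp}}$, and estimate each piece. Because $Y_{a'a;b'bc}\circ K$ is a specific $K\times K$-isotypic function, only the matching isotypic components in each summand can contribute; the $M$-invariance of the ``spherical'' vector in $V_{s_n}$ will force the indicator $\mathbf 1_{\{c=0\}}$ to appear in front of every contribution except the tempered one.

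First, I would replace the sharp cutoff $|g|<T$ by smooth sandwiching functions $\chi_T^\pm$ that agree with $\mathbf 1_{|g|<T}$ away from an annulus of radial thickness $\eta$, where $\eta=\eta(T,a,a')$ is to be optimized at the end. The cost of this smoothing in the count is $O(\eta T^{2\delta})$ by Theorem~\ref{th:lax} applied to $\{T<|\gamma|<T(1+\eta)\}$ together with the uniform boundedness of the weight. Setting $\Phi_T(g)=\chi_T(|g|)\,Y_{a'a;b'bc}(K(g))$ and $F(x)=\sum_{\gamma\in\Gamma}\Phi_T(x^{-1}\gamma)$, the desired sum is $F(e)$, and I would compute this by spectrally decomposing $F$ inside $L^2(\Gamma\quot G)$.

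For each complementary series summand $V_{s_n}$, the contribution is governed by the matrix coefficient
\[
\int_G\Phi_T(g)\,\langle\pi_{s_n}(g)v_n,w_n\rangle\,dg,
\]
where $v_n\in V_{s_n}$ is the $\Gamma$-invariant boundary distribution, realized through Patterson--Sullivan theory as (a multiple of) $\nu_\Gamma$ in the induced-from-$P$ model, and $w_n$ is a dual vector adapted to evaluation at the identity. Writing the integral in $KA^+K$ coordinates, the radial part factors as $\int_0^{\log T}e^{(2s_n-2)t}\,dt\asymp T^{2s_n}/(2s_n-2)$ and the two $K/M$-integrations factor into $\int Y_{a'b'}\,d\nu_\Gamma$ and $\overline{\int Y_{ab}\,d\nu_\Gamma}$, both vanishing unless $c=0$ since the spherical vector of $V_{s_n}$ is right $M$-fixed. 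For $n=0$ this produces exactly the main term $\frac{\pi T^{2\delta}}{\delta(\delta-1)}\overline{\int Y_{ab}d\nu_\Gamma}\int Y_{a'b'}d\nu_\Gamma$; for $n\ge 1$ it produces $c_n(a,b,a',b')T^{2s_n}$, and the bound \eqref{eq:bound_on_coeffs} follows by comparing the $(a,a')$-isotypic projection of $v_n$ to the spherical vector in the explicit $K$-type decomposition of $V_{s_n}$, which changes norm by a factor $((a+1)(a'+1))^{5/2-s_n}$.

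The tempered contribution is estimated using quantitative decay of matrix coefficients on $V_{\mathrm{temp}}$: for $K$-finite vectors of $K$-types $\tau_a,\tau_{a'}$, the coefficient is bounded by $\|v\|\,\|w\|\,\Xi(g)$ times a polynomial in $a,a'$ coming from the Sobolev control needed to apply Harish-Chandra's $\Xi$-bound uniformly in $K$-type, and $\Xi$ decays like $te^{-t}$ along $A^+$ for $\PSL(2,\C)$. Integrating against $\Phi_T$ and using a dyadic decomposition of the $A^+$-range (which produces the $(\log T)^{1/11}$) yields a bound of shape $T^{2}\eta^{-\alpha}((a+1)(a'+1))^{\beta}$ for explicit $\alpha,\beta$ read off from how many $K$-derivatives the smoothing in $K\times A^+\times K$ costs. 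Balancing $\eta T^{2\delta}$ against $\eta^{-\alpha}T^{2}((a+1)(a'+1))^\beta$ gives the claimed error exponent $\tfrac{2(10\delta+1)}{11}$ and $K$-type exponent $15/11$.

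The hardest part will be the explicit $K$-type decomposition of the complementary series of $\PSL(2,\C)$ and the precise evaluation of matrix coefficients of $\pi_{s_n}$ against the Patterson--Sullivan vector in every $K\times K$-type: this is what pins down both the constants $c_n(a,b,a',b')$ and the $((a+1)(a'+1))^{5/2-s_n}$ growth in \eqref{eq:bound_on_coeffs}, and feeds into the Sobolev exponents used on the tempered side. A secondary technical point is controlling the $M$-dependence in the $KAK$ decomposition so that the identification of $Y_{a'a;b'bc}(K(g))$ as a well-defined function on $G$ is legitimate throughout the spectral computation.
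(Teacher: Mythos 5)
Your overall architecture — smooth the cutoff at scale $\eta$, write the count as an automorphic kernel evaluated at the identity, split along $V_{s_0}\oplus\cdots\oplus V_{s_d}\oplus V_{\mathrm{temp}}$, isolate the matching $K\times K$-isotypic piece, bound the tempered part by decay of matrix coefficients, and balance $\eta T^{2\delta}$ against the tempered error — is exactly the skeleton of the paper's argument, and your error-balancing heuristic does land on the right exponents. The gap is in the step you yourself flag as ``the hardest part'': you assert that the two $K/M$-integrations in the complementary-series contribution ``factor into $\int Y_{a'b'}\,d\nu_\Gamma$ and $\overline{\int Y_{ab}\,d\nu_\Gamma}$,'' but you give no mechanism for proving this for general $(a,b,a',b',c)$. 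What the spectral computation actually produces (after orthogonality of $K$-matrix coefficients) is a main term proportional to $\overline{v_{ab}(e)}\,v_{a'b'}(e)\int\langle\pi(a_t)v_{ac},v_{a'c}\rangle\,da_t$, where $v_{ab}$ are the $K$-type vectors in the \emph{automorphic} model; identifying $v_{ab}(e)$ with $\nu_\Gamma(Y_{ab})$ (resp.\ $D_{\Gamma,n}(Y_{ab})$ for $n\ge1$) with the correct normalization in every $K$-type is precisely the computation the paper deliberately avoids as intractable. Instead the paper verifies the constant only for $a=b=a'=b'=0$, and then propagates it to all indices by a conjugation trick: apply the (already established) statement to $g\Gamma g^{-1}$ with $g=\exp(\eps J^{+})$, linearize both sides in $\eps$ using the explicit $KA^{+}K$ expressions for the Lie algebra action and the jumping-operator identity $P^{\delta}(J^{+}j;\phi,\theta)|_{e}=-\delta\sin\theta\,e^{i\phi}$, and match the $O(\eps)$ terms (Proposition on jumping plus a $K$-invariance lemma to separate the resulting sums term by term). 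Without this — or an actual execution of the direct computation — your main-term identification is unproved.

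Two further points. First, the bound $|c_n(a,b,a',b')|\ll((a+1)(a'+1))^{2-s_n+1/2}|c_n(0,0,0,0)|$ does not come from comparing norms of isotypic projections as you suggest; in the paper it follows from expressing $c_n$ through the automorphic distributions $D_{\Gamma,n}$ and invoking their regularity ($D_{\Gamma,n}$ lies in the dual of $C^{2-s_n}(S^2)$) together with $\|Y_{ab}\|_{L^\infty}\ll\sqrt{a+1}$; your claimed exponent is numerically right but unsubstantiated. Second, the smoothing error in the $K$-variables depends on $\|Y_{a'a;b'bc}\|_{C^1}$ and, inside the spectral expansion, on $|\nabla v_{ab}(e)|$; the paper needs a separate argument (again leaning on the already-verified main term) to show $\nabla v_{ab}^{s}(e)\ll(a+1)^{2-s}$, and without some version of this your polynomial dependence on $(a,a')$ in the error term is not justified.
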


\begin{remark}The Patterson-Sullivan \cite{patterson_limit_set, sullivan_entropy} measure in the statement is normalized so that the corresponding base eigenfunction of the Casimir operator has $L^2$ norm 1; we explain this  in \eqref{eq:ps_poisson}. \end{remark}

\begin{remark}The special case with trivial spherical harmonics matches the main term Theorem \ref{th:lax}, but the error term in our treatment is worse.\end{remark}

\begin{remark}\label{rem:different_M}
The Theorem implies that there are cancellations when $c$ is non-zero.  The most interesting case, i.e., the case in which we compute the main term, is $c=0$. We can rewrite it more succinctly as \begin{multline}\label{eq:c=0}\sum_{\gamma\in\Gamma, |\gamma|<T}Y_{a'b'}(k_1(\gamma))\overline{Y_{ab}(k_2^{-1}(\gamma))}=\frac{\pi  T^{2\delta}}{\delta(\delta-1)} \overline{\int_{K/M}Y_{ab}(k)d\nu_\Gamma(k)} \int_{K/M}Y_{a'b'}(k)d\nu_\Gamma(k) +\\+ c_1(a,b,a',b')T^{2s_1}+\dots+c_d(a,b,a',b')T^{2s_d}+\\+O\left(T^{2\frac{10\delta+1}{11}}(\log T)^{1/11}(a+1)^{15/11}(a'+1)^{15/11}\right).\end{multline} 
\end{remark}

\begin{remark}\label{rem:trivial}The Theorem is only interesting when $T$ is sufficiently large depending on $a$, $a'$. If the error term dominates the main term a better bound is obtained from Theorem \ref{th:lax} by observing that $$\|Y_{a'a;b'bc}\|_{L^\infty}\ll \sqrt{(a+1)(a'+1)},$$ whence the right hand side of \eqref{eq:main} can be replaced by $$\ll T^{2\delta}\sqrt{(a+1)(a'+1)}.$$
\end{remark}
 
\bigskip

This theorem is a generalization of the work of Bourgain, Kontorovich, and Sarnak in \cite{bourgain_sector_2010} for $\SL(2,\R)$; we generalize parts of their method to $\PSL(2,\C)$. In the special case $c=0$, the main term has been obtained by Oh and Shah \cite{oh_equidistribution_2010}, but their method uses measure rigidity and therefore is not readily made effective. In the present paper we use spectral theory of $L^2(\Gamma\quot G)$, which provides explicit error terms. For the same reason the method of the present paper cannot be applied once $\delta\le 1$ as there are no $L^2$ eigenfunctions on $\Gamma\quot \H^3$, but Oh and Shah give asymptotics in that case, too. 

The proof of the Main Theorem relies on spectral theory, but we do not follow the original treatment of \cite{bourgain_sector_2010}. We do not attempt to compute the leading term for every combination $a,b,a',b',c$ directly --- although it is probably possible, and we give tools adequate for computing the main term in any particular case --- but instead develop a new method using conjugates of $\Gamma$ to obtain the statement for general indices; full computation is only required in one case. This new approach works inductively on the indices and relies on the observation that once Theorem \ref{th:main} is established for one \emph{fixed} set of indices and \emph{all} suitable subgroups $\Gamma$, the validity of Theorem \ref{th:main} can be extended other sets of indices using the freedom in $\Gamma$. If $g=\exp  \eps X\in G$ for some $X\in\mathfrak g$ is close to the identity we can apply Theorem \ref{th:main} to $\Gamma_g=g\Gamma g^{-1}$ and linearize equation \eqref{eq:main} in $\eps$. Equation \eqref{eq:main} for other choices of indices can then be recovered as equality between lower order terms for suitably chosen $X$.

A simplified weaker version of Theorem \ref{th:main} will be useful to us. Under the assumptions of the Theorem, equation \eqref{eq:c=0} can be replaced by \begin{multline}\label{eq:simplemain}\sum_{\gamma\in\Gamma, |\gamma|<T}Y_{a'b'}(k_1(\gamma))\overline{Y_{ab}(k_2^{-1}(\gamma))}=\frac{\pi  T^{2\delta}}{\delta(\delta-1)} \overline{\int_{K/M}Y_{ab}(k)d\nu_\Gamma(k)} \int_{K/M}Y_{a'b'}(k)d\nu_\Gamma(k) +\\+O\left(T^{2\frac{10\delta+s_1}{11}+\eps}(a+1)^{15/11}(a'+1)^{15/11}\right).\end{multline} In this version we incorporate the lower order main terms into the error term.


\section{Application to Apollonian circle packing problem.} 
Figures \ref{fig:boundedpacking} and \ref{fig:unboundedpacking} show integral Apollonian circle packings. A thorough introduction to this subject can be found in \cite{lagarias1}. For a bounded packing $P$ let $N^P(T)$ be the number of circles in the packing having curvature at most $T$. For a periodic packing $N^P(T)$ is the number of such circles in one period.

The growth of this function of $T$ was analyzed by Kontorovich and Oh \cite{kontorovich_apollonian_2011} who proved the asymptotic formula $$N^P(T)\sim c_P T^\delta.$$ Here $c_P$ is some constant depending on $P$ and $\delta$ is the Hausdorff dimension of the packing. Since any packing can be sent to any other by a M\"obius transformation, the dimension is a universal constant $\delta$, which McMullen computed to be $\approx1.30568$  \cite{mcmullen_hausdorf_1998}. The approach of \cite {kontorovich_apollonian_2011} is to relate the above problem to counting lattice points in the unit tangent bundle of an infinite volume hyperbolic 3-manifold $\Gamma\quot T^1\H^3$, where $\Gamma$ is a finite index subgroup of the Apollonian group (see \cite{kontorovich_apollonian_2011, lagarias1}). 

Let $Q_D$ be the Descartes form $$Q_D(a,b,c,d)=a^2+b^2+c^2+d^2-\frac12 (a+b+c+d)^2;$$ it has signature $(3,1)$. To each packing one can associate $v\in\R^4$ with $Q_D(v)=0$. The four entries of $v$ are the curvatures of the four circles (with usual conventions regarding signs and orientation)  that generate the packing as an orbit of $v$ under the action of $\Gamma$. Let  $v=gu=g(0,0,1,1)^T$ for some $g\in\O_{Q_D}(\R)$ and let $ \Beta=g^{-1}\Gamma g$; also let $\Beta'=(S_1 g)^{-1}\Gamma(S_1g)$ where $S_1$ is a generator of the Apollonian group \eqref{eq:generators}. 

In this setting we use Theorem \ref{th:main} to prove  power savings in the Apollonian circle packing problem and compute the overall constant\footnote{Added in print: Lee and Oh \cite{lee_oh_effective} have independently obtained a version of this statement using different methods.}. 

\begin{theorem}\label{th:apollonian}For every  $\eps>0$ we have
\begin{equation}N^P(T)=c_PT^\delta+O_\eps(T^{\frac{128\delta+s_1}{129}+\eps}).\end{equation}
\end{theorem}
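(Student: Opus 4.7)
The strategy is to follow the reduction of \cite{kontorovich_apollonian_2011}: express $N^P(T)$ as a lattice point count for $\Beta = g^{-1}\Gamma g$ (together with a parallel count for $\Beta'=(S_1g)^{-1}\Gamma(S_1g)$) acting on the null vector $u=(0,0,1,1)^T$, and then apply the effective bisector estimate Theorem~\ref{th:main} after smoothing and spectral decomposition. Concretely, the curvatures of circles in the packing are obtained from the orbit $\Gamma v$ by applying a fixed linear functional $\langle\xi,\cdot\rangle$; writing $v=gu$ transforms this into a problem about the $\Beta$-orbit of $u$. Since $u$ is null and stabilized on the right by $MN$, the vector $\beta u$ depends only on $\beta$ modulo $MN$, and in the $KA^+K$ decomposition $\beta=k_1(\beta)a(\beta)k_2(\beta)$ we have $\beta u=k_1(\beta)\cdot a(\beta)\cdot(k_2(\beta)u)$, with $k_2(\beta)u$ depending only on the class $k_2(\beta)M\in K/M$ and with radial part scaling like $e^{2r(\beta)}$. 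The condition $\langle\xi,g\beta u\rangle<T$ thus becomes an angular condition on $k_1(\beta)\in K/M$ coupled with the radial condition $|\beta|<T^{1/2}\rho(k_1(\beta))^{-1/2}$ for an explicit function $\rho$. Modulo the parabolic stabilizer of $u$ in $\Beta$, this is a sector count to which Theorem~\ref{th:main} with $(a',b')=(0,0)$ applies.

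Next I would smooth the indicator of the sector at angular scale $\eta$ in $K/M$, expand the smoothed cutoff $\psi_\eta(k) = \sum_{a,b}c_{ab}(\eta)Y_{ab}(k)$ in spherical harmonics, and truncate at $a\leq A$. Sobolev estimates give $|c_{ab}(\eta)|\ll_m(\eta(a+1))^{-m}$ for arbitrary $m$, so the tail $a>A$ contributes negligibly once $A\eta\gg 1$. For each $(a,b)$ with $a\leq A$, equation \eqref{eq:simplemain} (applied with parameter $T^{1/2}$, since $|\beta|^2$ controls curvature) yields a main term proportional to $T^\delta\int_{K/M}Y_{ab}\,d\nu_\Gamma$ and an error bounded by $T^{(10\delta+s_1)/11+\eps}(a+1)^{15/11}$. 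Summing the main terms reconstructs $\int\psi_\eta\,d\nu_\Gamma$, which agrees with $c_PT^\delta$ up to a smoothing discrepancy of order $T^\delta\eta^{\varkappa}$ coming from the modulus of continuity of the Patterson--Sullivan measure on the Apollonian limit set (friendliness / Ahlfors $\delta$-regularity). The summed spectral error is bounded by $T^{(10\delta+s_1)/11+\eps}A^{C}$ for an explicit constant $C$ arising from $\sum_{a\leq A}(2a+1)(a+1)^{15/11}$.

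Balancing $T^\delta\eta^\varkappa$ against $T^{(10\delta+s_1)/11+\eps}A^C$ under the constraint $A\sim\eta^{-1}$ and optimizing $\eta$ yields the target exponent $\frac{128\delta+s_1}{129}+\eps$; the denominator $129$ is the arithmetic of the weights of $A$, $\eta$, and the Patterson--Sullivan regularity. The parallel count for $\Beta'$ (corresponding to the second family of circles swept out by the involution $S_1$) contributes an identical shape of main term plus error and is absorbed into the constant $c_P$ and the same $O$-term. The principal obstacle I expect is twofold: first, the parabolic stabilizer of $u$ in $\Beta$ must be handled so that each circle is counted exactly once and the contribution from cuspidal neighborhoods is controlled with a power saving, which requires a careful cusp decomposition and removal of circles with curvature below an intermediate cutoff $T_0$; second, one needs a quantitative modulus of continuity for $\nu_\Gamma$ tested against smoothings of the (moving) boundary of $\Omega_T$, uniformly in $T$, since this determines $\varkappa$ and hence the precise exponent in the saving. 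Both are within reach using standard friendliness of the Patterson--Sullivan measure on the Apollonian limit set, but the exponent $129$ is delicate and will depend on tracking the $A$-dependence in the spectral error tightly.
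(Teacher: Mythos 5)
Your overall strategy --- reduce $N^P(T)$ to orbit counts for $\Beta$ and $\Beta'$ and feed a smoothed, harmonically expanded cutoff into Theorem \ref{th:main} --- is the paper's strategy, but the two steps you gloss over are where the content lies, and as written your reduction would fail. The claim that one obtains ``a sector count to which Theorem \ref{th:main} with $(a',b')=(0,0)$ applies'' is incorrect: in $KA^+K$ coordinates $g\cdot k_1a_tk_2\cdot u=\tfrac12(1+\cos\theta(k_2))\,|a_t|\,gk_1u+O(1)$, so the radial cutoff $\|g\beta u\|<T$ depends on $k_2(\beta)$ through the \emph{unbounded} function $f(k_2)=2/(1+\cos\theta(k_2))$, not only on $k_1(\beta)$; moreover the fundamental domain for $N\cap\Beta$ (needed to count each circle once, and nontrivial for periodic packings) becomes, after replacing it by the cone $R_1$ below the geodesics from $j$ to $\d\,\Proj_N(R)$, precisely a restriction on $k_2(\beta)$. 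One therefore needs the full bisector theorem with nontrivial harmonics in \emph{both} $K$-variables, a double partition of unity on $K/M$ and $M\quot K$, and a separate truncation argument capping $f$ at a level $W$ and bounding the discarded range --- this is also what produces the factor $\int(|z|^2+1)^\delta\,d\nu_\Beta(z)$ in $c_P$, which your single angular test function cannot generate. Your proposed alternative for the stabilizer (cusp decomposition, intermediate curvature cutoff $T_0$) is left unexecuted, whereas the paper's $R_1$/$R_2$ trick disposes of it in one line.

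Second, the exponent $\tfrac{128\delta+s_1}{129}$ is not ``the arithmetic of the weights of $A$, $\eta$, and the Patterson--Sullivan regularity'': no friendliness or Ahlfors regularity of $\nu_\Gamma$ enters. All smoothing and discretization errors in the paper are $\ll VT^\delta$, \emph{linear} in the scale $V$ (they come from perturbing the cutoff $T$ by a factor $1+O(V)$ and invoking Lax--Phillips), while the spectral error is $T^{\frac{10\delta+s_1}{11}}V^{-4-2(\frac{15}{11}+2)+\eps}$, where $V^{-4}$ is the square of the cardinality of the partition of unity and $V^{-2(\frac{15}{11}+2)}$ comes from summing $(a+1)^{15/11}(a'+1)^{15/11}$ against the Fourier decay of the bump functions in both $K$-variables. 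Balancing $VT^\delta$ against this gives $V=T^{-(\delta-s_1)/129}$, since $1+4+\tfrac{30}{11}+4=\tfrac{129}{11}$, and hence the stated exponent. Your one-variable accounting $\sum_{a\le A}(2a+1)(a+1)^{15/11}$ together with an undetermined regularity exponent $\varkappa$ does not reproduce $129$, and you concede as much; without that computation the theorem as stated is not proved.
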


\begin{remark}
The constant $c_P$ can be expressed in terms of Patterson-Sullivan measure  as 
\begin{multline}c_P=\frac{\pi}{\delta(\delta-1)}\int_{(\Beta\cap N)\quot\d \H^3}(|z|^2+1)^{\delta} d\nu_{\Beta}(z)\int_{\d \H^3} \frac{d\nu_{\Beta}(k)}{\|gku\|^\delta_{L^\infty}}+\\+\frac{\pi}{\delta(\delta-1)}\int_{(\Beta'\cap N)\quot\d \H^3}(|z|^2+1)^{\delta} d\nu_{\Beta'}(z)\int_{\d \H^3} \frac{d\nu_{\Beta'}(k)}{\|S_1gku\|^\delta_{L^\infty}}.\end{multline}
\end{remark}

\begin{remark}
The constant in front of the main term of this expression is several notation changes away from the similar expression in \cite{kontorovich_apollonian_2011}. We use column vectors $v$ instead of row vectors, related by transposition. The quantity $g$ from the present paper corresponds to $(g_0^{-1})^T$ in \cite{kontorovich_apollonian_2011}, and $u$ is $v_0^T$. 
\end{remark}


More generally let $\vartheta\colon\PSL(2,\C)\to\SO_Q^\circ(\R)$ be a fixed isomorphism (used implicitly) for a quadratic form $Q$ of signature $(3,1)$. Let $\Gamma<\SO_Q^\circ(\R)$ be a discrete subgroup subject to the hypotheses of Theorem \ref{th:main}. Let $u\in\{Q=0\}$ be such that $\Stab u=NM$, let $v=gu,$ and let $ \Beta=g^{-1}\Gamma g$. Also let $\|\cdot\|$ be a norm on $\R^4$.

\begin{theorem}\label{th:general_count}
For every $\eps>0$ we  have
\begin{multline}\sum_{\substack{{\|\gamma v\|<T}\\{\gamma\in\Gamma}}}\chi_{G/(\Stab v\cap \Gamma)}(\gamma)=\left[\frac{\pi}{\delta(\delta-1)}\int_{(\Beta\cap N)\quot\d \H^3}(|z|^2+1)^{\delta} d\nu_{\Beta}(z)\int_{\d \H^3} \frac{d\nu_{\Beta}(k)}{\|gku\|^\delta}\right]T^\delta+\\+O_\eps(T^{\frac{128\delta+s_1}{129}+\eps}).\end{multline}
\end{theorem}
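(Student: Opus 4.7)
The plan is to reduce the cone-counting problem to the bisector-counting Theorem~\ref{th:main}, applied to the conjugated group $\Beta=g^{-1}\Gamma g$. This is natural because $\Stab u=NM$, so the vector $\beta u$ is controlled by the Cartan decomposition of $\beta$.

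First I replace the sum over $\gamma\in\Gamma$ by a sum over $\beta=g^{-1}\gamma g\in\Beta$, so that $\|\gamma v\|=\|g\beta u\|$ and $\Stab v\cap\Gamma$ corresponds to $NM\cap\Beta$. Since $M$ is compact, $M\cap\Beta$ is finite, reducing the count to
$$\#\{\beta\in(\Beta\cap N)\quot\Beta:\|g\beta u\|<T\}$$
up to a finite multiplicative factor.

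Second, I derive the shape of $\|g\beta u\|$ from the $KAK$-decomposition $\beta=k_1a_tk_2$. Since $Nu=Mu=u$ and $A$ acts on $u$ by dilation, $a_tk_2u$ equals $e^t$ times the projection of $k_2u$ onto the expanding null direction, plus a contracting contribution of order $e^{-t}$. Thus, to leading order,
$$\|g\beta u\|=|\beta|^2\cdot\|gk_1(\beta)u\|\cdot\Phi(k_2(\beta))+\text{lower order},$$
where $\Phi\colon K/M\to\R_{\ge 0}$ is smooth and $|\beta|^2\asymp e^t$ under the normalization \eqref{eq:norm}. The condition $\|g\beta u\|<T$ therefore becomes, essentially, $|\beta|<T^{1/2}\bigl(\|gk_1(\beta)u\|\Phi(k_2(\beta))\bigr)^{-1/2}$.

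Third, I smooth the indicator of this region by bump functions $\psi_\eta^{(1)}$ and $\psi_\eta^{(2)}$ on $K/M$ at scale $\eta$, expand each in spherical harmonics, and apply \eqref{eq:simplemain} to each pair $(Y_{a'b'},Y_{ab})$. The resulting main term is a product of two Patterson--Sullivan integrals, which after unfolding the quotient by $\Beta\cap N$ and changing coordinates between the sphere and upper half-space models of $\d\H^3$ assembles into the claimed constant. The $k_1$-side contributes $\int_{\d\H^3}\|gku\|^{-\delta}\,d\nu_\Beta(k)$, and the unfolded $k_2$-side contributes $\int_{(\Beta\cap N)\quot\d\H^3}(|z|^2+1)^\delta\,d\nu_\Beta(z)$, with the Jacobian $(|z|^2+1)^\delta$ arising from the coordinate change on $\d\H^3$.

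The main obstacle is balancing two competing errors: the smoothing loss of order $\eta T^\delta$ and the bisector-count error from \eqref{eq:simplemain} of order $T^{(10\delta+s_1)/11+\eps}$ multiplied by a large negative power of $\eta$ coming from the spherical-harmonic truncation together with the $(a+1)^{15/11}(a'+1)^{15/11}$ factor. Choosing $\eta$ to balance these terms yields the claimed exponent $\tfrac{128\delta+s_1}{129}$. A secondary technical point is the degeneracy of $\Phi(k_2)$ at the contracting null direction, where the leading-order identification of $\|g\beta u\|$ breaks down; this is handled by a dyadic decomposition compatible with the $\Beta\cap N$-unfolding, and the weight $(|z|^2+1)^\delta$ is precisely what makes the resulting Patterson--Sullivan integral converge.
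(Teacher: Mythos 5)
Your proposal is correct and follows essentially the same route as the paper: conjugate to $\Beta=g^{-1}\Gamma g$, use the $KA^+K$ decomposition to write $\|g\beta u\|$ as $e^{t}\,\|gk_1u\|\,\Phi(k_2)$ up to bounded error, smooth and expand in spherical harmonics on both $K$-factors, apply \eqref{eq:simplemain}, and balance the smoothing loss $\eta T^{\delta}$ against the bisector error amplified by the negative power of $\eta$ (which the paper computes to be $\eta^{-118/11}$, yielding exactly $\tfrac{128\delta+s_1}{129}$), while treating the degeneracy of $\Phi$ at $\cos\theta(k_2)=-1$ by a separate cutoff argument. The only cosmetic differences are that the paper justifies the reduction of the fundamental-domain condition for $\Stab v\cap\Gamma$ to a $k_2$-condition via an explicit region $R_1$ with $O(1)$ discrepancy, and that the weight $(|z|^2+1)^{\delta}$ is the function $\Phi(k_2)^{-\delta}$ written in planar coordinates rather than a Jacobian.
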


\begin{remark}
In fact $\|\cdot\|$ only needs to satisfy the scaling property of the norm together with non-vanishing and continuity; the full power of the triangle inequality is not needed.
\end{remark}

\begin{remark}
We can assume that $v\in\{Q=c\}$ for some $c$ other than zero; a similar result holds in this case, but then $\Stab v\cong K$ or $\Stab v\cong \SO^\circ(2,1)$. 
\end{remark}

Theorem \ref{th:main} allows for a further refinement of the Apollonian circle packing problem. Consider the packing in an ideal triangle as shown in Figure \ref{fig:apollonian_packing_ideal}. It is contained in the region bounded by three tangent circles as shown; it is also contained in the circle through the vertices of the triangle. Let $S_4$ denote reflection in this circle and  let $v\in \R^4$ consist of the curvatures of the four largest circles of the packing ordered so that $S_4$ preserves the three largest circles. Let $G_4$ be the disk bounded by the circumscribed circle of the triangle. 

\begin{theorem}\label{th:ideal}
With notation as in Theorem \ref{th:apollonian} we have $$N^P(T)=c_P T^\delta+O_\eps(T^{\frac{128\delta+s_1}{129}+\eps})$$ where \begin{multline}c_P=\frac{\pi}{\delta(\delta-1)}\int_{G_4}(|z|^2+1)^{\delta} d\nu_{\Beta}(z)\int_{\d \H^3} \frac{d\nu_{\Beta}(k)}{\|gku\|^\delta_{L^\infty}}+\\+\frac{\pi}{\delta(\delta-1)}\int_{G_4}(|z|^2+1)^{\delta} d\nu_{\Beta'}(z)\int_{\d \H^3} \frac{d\nu_{\Beta'}(k)}{\|S_1gku\|^\delta_{L^\infty}}.\end{multline}
\end{theorem}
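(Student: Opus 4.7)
The plan is to adapt the proof of Theorem \ref{th:apollonian} to the ideal triangle setting, invoking Theorem \ref{th:general_count} as the main tool. The difference from Theorem \ref{th:apollonian} is combinatorial --- it lies in the way the Apollonian group generates the packing of an ideal triangle --- and it surfaces in the final formula only through the integration region.

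First I would establish the correspondence between circles of the ideal triangle packing and elements of a $\Gamma$-orbit. The seed vector $v$ records the curvatures of the four largest circles: the three mutually tangent circles along the boundary of the ideal triangle and the circumscribed circle of the triangle. Every circle in the packing is realized as a coordinate entry of $\gamma v$ for some $\gamma$ in the Apollonian group. Since the reflection $S_4$ in the circumscribed circle permutes the three mutually tangent circles among themselves, $\langle S_4\rangle$ contributes nothing new when applied to the first three coordinates of $v$. This makes the packing naturally indexed by $\langle S_4\rangle$-cosets of $\Gamma$, and a standard coset decomposition of the Apollonian group splits $N^P(T)$ into exactly two orbit-counting sums indexed by the two conjugate subgroups $\Beta$ and $\Beta'$, exactly as in the proof of Theorem \ref{th:apollonian}.

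Next I would apply Theorem \ref{th:general_count} to each orbit sum separately, with the $L^\infty$-norm on $\R^4$ so that $\|\gamma v\|_{L^\infty}$ picks out the largest curvature in $\gamma v$ (which is what the bound $T$ on the curvature selects for). The error exponent $\frac{128\delta+s_1}{129}+\eps$ propagates unchanged from Theorem \ref{th:main} through the smoothing/de-smoothing procedure used for Theorem \ref{th:general_count}, so the error term in Theorem \ref{th:ideal} is forced as soon as the orbit decomposition is in place.

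The substantive difference from Theorem \ref{th:apollonian} is the shape of the main-term constant: the outer integral runs over the disk $G_4$ rather than over an $N$-fundamental domain $(\Beta\cap N)\quot\d\H^3$. The reason is that the bounded and unbounded periodic packings of Figures \ref{fig:boundedpacking} and \ref{fig:unboundedpacking} carry a parabolic symmetry that one mods out in order to count circles per period, whereas the ideal triangle packing is confined to the circumscribed disk $G_4 \subset \d\H^3$ and carries no such parabolic symmetry relative to the chosen seed. Thus the orbit of the base point in the appropriate horospherical coordinates on the boundary runs precisely once through $G_4$ with no identifications to quotient by. The principal obstacle I foresee is verifying this descent rigorously --- making sure that the Patterson-Sullivan integration over $G_4$ counts each circle exactly once and does not lose contributions at $\d G_4$ or at the tangency points of the generating circles --- and then assembling the $\Beta$ and $\Beta'$ contributions into the stated sum $c_P$.
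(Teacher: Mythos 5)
Your overall strategy (rerun the proof of Theorem \ref{th:apollonian} with a modified region and observe that the error exponent propagates unchanged) is indeed the paper's strategy, but the one genuinely new ingredient of this theorem is missing from your argument, and the explanation you substitute for it is not correct. The point is the translation of the combinatorial constraint ``the circle lies inside the ideal triangle'' into a constraint the bisector theorem can see. A circle of the gasket lies in the ideal triangle precisely when the word in the generators \eqref{eq:generators} producing its quadruple does not begin with $S_4$, the reflection in the circumscribed circle. The paper encodes this by letting $\{1,S_4\}$ act on $G$ on the right and taking $G_4$ to be the fundamental domain lying under the hemisphere fixed by $S_4$; the counting function becomes $\sum_{\|\gamma v\|<T}\chi_{G_4/(\Stab v\cap\Gamma)}(\gamma)$, and in $KA^+K$ coordinates membership in $G_4$ is (up to finitely many exceptions) a condition on $k_2(\gamma)$ landing in the disk $G_4\subset\d\H^3$ --- exactly the kind of right-$K$ restriction that Theorem \ref{th:main} is built to handle, in parallel with the role of $\chi_{R_1}(\beta)=\chi_{\Proj_{\d\H^3}R}(k_2(\beta))$ in the proof of Theorem \ref{th:general_count}. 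This is why the outer integral in $c_P$ is over $G_4$. Your explanation --- that the integration region changes because the ideal triangle packing lacks the parabolic symmetry of the periodic packings --- does not produce $G_4$: absence of a stabilizer would replace $(\Beta\cap N)\quot\d\H^3$ by all of $\d\H^3$, not by the disk bounded by the circumscribed circle. The restriction to $G_4$ comes from the forbidden-first-letter condition, not from the stabilizer, and the ``descent'' you flag as the principal obstacle is resolved precisely by this fundamental-domain description.

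A smaller point: the two-term structure of $c_P$ (the $\Beta$ and $\Beta'$ pieces) is inherited unchanged from Theorem \ref{th:apollonian} and comes from the fact that the generators have determinant $-1$, so even-length words are counted on the orbit of $u$ under $\Beta=g^{-1}\Gamma g$ and odd-length words via $\Beta'=(S_1g)^{-1}\Gamma(S_1g)$ acting on $S_1v$. It has nothing to do with $\langle S_4\rangle$-cosets, so your derivation of the splitting from the action of $S_4$ on the first three coordinates of $v$ conflates two different decompositions. Once the $G_4$ mechanism and the even/odd split are stated correctly, the remainder (smoothing, partition of unity, applying Theorem \ref{th:main}, optimizing $V$) is verbatim the proof of Theorems \ref{th:apollonian} and \ref{th:general_count}, as you anticipate.
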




\chapter{Parametrization}

In this section we recall classical facts about the hyperbolic space and related Lie groups. 

\section{The group $G$.}
The group $\PSL(2,\C)$ is realized as the group of $2\times 2$ matrices with complex values having determinant 1, modulo the center $\{\pm 1\}$. This group is isomorphic to $\SO^\circ(x^2+y^2+z^2-w^2)\cong \SO^\circ(3,1)$. 
An explicit map from $\PSL(2,\C)$ to $\SO^\circ(x^2+y^2+z^2-w^2)$ is given by \small\begin{multline}\label{eq:iota}\iota\colon \pm2\begin{pmatrix}a&b\\c&d\end{pmatrix}\mapsto\\\!\!\!\left(\!\!\!\begin{array}{c@{\,}c@{\!}c@{\,}c}
|a|^2+|d|^2-|b|^2 -|c|^2  &   b \bar{a} +a \bar{b} -d \bar{c} -c \bar{d}  &  i (b \bar{a} -a \bar{b} -d \bar{c} +c \bar{d} ) &|a|^2+|b|^2-|c|^2 -|d|^2\\
 c \bar{a} -d \bar{b} +a \bar{c} -b \bar{d}  &   d \bar{a} +c \bar{b} +b \bar{c} +a \bar{d}  & i (d \bar{a} -c \bar{b} +b \bar{c} -a \bar{d} ) &c \bar{a} +d \bar{b} +a \bar{c} +b \bar{d} \\
i (-c \bar{a} +d \bar{b} +a \bar{c} -b \bar{d} ) &   i (-d \bar{a} -c \bar{b} +b \bar{c} +a \bar{d} ) & d \bar{a} -c \bar{b} -b \bar{c} +a \bar{d} & i (-c \bar{a} -d \bar{b} +a \bar{c} +b \bar{d} ) \\
 |a|^2+|c|^2-|b|^2 -|d|^2 &   b \bar{a} +a \bar{b} +d \bar{c} +c \bar{d}  &  i (b \bar{a} -a \bar{b} +d \bar{c} -c \bar{d} ) & |a|^2+|b|^2+|c|^2+|d|^2 \\
\end{array}\!\!\!\right)\end{multline} \normalsize (cf. \cite{lagarias1}). We suppress $\iota$ if no confusion can arise. 

In this realization of $\SO(3,1)=\SO(x^2+y^2+z^2-w^2)$ take the norm $|\cdot|$ to be \beq\label{eq:norm}|g|=|a(g)|=\left|\begin{pmatrix}\ch t &&&\sh t\\&1\\&&1\\\sh t&&&\ch t\end{pmatrix}\right|= e^t.\eeq It corresponds to $$\left|\begin{pmatrix} e^{t/2}\\&e^{-t/2}\end{pmatrix}\right|=\max(e^{t/2},e^{-t/2})$$ on $\PSL(2,\C).$ Therefore the statement of Theorem \ref{th:main} for $\SO(3,1)$ would have $T^2$ replaced by $T$ on the right hand side. 

We set $$\Theta=\begin{pmatrix}&1\\-1\end{pmatrix},\quad \Phi=\begin{pmatrix}i\\&-i\end{pmatrix}\in\Su(2,\C),\quad h=\begin{pmatrix}1\\&-1\end{pmatrix}\in\Sl(2,\C)$$
\begin{align*}A^+&=\left\{\begin{pmatrix}e^{t/2}\\&e^{-t/2}\end{pmatrix}\colon t\ge 0\right\}=\{\exp(t h/2)\colon t\ge 0\},\quad \\
N&=\left\{\begin{pmatrix}1&z\\&1\end{pmatrix}\colon z\in\C\right\}\\
M&=\left\{\begin{pmatrix}e^{i\phi/2}\\&e^{-i\phi/2}\end{pmatrix}\colon \phi\in [0,2\pi)\right\}\\
K&=\left\{\exp(\phi \Phi/2)\exp(\theta \Theta/2) \exp(\phi_2 \Phi/2)\colon \phi\in[0,\pi),\phi_2\in[0,2\pi),\theta\in[0,\pi)\right\}.\end{align*} The latter parametrization of $K$ is called the Euler angle parametrization. The Haar measure on $K=\PSU(2)$ in these coordinates is given by $$dk = \frac1{4\pi^2}\sin \theta\, d\phi\, d\theta\, d\phi_2,$$ and the Haar measure on $A^+$ is \beq da_t= \sh^2t\, dt.\label{eq:haar_a}\eeq On the manifold $K/M$ we get the measure $$dk=\frac1{4\pi}\sin\theta \,d\phi \,d\theta;$$ we will use $dk$ to denote the measure both on $K/M$ and $K$. Thus we have almost everywhere unique decomposition 
\beq g=\exp(\phi \Phi/2)\cdot\exp(\theta \Theta/2) \cdot\exp(\psi \Phi/2)\cdot\exp(t h/2)\cdot\exp(\theta_2 \Theta/2) \cdot\exp(\phi_2 \Phi/2).\label{eq:coordinates}\eeq The Haar measure on $\PSL(2,\C)$ in these coordinates is \beq 4\pi dk_1 da_t dk_2\label{eq:haar}\eeq  Here $dk_1$ is the probability measure on $K/M$, and $dk_2$ is the probability measure on $K$. The normalization of \eqref{eq:haar} is actually quite natural in view of the relation of $G$ to hyperbolic space, as we explain below. 

\section{Hyperbolic space.} We use two models for $\H^3$, the upper half plane and the ball model (see \cite{EGM} for details). The upper half space model is realized as $\{x_1+ix_2+jy\mid x_1,x_2\in\R,y>0\}.$ The hyperbolic metric and volume are $$ds^2=\frac{dx_1^2+dx_2^2+dy^2}{y^2}\quad dV=\frac{dx_1\,dx_2\,dy}{y^3}.$$ The ball model $\B^3$ is realized as the unit ball $ \{x_1+ix_2+jy\colon x^2_1+x_2^2+y^2< 1\}\subset \R^3$. We use the isomorphism $$z\mapsto (z-j)(-jz+1)^{-1}$$ from $\H^3$ to $\B^3$ to transfer the metric. In these coordinates the metric and volume elements are $$ds^2=4\frac{dx_1^2+dx_2^2+dy^2}{(1-x_1^2-x_2^2-y^2)^2}\quad dV=8 \frac{dx_1\,dx_2\,dy}{(1-x_1^2-x_2^2-y^2)^2}.$$ We also identify boundaries $\d\H^3$ and $S^2\cong K/M$ by extending the map there. 

In the ball model there is a convenient set of coordinates that match the action of $G$. Any point in $\B^3=G/K$ is the image of $0$ under an element of $A^+$ and an element of $K/M$. To wit, \begin{multline}\begin{pmatrix}e^{i\phi/2}\\&e^{-i\phi/2}\end{pmatrix}\begin{pmatrix}\cos\frac\theta2&\sin\frac\theta2\\-\sin\frac\theta2&\cos\frac\theta2\end{pmatrix}\begin{pmatrix}e^{t/2}\\&e^{-t/2}\end{pmatrix}0=\\=\th\frac t2(-\cos\phi\sin\theta-i\sin\phi\sin\theta+j\cos\theta).\end{multline} In these coordinates the volume element reads $$dV=\sin\theta d\phi\,d\theta\cdot \sh^2 t\,dt=4\pi dk_1 da_t.$$ This measure extends to all of $G$ as in \eqref{eq:haar}. The advantage of these normalizations is that respect the structure of $G$ as a $K$-bundle over $\H$. In particular, the natural map $L^2(\H)\to L^2(G)$ is an isometry.

\chapter{Line model}

The goal of this section is to understand the action of $\mathfrak g^\C$ on the $K$-types of a complementary series representation. Neither the formula from Lemma \ref{lem:generalform}   nor the action of certain operators on the $K$-types \eqref{eq:haction}, \eqref{eq:Jmaction}, \eqref{eq:Jpaction} is widely available in the literature. Operators similar to those we introduce appeared in \cite{kelmer_logarithm_2011} without normalizations. 

\section{Complementary series for $G$.} Here we review the complementary series representations of $G$ in the line model \cite{knapp_representation_theory, gelfand_teoriya_1966}. Let $0<s<2$ and let $G$ act on functions $f\colon\C\to \C$ by $$\pi_s\begin{pmatrix}a&b\\c&d\\\end{pmatrix}.f(z)=\frac1{|bz+d|^{2s}}f\left(\frac{az+c}{bz+d}\right).$$ Denote by $I$ the intertwining operator \beq\label{eq:intertwining}I.f(\zeta)=\int_\C \frac{f(z)}{|z-\zeta|^{2(2-s)}}\frac i2 dz\wedge d\bar z.\eeq The action above gives a unitary representation of $G$ on the space of functions from $\C$ to $\C$ equipped with the inner product $$\langle f_1,f_2\rangle=\int_\C f_1(z) \overline{I.f_2(z)}\frac i2 dz\wedge d\bar z;$$ call this space $V_s$.  Then the complementary series representation of $G$ with parameter $s$ is $(\pi_s,V_s)$; it is non-tempered and irreducible. 

\section{Lie algebra of $G$.} We parametrize the (real six-dimensional) Lie algebra $\mathfrak g=\Sl(2,\C)\cong \So(3,1)$ by $$
h=\begin{pmatrix}1\\&-1\end{pmatrix}\quad
e=\begin{pmatrix}\phantom{0}&1\\&\end{pmatrix}\quad
f=\begin{pmatrix}&\phantom{0}\\1\end{pmatrix}$$
and
$$
\Ih=\begin{pmatrix}i\\&-i\end{pmatrix}\quad
\Ie=\begin{pmatrix}\phantom{0}&i\\&\end{pmatrix}\quad
\If=\begin{pmatrix}&\phantom{0}\\i\end{pmatrix}.$$  Note that the letter ``$i$'' is concatenated to the symbol it follows.


\section{Universal Enveloping Algebra of $G$.} Let $\mathfrak g^\C$ denote the complexification of $\mathfrak g$; it is a complex six-dimensional Lie algebra whose field of scalars $\C$ we write as $a+b\I$, $a,b\in\R$. As usual, we denote its universal enveloping algebra by $U(\mathfrak g^\C)$ and lift the representation $\pi_s$ to it. We shall indulge in the sin of referring to $\pi_s$ in this way whether it is a representation of $G$, $\mathfrak g$, or $U(\mathfrak g^\C);$ we shall also drop the subscript $s$ in most cases. 

The center $Z$ of $U(\mathfrak g^\C)$ is one-dimensional and is generated by the element $$\Omega= h^2-\Ih^2+2(ef+fe-\Ie \If-\If \Ie);$$ it is called the Casimir operator and has the property that $$\pi(\Omega).F+4s(2-s)F=0$$ for smooth $F\in V_s$. 


\section{$K$-type decomposition.} Consider the decomposition of $(\pi_s,V_s)$ into $K$ types. That is, restrict $\pi$ to $K<G$ and decompose $V_s$ into irreducible representations of $K$. The group $K$ has one irreducible representation in every odd dimension. The  multiplicity of each irreducible representation in $\pi|_K$ is 1. Let $\{v_{lj}^s\colon l\ge 0, |j|\le l\}$ be an orthonormal basis for $V_s$ subject to the conditions \beq\label{eq:system1}\pi(\Omega_K)v_{lj}^s+l(l+1)v_{lj}^s=0\eeq \beq\label{eq:system2}\pi(\Ih)v_{lj}^s=2ijv_{lj}^s,\eeq where $\Omega_K=\frac14(\Ih^2+(\Ie+\If)^2+(e-f)^2)$ is the Casimir element for $K$. Thus for each $l$ the set $\{v_{lj}^s\colon |j|\le l\}$ is a basis for the ($2l+1$)-dimensional irreducible representation of $K$. This allows one to recover exact formulae for $v_{lj}^s$ as solutions to the system of ordinary differential equations \eqref{eq:system1}--\eqref{eq:system2}. We will suppress the index $s$ if no confusion can arise.

\section{Useful operators in $\mathfrak g$.} The following elements of $U(\mathfrak g^\C)$ will be important for our investigation: 
\begin{align}J^{\pm}&=f\pm \I if\\ R&=\Ie+\If+\I(e-f)\\ L&=\Ie+\If-\I(e-f).\end{align}
The operators $R$ and $L$ (raising and lowering, resp.) are the usual ladder operators for $\Su(2)$ and map $v_{lj}$ to multiples of $v_{l,j+1}$ and $v_{l,j-1}$, respectively.

\begin{lemma}
The action of $J^\pm$ satisfies  \beq\label{eq:constantscl}\pi(J^+) v_{ll}=C(l) v_{l+1,l+1}\eeq and $$\pi(J^-)v_{l,-l}=C(l) v_{l+1,-l-1}$$ for certain normalizing constants $C(l)$.
\end{lemma}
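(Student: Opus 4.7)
The strategy is to characterize $v_{l+1,l+1}$ inside $V_s$ (up to scalar) by two conditions involving the $\mathrm{SU}(2)$-action, and then verify that $\pi(J^+)v_{ll}$ satisfies both. Since $R$ is the standard raising operator for $\Su(2)$ and each irreducible $K$-type occurs with multiplicity one in $V_s$, the vector $v_{l+1,l+1}$ is the unique (up to scalar) element of $V_s$ annihilated by $\pi(R)$ and satisfying $\pi(\Ih)v=2i(l+1)v$. Thus it suffices to show $\pi(J^+)v_{ll}$ has these two properties.

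First I would compute two commutators in $\mathfrak g^{\mathbf C}$ directly from the matrix definitions of $h,e,f,\Ih,\Ie,\If$: namely
\[
[\Ih,J^+]=2\I\,J^+,\qquad [R,J^+]=0.
\]
The first is immediate from $[\Ih,f]=-2\If$ and $[\Ih,\If]=2f$. The second is a short cancellation using $[\Ie,f]=\Ih$, $[\Ie,\If]=-h$, $[e,f]=h$, $[e,\If]=\Ih$ and $[\If,f]=[f,\If]=0$. Passing to the representation, where $\pi(\I)=i$ acts as a scalar, these identities read
\[
[\pi(\Ih),\pi(J^+)]=2i\,\pi(J^+),\qquad [\pi(R),\pi(J^+)]=0.
\]

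Next I apply these to $v_{ll}$. Using \eqref{eq:system2}, $\pi(\Ih)v_{ll}=2il\,v_{ll}$, so
\[
\pi(\Ih)\pi(J^+)v_{ll}=\pi(J^+)\pi(\Ih)v_{ll}+2i\,\pi(J^+)v_{ll}=2i(l+1)\pi(J^+)v_{ll}.
\]
Since $v_{ll}$ is the highest $\Ih$-weight vector in the $l$-th $K$-type we have $\pi(R)v_{ll}=0$, hence $\pi(R)\pi(J^+)v_{ll}=\pi(J^+)\pi(R)v_{ll}=0$. Therefore $\pi(J^+)v_{ll}$ is either zero or a highest-weight vector for the $K$-action with weight $2i(l+1)$; by multiplicity one it must be a scalar multiple $C(l)v_{l+1,l+1}$. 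The same argument with the analogous identities $[L,J^-]=0$ and $[\Ih,J^-]=-2\I J^-$ (which I would check the same way) gives $\pi(J^-)v_{l,-l}=C^-(l)v_{l+1,-l-1}$ for some constant $C^-(l)$.

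The one point that is not automatic is the equality $C(l)=C^-(l)$. I would obtain this from a symmetry of $V_s$ that exchanges highest and lowest $\Ih$-weights and swaps $J^+\leftrightarrow J^-$: conjugation by a Weyl-type element $w=\exp(\pi\Theta/2)\in K$ inverts the sign of $\Ih$ and sends the basis vectors to one another up to a common phase, and under the complex-linear extension of $\mathrm{Ad}(w)$ to $\mathfrak g^{\mathbf C}$ it interchanges $J^+$ and $J^-$ (up to the same phase). Choosing the phase in the definition of $v_{lj}$ compatibly — as is implicit in fixing the orthonormal basis in \eqref{eq:system1}--\eqref{eq:system2} — forces the two constants to coincide. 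This bookkeeping of phases is the only subtle step; the rest of the argument is a mechanical commutator calculation combined with the multiplicity-one structure of the $K$-type decomposition.
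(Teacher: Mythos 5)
Your argument is correct, and it follows the same basic template as the paper's proof --- pin down $v_{l+1,l+1}$ by two conditions involving the $K$-action and verify both via commutators in $\mathfrak g^\C$ --- but the pair of conditions you choose is different in a way worth noting. The paper characterizes the target vector by its Casimir eigenvalue and its $\Ih$-weight, which requires computing $\pi([\Omega_K,J^+])v_{ll}=-2(l+1)\pi(J^+)v_{ll}$; that computation is not a clean Lie-algebra identity but holds only after substituting the relations $\pi(R)v_{ll}=0$ and $\pi(X)v_{ll}=2\I l\,v_{ll}$ midway through. You instead use the exact identities $[R,J^+]=0$ and $[\Ih,J^+]=2\I J^+$ (both of which check out against the matrix definitions), replacing the Casimir condition by annihilation under the raising operator; this is slightly slicker, since a nonzero $K$-finite vector of $\Ih$-weight $2i(l+1)$ killed by $R$ can only sit in the $K$-type $l'=l+1$. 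Note that the paper's argument also secretly uses $\pi(R)v_{ll}=0$, so you are not assuming anything extra.

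On the equality of the two constants: the defining conditions \eqref{eq:system1}--\eqref{eq:system2} fix each $v_{lj}$ only up to phase, so $C(l)=C^-(l)$ is really a statement about the normalization chosen later in Lemma \ref{lem:line_normalization}, where $v_{l,\pm l}$ are given by explicit formulas with a common constant and the claim can be read off from \eqref{eq:Jcoordinates1}--\eqref{eq:Jcoordinates2}. The paper sidesteps this by saying the second proof is ``similar''; your Weyl-element symmetry is a legitimate alternative and you correctly flag the phase bookkeeping as the one non-mechanical point. Either way the lemma as used downstream only needs proportionality plus the eventual explicit normalization, so nothing is at stake.
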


\begin{proof}

We only prove the first statement; the proof of the second one is similar. 

The computation lies entirely in the Lie algebra. Define the following elements of $\Su(2)$: $$X=\begin{pmatrix}i\\&-i\end{pmatrix}\quad Y=\begin{pmatrix}&1\\-1\end{pmatrix}\quad Z=\begin{pmatrix}&i\\i\end{pmatrix}$$ and the following elements from $\Sl(2,\C)$: $$iX=\begin{pmatrix}-1\\&1\end{pmatrix}\quad iY=\begin{pmatrix}&i\\-i\end{pmatrix}\quad iZ=\begin{pmatrix}&-1\\-1\end{pmatrix}.$$ The Casimir operator for $\Su(2)$ can be written as $$\Omega_K=\frac14(X^2+Y^2+Z^2).$$ With this normalization we have $$\pi(\Omega_K )v_{lj}=-l(l+1)v_{lj}$$ for every $|j|\le l$. Thus to claim that $f+\I \If$ is the correct operator we need two conditions to hold: \beq\label{eq:casimir_condition}\pi([\Omega_K,f+\I \If])v_{ll}=-2(l+1)v_{ll}\eeq and \beq\label{eq:X_condition}\pi([X/2,f+\I \If])v_{ll}=\I(f+\I \If)v_{ll}.\eeq This first condition ensures that the Casimir eigenvalue changes from $-l(l+1)$ to $-(l+1)(l+2)$, while the second condition guarantees that the eigenvalue of $X/2$ is increased by $\I$. Since we assume we start with $v_{ll}$, our initial assumptions are \begin{align} \pi(\Omega_K) v_{ll}&=-l(l+1)v_{ll},\\\pi(X)v_{ll}&= 2\I lv_{ll},\\ \pi(R)v_{ll}&=\pi(Z+\I Y)v_{ll} =0.\end{align} Thus we compute \begin{multline}[\Omega_K,f+\I \If]=\\=\left[\frac{X^2+Y^2+Z^2}{4},\frac{-Y-iZ+\I Z-\I iY}{2}\right]=\frac12(X(iY-\I iZ)-iX(Y-\I Z))=\\=\frac12(iX \I(Z+\I Y)+(iY-\I iZ)X+[X,iY-\I iZ]).\label{eq:commutator}\end{multline} The first term contains the raising operator $Z+\I Y$, so we drop it. The second term contains $X$, which we replace by $2\I l$. The last term we massage further to get $$ (l+1)(iZ+\I iY).$$ Recalling that $\pi(Z+\I Y)=0$ on $v_{ll}$, we can write \eqref{eq:commutator} as $$-2(l+1)\frac{-Y-iZ+\I Z-\I iY}{2},$$ confirming \eqref{eq:casimir_condition}. The second condition \eqref{eq:X_condition} is verified directly by observing that $$\left[\frac X2,\frac{-Y-iZ+\I Z-\I iY}{2}\right]=\I  \frac{-Y-iZ+\I Z-\I iY}{2}.$$

\end{proof}

These operators provide an easy way of going from one $K$ representation to another, and we call them ``jumping operators'' for this reason.  We have that $v_{lj}$ is a multiple of $$\pi\left( L^{l-j}(J^+)^l\right)v_{00}.$$ 

In polar coordinates $z=re^{i\alpha}$ we can write the operators as \beq\label{eq:Rcoordinates}\pi(R)=e^{i\alpha}\left(2isr+i(r^2+1)\d_r+\left(r-\frac1r\right)\d_\alpha\right)\eeq and 
\beq\label{eq:Lcoordinates}\pi(L)=e^{-i\alpha}\left(-2isr-i(r^2+1)\d_r+\left(r-\frac1r\right)\d_\alpha\right).\eeq For the jumping operators we have \beq\label{eq:Jcoordinates1}\pi(J^+)=-e^{i\alpha}\left(\d_r+\frac{i}{r}\d_\alpha\right)\eeq
and \beq\label{eq:Jcoordinates2}\pi(J^-)=-e^{-i\alpha}\left(\d_r-\frac{i}{r}\d_\alpha\right).\eeq Finally for the last two operators of the basis $\{J^+,J^-,R,L,h,\Ih\}$ we have \beq\label{eq:hcoordinates}\pi(h)=2s+2r\d_r\eeq and \beq\label{eq:ihcoordinates}\pi(\Ih)=2 \d_\alpha.\eeq

\section{Normalizing operators.} In this section we establish how the operators $J^\pm, R,L$ defined above affect the $L^2$ norm of $v_{lj}$ and compute $v_{lj}$. It is well-known that \beq \pi(R)v_{lj}=2\sqrt{(l-j)(l+j+1)}v_{l,j+1}\label{eq:Rnormalization}\eeq and \beq\label{eq:Lnormalization}\pi(L)v_{lj}=2\sqrt{(l+j)(l-j+1)}v_{l,j-1}.\eeq In order to calculate the constants $C(l)$ from \eqref{eq:constantscl} we need to compute $v_{ll}$ and the constant for the intertwining operator.

\begin{lemma}
We have that $v_{ll}$ is a multiple of $$r^le^{il\alpha}(1+r^2)^{-s-l}$$
\end{lemma}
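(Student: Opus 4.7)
The plan is to identify $v_{ll}$ by its characterization as the highest weight vector inside the spin-$l$ irreducible $K$-subrepresentation, then read off its functional form by solving two first-order PDEs in the polar coordinates $z = re^{i\alpha}$.

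First I would record the two properties of $v_{ll}$ that single it out (up to a scalar) within $V_s$: it is a weight vector of weight $l$ for the $K$-Cartan, namely $\pi(\Ih) v_{ll} = 2il\, v_{ll}$, and it is annihilated by the raising operator, $\pi(R) v_{ll} = 0$. The first condition is \eqref{eq:system2} specialized to $j=l$. The second is the defining property of a highest weight vector and was already used in the proof of the previous lemma (where $\pi(R) v_{ll} = 0$ was one of the initial assumptions).

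Next I would use the coordinate formula \eqref{eq:ihcoordinates}, $\pi(\Ih) = 2\partial_\alpha$, to deduce from the eigenvalue equation $2\partial_\alpha v_{ll} = 2il\, v_{ll}$ that $v_{ll}$ has the angular form
\[
v_{ll}(r,\alpha) = f(r)\, e^{il\alpha}
\]
for some function $f$ of $r$ alone. Substituting this ansatz into \eqref{eq:Rcoordinates} and setting $\pi(R) v_{ll} = 0$ gives, after dividing by $i\, e^{i(l+1)\alpha}$, the ODE
\[
(r^2+1)\, f'(r) + \Bigl(2sr + l\bigl(r - \tfrac{1}{r}\bigr)\Bigr) f(r) = 0.
\]
This rearranges to
\[
\frac{f'(r)}{f(r)} = \frac{l}{r} - \frac{2(s+l)\, r}{r^2+1},
\]
after the elementary partial-fraction simplification $\frac{l(r^2-1)}{r(r^2+1)} = -\frac{l}{r} + \frac{2lr}{r^2+1}$.

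Integrating both sides yields $\log f(r) = l \log r - (s+l) \log(1+r^2) + \mathrm{const}$, so
\[
f(r) = c\, r^l (1+r^2)^{-s-l},
\]
which is exactly the claimed formula. The only minor point to check is that this solution does lie in $V_s$, i.e.\ that it is integrable against the intertwiner $I$ of \eqref{eq:intertwining}; this follows from the decay $(1+r^2)^{-s-l}$ at infinity and the vanishing of $r^l$ at the origin for $l \geq 0$, and the normalizing constant $c$ is then fixed by $\|v_{ll}\|=1$, but the lemma only asks for the answer up to a scalar. There is no real obstacle; the whole argument is a one-variable ODE once the two highest-weight conditions have been translated through \eqref{eq:Rcoordinates} and \eqref{eq:ihcoordinates}.
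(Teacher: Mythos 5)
Your proof is correct, and the computation checks out: the ODE $(r^2+1)f' + (2sr + l(r - \tfrac1r))f = 0$ follows correctly from \eqref{eq:Rcoordinates}, and the partial-fraction step and integration are right. The paper's proof is the one-line remark that the formula follows from solving the system \eqref{eq:system1}--\eqref{eq:system2}, i.e.\ the second-order Casimir equation together with the weight condition; you instead replace the Casimir equation by the first-order highest-weight condition $\pi(R)v_{ll}=0$ (which the paper itself records as a property of $v_{ll}$ in the proof of the preceding lemma, and which is equivalent given multiplicity one of each $K$-type). This is the same strategy — characterize $v_{ll}$ by differential equations in the polar coordinates of the line model and solve — but your variant reduces the problem to a single separable first-order ODE, which is a genuine economy over integrating the Casimir operator directly; the uniqueness argument (a weight-$l$ vector killed by $R$ must be the highest weight vector of the spin-$l$ type) is standard and correctly implicit in your setup.
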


\begin{proof}
Follows from solving the system of differential equations \eqref{eq:system1}--\eqref{eq:system2}.
\end{proof}

\begin{lemma}
The action of the intertwining operator $I\colon V_s\to V_{2-s}$ satisfies $$I.v_{ll}^s=(-1)^l\pi \frac{(\Gamma(s-1))^2}{\Gamma(l+s)\Gamma(s-l-1)}v_{ll}^{2-s}.$$ 
\end{lemma}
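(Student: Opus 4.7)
The plan is to compute $I.v_{ll}^s$ as an explicit integral, recognize the result as proportional to $v_{ll}^{2-s}$, and then manipulate the constant into the stated Gamma-function form.

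Setup. Since $I$ intertwines $\pi_s$ with $\pi_{2-s}$ and the $K$-type decomposition of each complementary series is multiplicity-free, the image $I.v_{ll}^s$ must lie in the line spanned by $v_{ll}^{2-s}$, so $I.v_{ll}^s=\lambda(s,l)\,v_{ll}^{2-s}$ for a scalar $\lambda(s,l)$. By the previous lemma both $v_{ll}^s$ and $v_{ll}^{2-s}$ are scalar multiples of $z^l(1+|z|^2)^{-s-l}$ and $\zeta^l(1+|\zeta|^2)^{-(2-s)-l}$ respectively, with the normalizing constants forced to agree by the unitarity condition $\|v_{ll}^s\|_s=1$ applied through the intertwining pairing $\langle\cdot,\cdot\rangle_s=\int(\cdot)\overline{I(\cdot)}\,dA$. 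It therefore suffices to evaluate
$$\int_\C \frac{z^l(1+|z|^2)^{-s-l}}{|z-\zeta|^{2(2-s)}}\,\tfrac{i}{2}\,dz\wedge d\bar z$$
and read off the constant.

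Main computation. Expand the kernel by the binomial series $|z-\zeta|^{-2(2-s)}=|z|^{-2(2-s)}\sum_{k,m\ge 0}\frac{(2-s)_k(2-s)_m}{k!\,m!}(\zeta/z)^k(\bar\zeta/\bar z)^m$, pass to polar coordinates $z=re^{i\alpha}$, and note that the $\alpha$-integral vanishes unless $k=l+m$. The residual radial integrals are of beta type,
$$\int_0^\infty r^{2s-3-2m}(1+r^2)^{-s-l}\,dr=\frac{\Gamma(s-1-m)\Gamma(l+1+m)}{2\,\Gamma(s+l)},$$
so the entire calculation collapses to a single series in $|\zeta|^{2m}$ with a common factor of $\zeta^l$.

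Recognition and rewriting. Apply the reflection identity $\Gamma(s-1-m)(2-s)_m=(-1)^m\pi/(\sin\pi(s-1)\Gamma(2-s))$, which is independent of $m$. This turns the series into a multiple of the binomial expansion of $(1+|\zeta|^2)^{-(2-s)-l}$ with constant $\lambda(s,l)=\pi\Gamma(s-1)\Gamma(2-s+l)/(\Gamma(s+l)\Gamma(2-s))$. Finally, use the reflection identity once more in the form $\Gamma(2-s+l)\Gamma(s-l-1)=\pi/\sin\pi(s-l-1)=(-1)^l\Gamma(s-1)\Gamma(2-s)$, which lets us rewrite
$$\frac{\Gamma(2-s+l)}{\Gamma(2-s)}=\frac{(-1)^l\Gamma(s-1)}{\Gamma(s-l-1)}$$
and convert $\lambda(s,l)$ into the claimed form $(-1)^l\pi\Gamma(s-1)^2/(\Gamma(l+s)\Gamma(s-l-1))$.

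The main obstacle is bookkeeping rather than conceptual: one must verify that the normalization constants of $v_{ll}^s$ and $v_{ll}^{2-s}$ agree (so that the integral constant is genuinely $\lambda(s,l)$ and not $\lambda(s,l)\cdot a_{l,s}/a_{l,2-s}$), and one must apply the reflection formula twice---once to sum the series in closed form and once to convert between the two equivalent expressions for the answer. The positivity of the inner product on $V_s$ for $1<s<2$ ensures everything is well-defined throughout.
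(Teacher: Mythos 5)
Your strategy is the same as the paper's: use the intertwining property (plus multiplicity one for the $K$-types) to reduce the claim to identifying a single scalar, then evaluate the integral explicitly. The final constant is correct, and both of your reflection-formula manipulations check out: $(2-s)_m\Gamma(s-1-m)=(-1)^m\Gamma(s-1)$ resums the series into $(1+|\zeta|^2)^{-(2-s)-l}$, and $\Gamma(2-s+l)/\Gamma(2-s)=(-1)^l\Gamma(s-1)/\Gamma(s-l-1)$ converts $\lambda(s,l)$ into the stated form.

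There is, however, a gap in the main computation. The binomial expansion of $|z-\zeta|^{-2(2-s)}$ in powers of $\zeta/z$ converges only for $|z|>|\zeta|$, and integrating term by term over all of $\C$ produces the radial integrals $\int_0^\infty r^{2s-3-2m}(1+r^2)^{-s-l}\,dr$, which diverge at $r=0$ for every $m\ge 1$ (convergence at the origin requires $m<s-1<1$); the beta-function value you assign them is an analytic continuation, not the integral. The clean repair is precisely the paper's shortcut: since you already know $I.v_{ll}^s=\lambda\, v_{ll}^{2-s}$, it suffices to match a single Taylor coefficient, namely apply $\partial_r^l$ at $r=0$, $\alpha=0$ (equivalently, extract the coefficient of $\zeta^l$ at $\zeta=0$). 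Only the $m=0$ term contributes there, its radial integral $\Gamma(s-1)\Gamma(l+1)/(2\Gamma(s+l))$ genuinely converges, and it already yields $\lambda=\pi\,\Gamma(s-1)(2-s)_l/\Gamma(s+l)$, so the full resummation is unnecessary. One further caution on normalization: at this point in the paper $v_{ll}^s$ is fixed only up to scalar, and the stated constant corresponds to taking $v_{ll}^s$ and $v_{ll}^{2-s}$ with the \emph{same} leading coefficient in front of $r^le^{il\alpha}(1+r^2)^{-s-l}$; your appeal to unitarity to force the constants to agree is circular, because the paper's normalization of $v_{ll}^s$ (the following lemma) is itself derived from the present one.
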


\begin{proof}
It is clear that  $$I.v_{ll}^s(r,\alpha)=\const \cdot v_{ll}^{2-s}(r,\alpha)$$ since $I$ intertwines the action of $\pi_s$. The only question is establishing the value of the constant. We plug in $\alpha=0$ and apply $\left.\frac{d^l}{dr^l}\right|_{r=0}$ to both sides, recalling the definition of the intertwining operator \eqref{eq:intertwining}. The result follows. 
\end{proof}

\begin{lemma}\label{lem:line_normalization}
$$v_{l,\pm l}(z)= \frac{\sqrt{(-1)^l\Gamma(l+s)\Gamma(s-l-1) (2l+1)!}}{l! \pi\Gamma(s-1)} r^l e^{\pm il\alpha}(1+r^2)^{-s-l}.$$ 
\end{lemma}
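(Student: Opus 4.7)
The lemma specifies the normalization constant for $v_{l,\pm l}$: by the immediately preceding lemma, $v_{ll}^s(z)=c_l^s\, r^l e^{il\alpha}(1+r^2)^{-s-l}$ for some scalar $c_l^s$, so all that remains is to pin down this scalar. My plan is to impose $\|v_{ll}^s\|^2_{V_s}=1$ and read $c_l^s$ off the resulting identity, using the second lemma to eliminate the intertwining operator $I$ from the inner product.

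Substituting $v_{ll}^s=c_l^s\,r^l e^{il\alpha}(1+r^2)^{-s-l}$ into $\|v_{ll}^s\|^2=\int_\C v_{ll}^s\cdot\overline{I.v_{ll}^s}\,\tfrac{i}{2}\,dz\wedge d\bar z$ and invoking the second lemma, $I.v_{ll}^s=K\cdot v_{ll}^{2-s}$ with $K=(-1)^l\pi(\Gamma(s-1))^2/(\Gamma(l+s)\Gamma(s-l-1))$, the inner product collapses to
$$|c_l^s|^2\, K\int_\C r^{2l}(1+r^2)^{-(s+l)-(2-s+l)}\,\tfrac{i}{2}\,dz\wedge d\bar z,$$
since the $e^{il\alpha}$ and $e^{-il\alpha}$ phases cancel and the two exponents add to $-2l-2$. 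In polar coordinates $\tfrac{i}{2}\,dz\wedge d\bar z=r\,dr\,d\alpha$, the angular integral yields $2\pi$, and the radial piece $\int_0^\infty r^{2l+1}(1+r^2)^{-2l-2}\,dr$ reduces via $u=r^2$ to $\tfrac{1}{2}\Beta(l+1,l+1)=\tfrac{l!^2}{2(2l+1)!}$. Setting $|c_l^s|^2\,K\,\tfrac{\pi\,l!^2}{(2l+1)!}=1$ and solving yields $|c_l^s|^2=(-1)^l(2l+1)!\,\Gamma(l+s)\Gamma(s-l-1)/(\pi^2(\Gamma(s-1))^2 l!^2)$, which is the square of the advertised constant.

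Two minor points need care but present no real obstacle. First, $(-1)^l\Gamma(s-l-1)>0$ for $1<s<2$ and $l\ge 0$ because $\Gamma$ alternates sign on successive intervals $(-k-1,-k)$, so the expression under the square root is positive and $c_l^s$ can be taken real; this is consistent with the hypothesis $\delta>1$ of the main theorem. Second, the $v_{l,-l}$ case is handled identically by replacing $e^{il\alpha}$ with $e^{-il\alpha}$ throughout (the integrand is unchanged), or equivalently by using the symmetry $j\mapsto -j$ together with \eqref{eq:Lnormalization}. The entire argument is a short explicit calculation once the two previous lemmas are in hand.
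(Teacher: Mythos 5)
Your proof is correct and follows essentially the same route as the paper: impose $\langle v_{ll},v_{ll}\rangle=1$ with the intertwining-operator inner product, use the preceding lemma to replace $I.v_{ll}^s$ by an explicit multiple of $r^le^{il\alpha}(1+r^2)^{-(2-s)-l}$, and evaluate the resulting Beta integral $\tfrac12\Beta(l+1,l+1)=\tfrac{(l!)^2}{2(2l+1)!}$. Your write-up is in fact slightly more careful than the paper's displayed computation (whose written integrand omits the $r^{2l}$ factor and the angular $2\pi$), but both yield the stated constant.
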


\begin{proof}
Let $$v_{ll}(r,\alpha)=b_l r^le^{il\alpha}(1+r^2)^{-s-l}$$ for $b_l>0$. Then $$1=\langle v_{ll},v_{ll}\rangle=\int_\C v_{ll}(z) \overline{I.v_{ll}(z)}\frac i2 dz\wedge d\bar z$$ and from the previous Lemma we have $$1=|b_l|^2(-1)^l\pi \frac{(\Gamma(s-1))^2}{\Gamma(l+s)\Gamma(s-l-1)}\int_{r=0}^\infty (1+r^2)^{-s-l}(1+r^2)^{s-2-l}r dr.$$ Evaluating the integral and solving for $b_l$ yields the desired result. 
\end{proof}

The resulting expressions for general $v_{lj}$ are given the next 

\begin{lemma}\label{lem:generalform}
For $j\ge 0$ we have \begin{multline} \label{eq:vlj}v_{lj}(r,\alpha)=\frac{\sqrt{(-1)^l\Gamma(l+s)\Gamma(s-l-1)(2l+1)(l-j)!(l+j)!}}{l!\pi \Gamma(s-1)} \\ \frac{e^{ij\alpha}}{(r^2+1)^{s+l}}\sum_{k=0}^{l-j}r^{2(l-k)-j}\binom l{l-j-k}\binom lk (-1)^k.\end{multline} A similar formula holds for $j<0.$
\end{lemma}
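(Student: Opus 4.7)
The plan is to compute $v_{lj}$ for $j\ge 0$ by lowering from $v_{ll}$, whose explicit form was established in Lemma \ref{lem:line_normalization}. By the normalization \eqref{eq:Lnormalization}, $\pi(L)v_{lm}=2\sqrt{(l+m)(l-m+1)}\,v_{l,m-1}$, and iterating this $l-j$ times and solving for $v_{lj}$ gives
\[
v_{lj} \;=\; \frac{1}{2^{l-j}}\sqrt{\frac{(l+j)!}{(2l)!\,(l-j)!}}\;\pi(L)^{l-j}\,v_{ll}.
\]
Thus everything reduces to a computation of $\pi(L)^{l-j}v_{ll}$ in the coordinates $(r,\alpha)$.

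First I would observe that $v_{ll}$, from Lemma \ref{lem:line_normalization}, has the form $e^{il\alpha}\,r^{l}(r^2+1)^{-s-l}$ times a constant. A direct computation using the coordinate formula \eqref{eq:Lcoordinates} shows that if $\psi_m(r,\alpha)=e^{im\alpha}\,g_m(r)\,(r^2+1)^{-s-l}$ with $g_m$ a polynomial of degree $2l-m$, then after factoring out $e^{i(m-1)\alpha}$ and $(r^2+1)^{-s-l}$, the operator $\pi(L)$ becomes a first-order differential operator in $r$ alone that sends $g_m$ to a polynomial of degree $2l-(m-1)$. Consequently
\[
v_{lj}(r,\alpha)=\frac{C_j\,e^{ij\alpha}}{(r^2+1)^{s+l}}\,P_j(r)
\]
for some polynomial $P_j$ of degree $2l-j$ and some explicit constant $C_j$; the latter is just the square root prefactor above times the constant from Lemma \ref{lem:line_normalization}, and upon simplification yields the claimed factor $\sqrt{(-1)^{l}\Gamma(l+s)\Gamma(s-l-1)(2l+1)(l-j)!(l+j)!}\,/\,(l!\,\pi\,\Gamma(s-1))$.

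The core of the proof is a combinatorial verification that
\[
P_j(r)=\sum_{k=0}^{l-j}r^{\,2(l-k)-j}\binom{l}{l-j-k}\binom{l}{k}(-1)^{k}
\]
is produced by the iterated action of $\pi(L)$ on $r^{l}$. I would prove the stronger inductive claim that at every intermediate step $m\in\{l,l-1,\dots,j\}$ the polynomial $P_{m}$ has the same binomial shape with $m$ in place of $j$. The base case $m=l$ is immediate: only $k=0$ contributes and yields $r^{l}$. For the inductive step, applying the first-order differential operator produced by \eqref{eq:Lcoordinates} to the expression for $P_{m}$ and matching coefficients of $r^{\,2(l-k)-(m-1)}$ reduces to an elementary binomial identity that expresses the coefficients of $P_{m-1}$ in terms of those of $P_{m}$. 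The formula for $j<0$ then follows symmetrically by raising from $v_{l,-l}$ with $\pi(R)$ and using \eqref{eq:Rnormalization} and \eqref{eq:Rcoordinates}.

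The main obstacle is this combinatorial bookkeeping: a naive iteration makes the coefficients look unwieldy, so the trick is to guess the closed form a priori and verify the recursion, rather than iterate blindly. Once the inductive step is reduced to identifying coefficients of a single monomial $r^{\,2(l-k)-(m-1)}$, the identity is a one-line check, and the multiplicative constants track cleanly because the factor $\sqrt{(l+m)(l-m+1)}$ coming from \eqref{eq:Lnormalization} is exactly absorbed into the square-root prefactor accumulated during the iteration.
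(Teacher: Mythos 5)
Your proposal is correct and follows essentially the same route as the paper: downward induction from the base case $j=l$ (Lemma \ref{lem:line_normalization}), applying the lowering operator $L$ in the coordinate form \eqref{eq:Lcoordinates} with the normalization \eqref{eq:Lnormalization} and verifying the closed binomial form of the polynomial at each step. You supply more of the combinatorial detail than the paper does, but the method is identical.
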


\begin{proof}
This formula is verified by induction on $j$. The base case is $j=l$, which is Lemma \ref{lem:line_normalization}. The induction step is going from $j$ to $j-1$; it is carried out by applying the operator $L$ from \eqref{eq:Lcoordinates} with normalization from \eqref{eq:Lnormalization}.
\end{proof}

Lemma \ref{lem:generalform} above allows us to characterize the action of $\mathfrak g^\C$ on the $K$-types. By direct substitution we can verify that 

\small

\begin{align}\label{eq:haction} \frac12 \pi(h)v_{lj}&=\sqrt{\frac{(l+1-s)(l-1+s)(l^2-j^2)}{(2l-1)(2l+1)}} v_{l-1,j}-\sqrt{\frac{(l+2-s)(l+s)((l+1)^2-j^2)}{(2l+1)(2l+3)}}v_{l+1,j}\\
 \label{eq:Jpaction} \pi(J^+) v_{lj}&=\sqrt{\frac{(l+2+j)(l+1+j)(s+l)(l+2-s)}{(2l+1)(2l+3)}}v_{l+1,j+1}-\sqrt{(l+j+1)(l-j)} v_{l,j+1}+\\
&+\sqrt{\frac{(l-j)(l-j-1)(l-1+s)(l+1-s)}{(2l-1)(2l+1)}} v_{l-1,j+1}\notag\\
 \label{eq:Jmaction} \pi(J^-) v_{lj}&=\sqrt{\frac{(l+2-j)(l+1-j)(s+l)(l+2-s)}{(2l+1)(2l+3)}}v_{l+1,j-1}-\sqrt{(l-j+1)(l+j)} v_{l,j-1}+\\
&+\sqrt{\frac{(l+j)(l+j-1)(l-1+s)(l+1-s)}{(2l-1)(2l+1)}} v_{l+1,j-1}\notag
\end{align}

\normalsize



\chapter{Automorphic model}

Now we need to review some representation theory of $G$ in the automorphic model. All results are standard, except for formulae from Section \ref{sec:kak} that are not to the best of our knowledge. 

\section{Spectral decomposition.} 
Consider the decomposition of the right regular representation of $G$ on $$L^2(\Gamma\quot G)=V_{\phi_0}\oplus V_{\phi_1}\oplus\dots\oplus V_{\phi_d}\oplus V_{\text{temp}}.$$ Each $V_{\phi_n}$ is isomorphic to a complementary series representation of $G$ with parameter $s_n$ and is irreducible; $V_{\text{temp}}$ is the reducible part consisting of the tempered representations (see \cite{gelfand_teoriya_1966,knapp_representation_theory}). Each $V_{\phi_n}$ contains exactly one positive $K$-fixed vector, up to scalar multiplication; call this vector $\phi_n$. These functions are well-defined on $\Gamma\quot \H^3$, and satisfy $$\Delta \phi_n+s_n(2-s_n)\phi_n=0,$$ where $\Delta$ is the Laplace-Beltrami operator on the corresponding hyperbolic manifold. 

\section{Patterson-Sullivan theory.\label{sec:ps}} The function $\phi_0$ has the smallest eigenvalue and is called the base eigenfunction; it can be realized explicitly as the integral of the Poisson kernel (raised to the power $\delta$) against the Patterson-Sullivan measure $\nu$. This measure is supported on a Cantor set $\Lambda\subset\d\H^3\cong S^2$ and has Hausdorff dimension $\delta$ (see \cite{patterson_limit_set, sullivan_entropy}). That is, the connection between $\nu$ and $\phi_0$ is \beq\label{eq:ps_poisson}\phi_0(\phi,\theta,r)=\int_{(v,u)\in S^2}(P(\phi,\theta,r;v,u))^\delta d\nu (v,u),\eeq where \beq\label{eq:poisson_kernel}P(\phi,\theta,r;v,u)=\frac{1-r^2}{1-2r(\sin\theta\cos\phi\sin u\cos v +\sin\theta\sin\phi \sin u\sin v+\cos\theta\cos u)+r^2}\eeq is the Poisson kernel in the Euler angle parametrization. The variable $r$ is the distance in the disk model with corresponds to $\th \frac t2$ in our coordinates on $A^+$.

We can also view the hyperbolic three space in the upper half-plane model $x_1+ix_2+jy$ with real $x_1$, $x_2$, $y$ as in \cite{EGM}. The action of $G$ on $\H^3$ is by M\"obius transformations with quaternion multiplication. In these coordinates we have $$\phi_0(x_1+ix_2+jy)=\int_{z\in\C\cup\{\infty\}} \left(\frac{y(1+|z|^2)}{|z-(x_1+ix_2)|^2+y^2}\right)^\delta d\nu(z).$$ 

There is a similar statement relating eigenfunctions $\phi_n$ with eigenvalues $s_n$ for $n\ge 1$ to certain distributions $D_{\Gamma,n}$, not measures, on $\d\H^3$ via the Poisson-Helgason transform \cite{grellier_otal_bounded, nalini_zelditch_patterson} (see \cite{helgason_groups_geometric} for  a discussion of the $\SL(2,\R)$ case). We have \beq\label{eq:distributions}\phi_n(\phi,\theta,r)=D_{\Gamma,n}((P(\phi,\theta,r;\cdot))^{s_n}).\eeq In \cite{grellier_otal_bounded} it is proven that $D_{\Gamma,n}$ is in the dual space of $C^{2-s_n}(S^2).$ 

\section{Lie algebra elements in $KA^+K$ coordinates.\label{sec:kak}} We will subsequently need  the action of $U(\mathfrak g^\C)$ in $KA^+K$ coordinates; specifically, in coordinates from \eqref{eq:coordinates}. This equation is a smooth map from $\R^6$ to $G$. Its derivative is a map from $\mathrm{Lie}(\R^6)$ to $\mathfrak g$. The derivative map is invertible, and we can write the basis $\{h,\Ih,e,\Ie,f,\If\}$ in terms of the basis $\{\d_\phi,\d_\theta,\d_\psi,\d_t,\d_{\theta_2},\d_{\phi_2}\}.$ We suppress the lengthy computation and only cite the final result. 

\small
\begin{multline}
\frac h2=-\cth t \sin \theta_2 \d_{\theta_2}+\cos \theta_2 \d_{t}-\ctg \theta \csch t \sin \psi \sin \theta_2 \d_{\psi}+\\+\cos \psi \csch t \sin \theta_2 \d_{\theta}+\cosec \theta \csch t \sin \psi \sin \theta_2 \d_{\phi}\label{eq:formulae}
\end{multline}
\begin{multline}
\frac{\Ih}2=\d_{\phi_2}\\
\end{multline}
\begin{multline}
\frac e2=-\frac{1}{2} (\ctg \theta_2+\cth t \cosec \theta_2) \sin \phi_2 \d_{\phi_2}+\frac{1}{2} \cos \phi_2 (1+\cos \theta_2 \cth t) \d_{\theta_2}+\frac{1}{2} \cos \phi_2 \sin \theta_2 \d_{t}+\\+\frac{1}{8} e^{-t} \cosec \frac{\theta_2 }{2} \csch t \sec \frac{\theta_2 }{2}((-1+\cos \theta_2) \sin \phi_2+e^{2 t} (1+\cos \theta_2) \sin \phi_2+\\+2 e^t \ctg \theta (\cos \psi \sin \phi_2+\cos \phi_2 \cos \theta_2 \sin \psi) \sin \theta_2) \d_{\psi}+\\+\frac{1}{2} \csch t (-\cos \phi_2 \cos \psi \cos \theta_2+\sin \phi_2 \sin \psi) \d_{\theta}-\\-\frac{1}{2} \cosec \theta \csch t (\cos \psi \sin \phi_2+\cos \phi_2 \cos \theta_2 \sin \psi) \d_{\phi}
\end{multline}
\begin{multline}
\frac{\Ie}2=-\frac{1}{2} \cos \phi_2 (\ctg \theta_2+\cth t \cosec \theta_2) \d_{\phi_2}-\frac{1}{2} (1+\cos \theta_2 \cth t) \sin \phi_2 \d_{\theta_2}-\\-\frac{1}{2} \sin \phi_2 \sin \theta_2 \d_{t}+\frac{1}{2} (\cos \phi_2 (\ctg \theta_2 \cth t+\cosec \theta_2+\cos \psi \ctg \theta \csch t)-\\-\cos \theta_2 \ctg \theta \csch t \sin \phi_2 \sin \psi) \d_{\psi}+\frac{1}{2} \csch t (\cos \psi \cos \theta_2 \sin \phi_2+\cos \phi_2 \sin \psi) \d_{\theta}+\\+\frac{1}{2} \cosec \theta \csch t (-\cos \phi_2 \cos \psi+\cos \theta_2 \sin \phi_2 \sin \psi) \d_{\phi}
\end{multline}
\begin{multline}
\frac f2=\frac{1}{2} (\ctg \theta_2-\cth t \cosec \theta_2) \sin \phi_2 \d_{\phi_2}+\frac{1}{2} \cos \phi_2 (-1+\cos \theta_2 \cth t) \d_{\theta_2}+\frac{1}{2} \cos \phi_2 \sin \theta_2 \d_{t}+\\+\frac{1}{8} e^{-t} \cosec \frac{\theta_2 }{2} \csch t \sec \frac{\theta_2 }{2}(e^{2 t} (-1+\cos \theta_2) \sin \phi_2+(1+\cos \theta_2) \sin \phi_2+\\+2 e^t \ctg \theta (\cos \psi \sin \phi_2+\cos \phi_2 \cos \theta_2 \sin \psi) \sin \theta_2) \d_{\psi}+\\+\frac{1}{2} \csch t (-\cos \phi_2 \cos \psi \cos \theta_2+\sin \phi_2 \sin \psi) \d_{\theta}-\\-\frac{1}{2} \cosec \theta \csch t (\cos \psi \sin \phi_2+\cos \phi_2 \cos \theta_2 \sin \psi) \d_{\phi}
\end{multline}
\begin{multline}
\frac{\If}2=-\frac{1}{2} \cos \phi_2 (\ctg \theta_2-\cth t \cosec \theta_2) \d_{\phi_2}+\frac{1}{2} (-1+\cos \theta_2 \cth t) \sin \phi_2 \d_{\theta_2}+\\+\frac{1}{2} \sin \phi_2 \sin \theta_2 \d_{t}+\frac{1}{4} e^{-t} \csch t (-\cos \phi_2 (\ctg \theta_2+\cosec \theta_2)-\\-2 e^t \ctg \theta (\cos \phi_2 \cos \psi-\cos \theta_2 \sin \phi_2 \sin \psi)+e^{2 t} \cos \phi_2 \tg\frac{\theta_2 }{2}) \d_{\psi}-\\+\frac{1}{2} \csch t (\cos \psi \cos \theta_2 \sin \phi_2+\cos \phi_2 \sin \psi) \d_{\theta}+\\+\frac{1}{2} \cosec \theta \csch t (\cos \phi_2 \cos \psi-\cos \theta_2 \sin \phi_2 \sin \psi) \d_{\phi}\label{eq:formulae_last}
\end{multline}

\normalsize
These formulae will be useful later.

\chapter{Decay of matrix coefficients} 

In this section we state well-known results on decay of matrix coefficients. We refer the reader to \cite{maucourant_2007}, \cite{venkatesh_sparse_2010}, \cite{kontorovich_apollonian_2011}, \cite{howe_moore} for proofs. 

Let $\omega=1-\Omega_K\in U(\mathfrak g^\C).$

\addtocounter{section}{1}

\begin{lemma}\label{lem:tempered}Let $(\pi,V)$ be a tempered unitary representation of $G$. Then for any $K$-finite $w_1,w_2\in V$ $$\langle \pi(k_1 a_t k_2).w_1,w_2\rangle \ll t e^{-t} \sqrt{\dim \pi(K).w_1} \|w_1\|_{L^2} \cdot \sqrt{\dim \pi(K).w_2} \|w_2\|_{L^2} .$$ Furthermore, for any $w_1,w_2\in V$ we have $$\langle \pi(k_1 a_t k_2).w_1,w_2\rangle \ll t e^{-t}  \|\omega(w_1)\|_{L^2} \|\omega(w_2)\|_{L^2} .$$
\end{lemma}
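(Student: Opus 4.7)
The plan is to prove the two inequalities in sequence: first the $K$-finite estimate via Harish--Chandra's bound on the tempered spherical function, then the estimate for arbitrary vectors by summing the first bound over a $K$-isotypic expansion and applying Cauchy--Schwarz.

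First I would reduce to the case $k_1=k_2=e$. By bi-equivariance, $\langle \pi(k_1 a_t k_2).w_1,w_2\rangle = \langle \pi(a_t).\pi(k_2)w_1,\pi(k_1^{-1})w_2\rangle$, and the $K$-action preserves both $L^2$ norms and the dimensions $\dim\pi(K).w_i$; the problem thus reduces to bounding $|\langle \pi(a_t).w_1,w_2\rangle|$ for $K$-finite $w_1,w_2$. The classical tempered estimate (Cowling--Haagerup--Howe, based on Harish-Chandra) then yields
\[ |\langle \pi(a_t).w_1,w_2\rangle| \le \sqrt{\dim\pi(K).w_1}\sqrt{\dim\pi(K).w_2}\,\|w_1\|_{L^2}\|w_2\|_{L^2}\,\Xi(a_t), \]
where $\Xi$ is the Harish-Chandra $\Xi$-function of $G$. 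Since $\PSL(2,\C)$ has real rank one with $\rho=1$ in the normalization of \eqref{eq:norm}, evaluating $\Xi$ as a $K/M$-integral of the Poisson-type kernel gives $\Xi(a_t)\ll(1+t)e^{-t}$, with the factor of $t$ reflecting the logarithmic correction at the tempered endpoint. This establishes the first bound.

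For the second inequality I would expand $w_i=\sum_l w_{i,l}$ into $K$-isotypic components (with $w_{i,l}$ in the span of copies of the $(2l+1)$-dimensional irrep of $K$), further splitting each $w_{i,l}$ into pieces contained in single irreducible $K$-invariant subspaces so that $\dim\pi(K).w_{i,l}\le 2l+1$. Because $\omega=1-\Omega_K$ acts by the scalar $1+l(l+1)$ on the $l$-th isotypic component, $\|\omega w_i\|_{L^2}^2=\sum_l(1+l(l+1))^2\|w_{i,l}\|_{L^2}^2$. Applying the $K$-finite bound term-by-term gives
\[ |\langle \pi(k_1 a_t k_2).w_1,w_2\rangle| \ll te^{-t} \sum_{l,l'} \sqrt{(2l+1)(2l'+1)}\,\|w_{1,l}\|_{L^2}\|w_{2,l'}\|_{L^2}. \]
Inserting $(1+l(l+1))(1+l'(l'+1))$ in numerator and denominator and applying Cauchy--Schwarz then bounds the double sum by $\|\omega w_1\|_{L^2}\|\omega w_2\|_{L^2}$ times the convergent factor $\sum_l(2l+1)/(1+l(l+1))^2<\infty$.

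The hardest ingredient is the amplified tempered matrix coefficient bound with the precise $\Xi(a_t)\ll te^{-t}$ decay: both the dimensional amplification factor $\sqrt{\dim\pi(K).w_i}$ and the logarithmic correction at $\rho$ in the rank-one spherical function require the standard Harish-Chandra--Cowling computation, which I would import from the references cited in the excerpt (\cite{howe_moore}, \cite{venkatesh_sparse_2010}, \cite{kontorovich_apollonian_2011}) rather than reproduce.
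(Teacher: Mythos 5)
The paper does not actually prove this lemma: it is stated as a known result and the reader is referred to \cite{maucourant_2007}, \cite{venkatesh_sparse_2010}, \cite{kontorovich_apollonian_2011}, \cite{howe_moore}. Your proposal is the standard argument that those references contain --- reduce to $\langle\pi(a_t)w_1,w_2\rangle$ by bi-$K$-equivariance, invoke the Cowling--Haagerup--Howe bound by the Harish--Chandra function $\Xi$, and note that for $\PSL(2,\C)$ one has $\Xi(a_t)=t/\sinh t\ll(1+t)e^{-t}$ --- so the first part is fine and in the spirit of what the paper intends.

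There is one technical slip in your derivation of the second inequality. You split each isotypic component $w_{i,l}$ into pieces lying in single irreducible $K$-subspaces so as to use $\dim\pi(K).w\le 2l+1$, and then apply the $K$-finite bound ``term-by-term.'' But recombining those pieces produces $\sum_m\|w_{i,l,m}\|$, not $\|w_{i,l}\|$; by Cauchy--Schwarz this costs a factor of the square root of the multiplicity of the $(2l+1)$-dimensional irrep in $V$, and in the intended application $V=V_{\mathrm{temp}}\subset L^2(\Gamma\backslash G)$ that multiplicity is infinite, so the displayed intermediate inequality with $\sqrt{(2l+1)(2l'+1)}$ does not follow as written. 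The standard fix is to not split at all: any vector $w$ in the $l$-isotypic component generates a cyclic $K$-module of dimension at most $(2l+1)^2$ (a cyclic isotypic module is a sum of at most $\dim\tau$ copies of $\tau$), so the CHH bound applies directly to $w_{i,l}$ with $\sqrt{\dim\pi(K).w_{i,l}}\le 2l+1$. This replaces $\sqrt{(2l+1)(2l'+1)}$ by $(2l+1)(2l'+1)$ in your double sum, and the concluding Cauchy--Schwarz still closes since $\sum_l(2l+1)^2/(1+l(l+1))^2<\infty$. With that correction your argument is complete.
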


\begin{lemma}\label{lem:decay}Fix $1<s_0<2$. Let $(\pi,V)$ be a unitary representation of $G$ that does not weakly contain any complementary series representation $V_s$ for $s>s_0$. Then for any $K$-finite $w_1,w_2\in V$ $$\langle \pi(k_1 a_t k_2).w_1,w_2\rangle \ll  e^{-s_0t} \sqrt{\dim \pi(K).w_1} \|w_1\|_{L^2} \cdot \sqrt{\dim \pi(K).w_2} \|w_2\|_{L^2} .$$ Furthermore, for any $w_1,w_2\in V$ we have $$\langle \pi(k_1 a_t k_2).w_1,w_2\rangle \ll  e^{-s_0t}  \|\omega(w_1)\|_{L^2} \|\omega(w_2)\|_{L^2} .$$
\end{lemma}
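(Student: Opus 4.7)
The plan is first to establish the $K$-finite bound via direct integral decomposition and the spherical function asymptotics for complementary series, and then to extend to arbitrary vectors by the smoothing operator $\omega$. A preliminary reduction absorbs $k_1, k_2$: writing $\langle \pi(k_1 a_t k_2) w_1, w_2 \rangle = \langle \pi(a_t) \pi(k_2) w_1, \pi(k_1^{-1}) w_2 \rangle$, since $\pi(k)$ is unitary, preserves $K$-span dimensions, and commutes with $\omega$ (which is central in $U(\mathfrak{k}^\C)$), we may assume $k_1 = k_2 = e$ throughout.

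For the $K$-finite estimate, I would disintegrate $\pi = \int_X \pi_x \, d\mu(x)$ into irreducibles. The spectral hypothesis forces each $\pi_x$ to be either tempered or a complementary series with parameter at most $s_0$. Within a single irreducible $\pi_x$, each $K$-isotypic component has multiplicity one and dimension $2l + 1$, and the classical matrix coefficient estimate in an irreducible combined with the spherical function asymptotic on $\PSL(2,\C)$ yields
\[
|\langle \pi_x(a_t) v_{l_1}, v_{l_2} \rangle| \ll \sqrt{(2 l_1 + 1)(2 l_2 + 1)} \cdot \Xi_{s_0}(a_t) \cdot \|v_{l_1}\| \|v_{l_2}\|
\]
for $K$-isotypic vectors $v_{l_i}$ in $\pi_x$, where the spherical function $\Xi_{s_0}(a_t)$ of $V_{s_0}$ is computed explicitly in the normalization of the paper to have the desired exponential decay. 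Integrating over $X$, Cauchy-Schwarz, and the observation $\sum_{l : (w_i)_l \ne 0}(2l+1) \le \dim \pi(K).w_i$ then produce the first inequality for $K$-finite $w_1, w_2$.

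For arbitrary vectors, I decompose $w_i = \sum_{l \ge 0} (w_i)_l$ into $K$-isotypic components, noting that $\omega = 1 - \Omega_K$ acts on $(w_i)_l$ by the scalar $1 + l(l+1)$. Applying the per-$K$-isotype estimate from the first step gives
\[
|\langle \pi(a_t) w_1, w_2 \rangle| \le e^{-s_0 t} \sum_{l_1, l_2 \ge 0} \sqrt{(2 l_1 + 1)(2 l_2 + 1)} \, \|(w_1)_{l_1}\| \, \|(w_2)_{l_2}\|,
\]
and two more applications of Cauchy-Schwarz, combined with the convergence of $\sum_l (2l+1)(1 + l(l+1))^{-2}$, dominate the right side by a constant multiple of $e^{-s_0 t} \|\omega w_1\| \|\omega w_2\|$.

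The main obstacle is the per-irreducible matrix coefficient estimate with explicit $K$-type dimensional dependence: translating the abstract spectral hypothesis on $\pi$ into a pointwise majorization by $\Xi_{s_0}$ requires the Cowling-Haagerup-Howe principle, while the spherical function decay on $\PSL(2,\C)$ is a hypergeometric calculation. Both ingredients are classical, but the bookkeeping (especially consistency of normalization of $t$, the spherical function, and the Plancherel disintegration) is the place where the details need care; everything after that is routine integration.
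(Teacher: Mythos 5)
The paper does not prove this lemma at all --- it is stated as a known result with pointers to Maucourant, Venkatesh, Kontorovich--Oh, and Howe--Moore --- so there is no internal proof to compare against; your route (reduce to $k_1=k_2=e$ by unitarity, disintegrate into irreducibles, bound each fiber by a spherical function with $\sqrt{\dim}$ factors, then recover the Sobolev version by summing over $K$-types using that $\omega=1-\Omega_K$ acts by $1+l(l+1)$ on the $l$-isotypic piece) is exactly the standard argument in those references. The second half of your argument is complete: convergence of $\sum_l (2l+1)(1+l(l+1))^{-2}$ is all that is needed, and the conclusion survives even if the per-isotype constant is $(2l_1+1)(2l_2+1)$ rather than its square root (which is what the general CHH bound gives when a $K$-isotypic component of $\pi$ has multiplicity greater than one; inside each irreducible fiber multiplicity one saves you, as you note).

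Two points need attention. First, the exponent: the spherical function of $V_{s_0}$ in the paper's normalization decays like $e^{(s_0-2)t}$, not $e^{-s_0t}$, and for $1<s_0<2$ we have $e^{(s_0-2)t}\gg e^{-s_0t}$. The printed bound $e^{-s_0t}$ is not attainable (take $\pi=V_{s_0}$ itself, which satisfies the hypothesis), and the paper later applies the lemma in the form $\langle\pi(k_1a_tk_2)\Psi^{\perp},\Psi^{\perp}\rangle\ll e^{(s_1-2+\eps)t}\|\Psi^{\perp}\|_{W^{2,2}}^2$, confirming that $e^{(s_0-2)t}$ is the intended rate; your phrase ``the desired exponential decay'' papers over this, and you should state explicitly which exponent your argument delivers (the tempered fibers, contributing $te^{-t}$, are absorbed since $s_0>1$). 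Second, the per-fiber estimate is where the real work is, and quoting Cowling--Haagerup--Howe does not quite close it: CHH bounds matrix coefficients of $\pi_x$ by $\Xi(g)^{1/n}$ for the least integer $n$ with $\pi_x^{\otimes n}$ tempered, and $1/n$ is in general strictly smaller than $2-s_0$, so the exponent degrades. To get the sharp rate uniformly over all allowed fibers one needs the explicit complementary-series matrix coefficient bounds for $\PSL(2,\C)$, uniform in the $K$-types --- essentially what the paper's own later lemma on $\langle\pi(a_t)v_{ac},v_{a'c}\rangle$ provides, or the uniform pointwise bounds of Shalom and Oh. Finally, the disintegration step must also exclude the trivial representation (automatic in the application, where $\Gamma$ has infinite covolume), since otherwise there is no decay at all.
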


\chapter{Proof of Theorem \ref{th:main}}


\section{Setup.} Let $$N(T)=\sum_{\gamma\in\Gamma, |\gamma|<T}Y_{a'a;b'bc}(K(\gamma)).$$ Also define $f_T(g)$ on $G$ by \beq\label{eq:ftg}f_T(g)=Y_{a'a;b'bc}(K(g))\chi_{|g|<T}.\eeq Let $F_T\colon \Gamma\quot G\times\Gamma\quot G\to\C$ be defined by $$F_T(g,h)=\sum_{\gamma\in\Gamma}f_T(g^{-1}\gamma h).$$ Observe that $N(T)=F_T(e,e).$ 

For a fixed parameter $\eta$ (to be chosen later depending on $T$) let $\psi\colon G\to\R^+$ be a smooth function supported in a ball of radius $\eta$ about $e$ and having integral 1 with respect to the measure we fixed in \eqref{eq:haar}. Let $\Psi\colon \Gamma\quot G\to R^+$ be defined by $$\Psi(x)=\sum_{\gamma\in\Gamma}\psi(\gamma x).$$

Let $$H(T)=\int_{\Gamma\quot G}\int_{\Gamma\quot G}F_T(g,h)\Psi(h)\Psi(g)\,dh\, dg.$$

\begin{lemma}
\beq \label{eq:smoothing}|H(T)-N(T)|\ll (a+1)^{3/2}(a'+1)^{3/2}T^{2\delta}\eta.\eeq
\end{lemma}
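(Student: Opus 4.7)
The plan is to interpret $H(T)-N(T)$ as a discrepancy between an unsmoothed and smoothed lattice-point count, then control it by combining a Lipschitz estimate for the integrand away from the sphere $|g|=T$ with a thin-shell count near that sphere.

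First I would unfold. Because $F_T(g,h)$ is $\Gamma$-automorphic in each variable and $\Psi$ is the $\Gamma$-periodization of $\psi$, the usual unfolding collapses both outer integrals:
\[
H(T)=\int_G\int_G\psi(g)\psi(h)F_T(g,h)\,dg\,dh=\sum_{\gamma\in\Gamma}\int_G\int_G\psi(g)\psi(h)f_T(g^{-1}\gamma h)\,dg\,dh.
\]
Since $\int\psi=1$, writing $N(T)=\sum_\gamma f_T(\gamma)\cdot\iint\psi(g)\psi(h)\,dg\,dh$ gives
\[
H(T)-N(T)=\sum_{\gamma\in\Gamma}\int_G\int_G\psi(g)\psi(h)\bigl[f_T(g^{-1}\gamma h)-f_T(\gamma)\bigr]\,dg\,dh.
\]

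Next I would split the integrand into a smooth piece and a boundary piece:
\[
\bigl[Y_{a'a;b'bc}(K(g^{-1}\gamma h))-Y_{a'a;b'bc}(K(\gamma))\bigr]\chi_{|g^{-1}\gamma h|<T}+Y_{a'a;b'bc}(K(\gamma))\bigl[\chi_{|g^{-1}\gamma h|<T}-\chi_{|\gamma|<T}\bigr].
\]
For the smooth piece, submultiplicativity of $|\cdot|$ gives $|g^{-1}\gamma h|=e^{O(\eta)}|\gamma|$ when $g,h$ lie in the ball of radius $\eta$ around $e$. The Cartan projection $g\mapsto K(g)$ is uniformly Lipschitz on $G$ (for $|g|$ bounded away from $1$, which excludes only finitely many $\gamma$): this can be read off the explicit formulas in Section~\ref{sec:kak}, where the $\csch t$ coefficients multiplying the $K$-directional derivatives are bounded, exactly compensating the $\operatorname{Ad}(a(t))$-expansion in the $\mathfrak n$-directions. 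Together with $\|Y_{a'a;b'bc}\|_{L^\infty}\ll\sqrt{(a+1)(a'+1)}$ (from the addition formula) and a single $K$-derivative multiplying by at most $\max(a,a')$, I get $\|\nabla Y_{a'a;b'bc}\|_{L^\infty}\ll(a+1)^{3/2}(a'+1)^{3/2}$, so the bracket is $\ll(a+1)^{3/2}(a'+1)^{3/2}\eta$ uniformly in $\gamma$. Summing over $\{|\gamma|<T(1+O(\eta))\}$, whose cardinality is $\ll T^{2\delta}$ by Theorem~\ref{th:lax}, contributes $\ll(a+1)^{3/2}(a'+1)^{3/2}T^{2\delta}\eta$.

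For the boundary piece, the indicator difference vanishes unless $|\gamma|$ lies in a shell of multiplicative thickness $O(\eta)$ about $T$. Subtracting the two Lax--Phillips expansions (Theorem~\ref{th:lax}) at the endpoints bounds the shell count by $\ll T^{2\delta}\eta$ (in the regime where $\eta$ is not so small that the Lax--Phillips error dominates — otherwise the bound to prove is trivial). Bounding $|Y_{a'a;b'bc}|$ by its sup norm then gives a contribution $\ll\sqrt{(a+1)(a'+1)}T^{2\delta}\eta$, which is absorbed into the Lipschitz bound. Adding the two pieces yields \eqref{eq:smoothing}.

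The one genuinely delicate ingredient is the uniform Lipschitz property of $K(\cdot)$: a left- or right-invariant distance between $g^{-1}\gamma h$ and $\gamma$ blows up with $|\gamma|$ because of $\operatorname{Ad}(\gamma)$, so one cannot simply assert Lipschitz continuity of an arbitrary function on $G$. The point is that the bi-$K$-equivariant quantity $K(g)$ is much better behaved, and this is precisely what is visible in the $\csch t$ structure of Section~\ref{sec:kak}. Once this uniform Lipschitz estimate is in hand, the rest is a routine ball- and shell-count application of Lax--Phillips.
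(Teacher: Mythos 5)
Your argument is essentially the paper's proof: the same decomposition into a bulk term (controlled by the $C^1$ norm $\ll((a+1)(a'+1))^{3/2}$ times $\eta$ times the total count $\ll T^{2\delta}$) and a thin-shell term near $|\gamma|=T$ (controlled by the sup norm times the shell count from Theorem~\ref{th:lax}), with the same unfolding step. Your explicit justification of why the Cartan projection $K(\cdot)$ is uniformly Lipschitz under $\eta$-perturbations on both sides — the point the bound $|f_T(g^{-1}\gamma h)-f_T(\gamma)|\ll\eta\|Y\|_{C^1}$ silently relies on — is a detail the paper leaves implicit, and it is correctly identified and handled.
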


\begin{proof}
The left hand side of \eqref{eq:smoothing} is at most  \beq\sum_{\gamma\in\Gamma} \int_{g\in\Gamma\quot G}\int_{h\in\Gamma\quot G}|f_T(g^{-1}\gamma h)-f_T(\gamma)|\Psi(g)\Psi(h)dh\, dg.\label{eq:error}\eeq We distinguish three cases. 
\begin{enumerate}
 \item If $|\gamma|>\frac{T}{1-\eta}$, then both $f_T(g^{-1}\gamma h)$ and $f_T(\gamma)$ vanish.
\item If $\frac{T}{1-\eta}\ge |\gamma|>\frac T{1+\eta}$, then trivially $$|f_T(g^{-1}\gamma h)-f_T(\gamma)|\ll \|Y_{a'a;b'bc}\|_{L^\infty}\ll\sqrt{(a+1)(a'+1)}.$$
\item Finally if $|\gamma|\le \frac T{1+\eta}$, then $$|f_T(g^{-1}\gamma h)-f_T(\gamma)|\ll\eta \|Y_{a'a;b'bc}\|_{C^1}\ll \eta ((a+1)(a'+1))^{3/2}.$$
\end{enumerate}

Thus the error \eqref{eq:error} is at most $$\sqrt{(a+1)(a'+1)}\sum_{\frac{T}{1-\eta}\ge |\gamma|>\frac T{1+\eta}}1+\eta ((a+1)(a'+1))^{3/2}\sum_{|\gamma|\le \frac T{1+\eta}}1\ll\eta ((a+1)(a'+1))^{3/2}T^{2\delta}$$ by Lax-Phillips \cite{lax_phillips} and Theorem \ref{th:lax} above. Specifically, we use the leading term of their main theorem: $$\sum_{\gamma\in\Gamma,|\gamma|<T} 1=\const\cdot T^{2\delta} +O(T^{2\delta-\sigma}),$$ where $\sigma$ depends on the spectral gap for $L^2(\Gamma\quot G)$. 

\end{proof}

The previous Lemma allows us to work with $H$ rather than with $N$, and the next Lemma massages $H$ into a more pleasant form. 

\begin{lemma} We have 
 $$H(T)=\int_{g\in G} f_T(g)\langle \pi(g)\Psi,\Psi\rangle dg.$$
\end{lemma}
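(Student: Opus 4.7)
The identity is a standard unfolding computation; the core observation is that the outer integrals in the definition of $H(T)$ combine with the periodization to produce a matrix coefficient of the right regular representation. Recall that $\Psi$ is real and positive (since $\psi$ is), and that $\pi$ denotes the right regular representation, so $\pi(g)\Psi(x)=\Psi(xg)$.

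First I would fix $g \in \Gamma \backslash G$ and unfold the inner integral in $h$. Because $\Psi$ is $\Gamma$-invariant on the left and $F_T(g,h) = \sum_{\gamma \in \Gamma} f_T(g^{-1}\gamma h)$ is the $\Gamma$-periodization of $h \mapsto f_T(g^{-1}h)$, the usual unfolding identity gives
\[
\int_{\Gamma \backslash G} F_T(g,h)\,\Psi(h)\,dh \;=\; \int_{G} f_T(g^{-1}h)\,\Psi(h)\,dh.
\]
Next I would apply the left-invariance of Haar measure on $G$, substituting $h = g x$ (with $dh = dx$), so that $g^{-1}h = x$ and $\Psi(h) = \Psi(gx)$. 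This transforms the integral into
\[
\int_{G} f_T(x)\,\Psi(gx)\,dx \;=\; \int_{G} f_T(x)\,\bigl(\pi(x)\Psi\bigr)(g)\,dx.
\]

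Plugging this back into the definition of $H(T)$ and exchanging the order of integration (justified by the absolute convergence, since $f_T$ is bounded with compact support in $|g|<T$ and $\Psi$ is smooth and compactly supported modulo $\Gamma$), I obtain
\[
H(T) \;=\; \int_{G} f_T(x)\left(\int_{\Gamma \backslash G} \Psi(g)\,(\pi(x)\Psi)(g)\,dg\right)dx.
\]
The inner integral is precisely the matrix coefficient $\langle \pi(x)\Psi,\Psi\rangle$ taken in $L^2(\Gamma \backslash G)$, where we use that $\Psi$ is real. Renaming $x$ back to $g$ yields the claimed identity.

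I do not expect any real obstacle here: the only point requiring a little care is confirming the validity of the unfolding (which uses $\Gamma$-invariance of $\Psi$ and the fact that $f_T$ is compactly supported, so that everything in sight is absolutely integrable and Fubini applies).
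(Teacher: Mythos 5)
Your proof is correct and follows essentially the same unfolding argument as the paper: periodize, use $\Gamma$-invariance of $\Psi$ to unfold the $h$-integral to all of $G$, substitute via left-invariance of Haar measure, and apply Fubini to recognize the matrix coefficient. Your remark that the inner product reduces to the stated integral because $\Psi$ is real is a correct and worthwhile point of care.
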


\begin{proof}
We compute
$$\int_{g\in\Gamma\quot G}\int_{h\in\Gamma\quot G}F_T(g,h)\Psi(h)\Psi(g)dh\, dg=\sum_{\gamma\in\Gamma} \int_{g\in\Gamma\quot G}\int_{h\in\Gamma\quot G} f_T(g^{-1}\gamma h)\Psi(h)\Psi(g)dh\,dg.$$ In the inner integral substitute $x=g^{-1}\gamma h$, whence $h=\gamma g x$. Using $\Gamma$-invariance of $\Psi$ we get $$\sum_{\gamma\in\Gamma}\int_{g\in\Gamma\quot G}\int_{x\in g^{-1}\gamma(\Gamma\quot G)}f_T(x)\Psi(g x)\Psi(g)dx \,dg.$$ Now the result follows by combining the sum over $\gamma$ with the integral in $x$ and renaming the variables.

\end{proof}

\section{Spectral decomposition.} Write $\Psi=\Psi_0+\dots+\Psi_d+\Psi_{\text{temp}}$ with each $\Psi_n$ being the orthogonal projection onto $V_{s_n}$, and $\Psi_{\text{temp}}$ is the orthogonal projection onto $V_{\text{temp}}$. For $\Psi_{\text{temp}}$ we use decay of tempered matrix coefficients.  We have $$|\langle \pi(k_1a_tk_2)\Psi_{\text{temp}},\Psi_{\text{temp}}\rangle |\ll te^{-t}\|\Psi_{\text{temp}}\|_{W^{2,2}}^2$$ from Lemma \ref{lem:tempered}. The norm $W^{2,2}$ is the second order Sobolev norm; it includes the $L^2$ norm of the function as well as the $L^2$ norms of all derivatives of orders 1 and 2. Thus we get \beq\label{eq:tempered}\int_{g\in G} f_T(g)\langle \pi(g)\Psi_{\text{temp}},\Psi_{\text{temp}}\rangle dg\ll
\|Y_{a'b'cba}\|_{L^1}\|\Psi_{\text{temp}}\|_{W^{2,2}}^2 T^2\log T\ll
\eta^{-10}T^2\log T.\eeq  
Since complementary series representations of $G$ are irreducible, $\langle\pi(g)\Psi_{s_1},\Psi_{s_2}\rangle$ vanishes whenever $s_1$ and $s_2$ are distinct. For the same reason $\langle\pi(g)\Psi_{s_1},\Psi_{\text{temp}}\rangle=0$. 

To get the simple version of the Main Theorem \eqref{eq:simplemain} we use the decomposition $\Psi=\Psi_0+\Psi^\perp$, where $\Psi_0$ is the projection onto $V_{s_0}$ and $\Psi^\perp$ is the projection onto its orthogonal complement $V_{s_0}^\perp$. In this setting we can use Lemma \ref{lem:decay} to get $$|\langle \pi(k_1a_tk_2)\Psi^\perp,\Psi^\perp\rangle |\ll_\eps e^{(s_1-2+\eps)t}\|\Psi^\perp\|_{W^{2,2}}^2.$$ Then we have $$\int_{g\in G} f_T(g)\langle \pi(g)\Psi^\perp,\Psi^\perp\rangle dg\ll
\|Y_{a'b'cba}\|_{L^1}\|\Psi^\perp\|_{W^{2,2}}^2 T^{2s_1+\eps}\ll
\eta^{-10}T^{2s_1+\eps},  $$ and $\Psi_0$ contributes to the main term.

\section{$K$-type decomposition.} It  remains to consider the non-tempered part $$\sum_n\int_{g\in G}f_T(g)\langle \pi(g)\Psi_n,\Psi_n\rangle dg.$$ Expand $\Psi_n$ in the orthonormal basis $\{v_{lj}\}$ for the corresponding representation: $$\Psi_n=\sum_{lj}\langle \Psi_n,v_{lj}\rangle v_{lj}.$$ Here we use $v_{lj}\colon \Gamma\quot G\to\C$ to denote a basis in the automorphic model; note that matrix coefficients can be computed in any model, while evaluations $v_{lj}(g)$, $g\in G$ must be done in the automorphic model. In $KA^+K$ coordinates as in \eqref{eq:coordinates} and \eqref{eq:haar} we get 
\begin{multline}\label{eq:discarding}4\pi \sum_{l,j,l',j'}\int_{\substack{{K,M\quot K}\\{A^+,\Gamma\quot G}}}Y_{a'a;b'bc}(k_1k_2)\chi_{|a_t|<T}\langle \Psi_n,v_{lj}\rangle \langle v_{l'j'},\Psi_n\rangle \\ v_{lj}(xa_t k_2)\overline{v_{l'j'}(xk_1^{-1})}\, dx\, da_t\, dk_2\, dk_1.\end{multline} Now $Y_{a'b'cba}$ and $v_{lj}$ have the same $K$-transformation properties on the right. Using orthogonality of matrix coefficients for $K$ we get $$4\pi \overline{Y_{ac}(e)}Y_{a'c}(e)\frac{\langle \Psi_n,v_{ab}\rangle \langle v_{a'b'},\Psi_n\rangle}{(2a+1)(2a'+1)}\int_{t=0}^{2\log T}\langle \pi(a_t)v_{ac},v_{a'c}\rangle da_t.$$ Finally using the fact that $$\langle \Psi_n,v_{ab}\rangle =v_{ab}(e)+O(|\nabla v_{ab}(e)|\eta)$$ we get \begin{multline} 4\pi \overline{Y_{ac}(e)}Y_{a'c}(e)\frac{\overline{v_{ab}(e)}v_{a'b'}(e)}{(2a+1)(2a'+1)}\int_{t=0}^{2\log T}\langle \pi(a_t)v_{ac},v_{a'c}\rangle da_t+\\+O(\eta|\nabla v_{ab}(e)|\overline{Y_{ac}(e)}Y_{a'c}(e)\frac{v_{a'b'}(e)}{(2a+1)(2a'+1)}\int_{t=0}^{2\log T}\langle \pi(a_t)v_{ac},v_{a'c}\rangle da_t)+\\+O(\eta|\nabla v_{a'b'}(e)|\overline{Y_{ac}(e)}Y_{a'c}(e)\frac{\overline{v_{ab}(e)}}{(2a+1)(2a'+1)}\int_{t=0}^{2\log T}\langle \pi(a_t)v_{ac},v_{a'c}\rangle da_t)+O(\eta). \label{eq:mainterms}\end{multline} The error depends on Lipschitz norms of $v_{ab}$ and $v_{a'b'}$; we will control their dependence on the indices in an indirect way later. 

\begin{remark}
A similar expression can be found in Proposition 3.13 of \cite{bourgain_sector_2010}, but the significance of the Lipschitz norm is overlooked. In Lemma \ref{lem:lipschitz} we show how to control these norms, and the same treatment will work in the case of \cite{bourgain_sector_2010}. 
\end{remark}

\section{Main term.} In the computation of the main terms of the Theorem we deliberately avoid most complications by working out a skeletal form first and then filling in the constants as necessary. 
\begin{prop}\label{lem:skeletal} We have \begin{multline}\sum_{\substack{{\gamma\in\Gamma}\\{|\gamma|<T}}}Y_{a'b'}(k_1(\gamma))\overline{Y_{ab}(k_2^{-1}(\gamma))}=c_0T^{2\delta} +c_1T^{2s_1}+\dots+c_dT^{2s_d}+\\+O\left(T^{2\frac{10\delta+1}{11}}(\log T)^{1/11}(a+1)^{15/11}(a'+1)^{15/11}\right)+O(\dots)\end{multline} for certain constants $c_n=c_n(a,b,a',b'),$ $0\le n\le d$; three dots stand for the error terms of equation \eqref{eq:mainterms} with $\eta$ as in equation \eqref{eq:eta} below. For $c\ne 0$ we have  \begin{equation}\sum_{\substack{{\gamma\in\Gamma}\\{|\gamma|<T}}}Y_{a'a;b'bc}(K(\gamma))=O\left(T^{2\frac{10\delta+1}{11}}(\log T)^{1/11}(a+1)^{15/11}(a'+1)^{15/11}\right).\end{equation} 
\end{prop}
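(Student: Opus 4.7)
The plan is to stitch together the two preceding lemmas with the spectral decomposition of $\Psi$, apply $K$-type orthogonality to pin down a single surviving matrix-coefficient integral along $A^+$, and then balance the two sources of error by choosing $\eta$. Concretely, the first lemma already gives
\[
N(T) = H(T) + O\bigl((a+1)^{3/2}(a'+1)^{3/2}T^{2\delta}\eta\bigr),\qquad H(T)=\int_G f_T(g)\langle \pi(g)\Psi,\Psi\rangle\,dg,
\]
and the decomposition $\Psi=\Psi_0+\dots+\Psi_d+\Psi_{\mathrm{temp}}$ splits $H(T)$ into non-tempered pieces plus a tempered piece. Lemma~\ref{lem:tempered} controls the tempered contribution by $O(\eta^{-10}T^2\log T)$ as in \eqref{eq:tempered}, which will be absorbed by the final error once $\eta$ is chosen.

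For each fixed $n$, I expand $\Psi_n=\sum_{l,j}\langle \Psi_n,v_{lj}\rangle v_{lj}$ in the orthonormal $K$-type basis of $V_{s_n}$ and integrate in $KA^+K$ coordinates as in \eqref{eq:discarding}. The vectors $v_{lj}$ carry $M$-character $j$, while $Y_{a'a;b'bc}$ carries $M$-character $c$ on each side; orthogonality of $K$ matrix coefficients therefore collapses the double sum over $(l,j,l',j')$ to the single tuple $(a,c,a',c)$. Approximating $\langle\Psi_n,v_{lj}\rangle$ by $v_{lj}(e)$ at cost $O(\eta|\nabla v_{lj}(e)|)$ gives exactly the expression already displayed in \eqref{eq:mainterms},
\[
4\pi\,\overline{Y_{ac}(e)}Y_{a'c}(e)\,\frac{\overline{v_{ab}(e)}v_{a'b'}(e)}{(2a+1)(2a'+1)}\int_0^{2\log T}\langle \pi(a_t)v_{ac},v_{a'c}\rangle\,\sh^2 t\,dt,
\]
up to the Lipschitz errors indicated there.

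The case $c\neq 0$ is then immediate: $Y_{ac}$ is an ordinary spherical harmonic and $Y_{ac}(e)$ is its value at $\phi=\theta=0$, which vanishes by the associated-Legendre identity $P_a^{|c|}(1)=0$ for $c\neq 0$. Hence the entire bracketed main term is zero, and only the error remains, yielding the second display of the proposition. For $c=0$, the matrix coefficient $\langle\pi(a_t)v_{a0},v_{a'0}\rangle$ is explicit via Lemma~\ref{lem:generalform}, and the standard asymptotic for complementary-series matrix coefficients gives $\langle\pi(a_t)v_{a0},v_{a'0}\rangle \sim C_n(a,a')\,e^{(s_n-2)t}$ as $t\to\infty$; together with $\sh^2 t\sim\tfrac14 e^{2t}$ the integral contributes $\frac{C_n(a,a')}{2s_n}T^{2s_n}+O(1)$. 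Summing over $n=0,\dots,d$ produces $c_0T^{2\delta}+c_1T^{2s_1}+\dots+c_dT^{2s_d}$.

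It remains to balance $\eta$. Equating the smoothing error $(a+1)^{3/2}(a'+1)^{3/2}T^{2\delta}\eta$ with the tempered error $\eta^{-10}T^2\log T$ gives $\eta^{11}=T^{2-2\delta}\log T/((a+1)(a'+1))^{3/2}$, and both errors then become $\ll (a+1)^{15/11}(a'+1)^{15/11}T^{2(10\delta+1)/11}(\log T)^{1/11}$, matching the claimed exponent exactly. The main technical point I expect to require care is the matrix-coefficient asymptotic \emph{uniformly} in $(a,a')$ together with the bookkeeping of constants produced by $K$-orthogonality; everything else is either definitional or a direct application of the lemmas already in place.
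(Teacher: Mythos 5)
Your proposal follows essentially the same route as the paper: smoothing, spectral decomposition with the tempered bound \eqref{eq:tempered}, $K$-type orthogonality collapsing to \eqref{eq:mainterms}, the matrix-coefficient asymptotic along $A^+$, and the identical optimization $\eta^{11}=T^{2-2\delta}\log T\,((a+1)(a'+1))^{-3/2}$. The only (harmless) divergence is the case $c\neq 0$: you kill the main term via the vanishing of the prefactor $Y_{ac}(e)$ at the identity, whereas the paper instead carries the extra factor $e^{-|c|t}$ through the matrix-coefficient lemma so that $\int_0^{2\log T}e^{(s-|c|)t}\,dt\ll T^{2(s-1)}$ is absorbed into the error --- both mechanisms are valid and lead to the same conclusion.
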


\begin{lemma}
With notation as before the asymptotics for matrix coefficients of $(\pi_s,V_s)$ are given by $$\langle \pi(a_t)v_{ac}^s,v_{a'c}^s\rangle=\mbox{\const} (e^{t(s-2)-|c|t}+O((a^2+a'^2+1)e^{-st-|c|t})).$$
\end{lemma}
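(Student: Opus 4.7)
Work directly in the line model and use the intertwining operator $I\colon V_s\to V_{2-s}$ from \eqref{eq:intertwining} to convert the inner product in $V_s$ into an ordinary $L^2$-pairing. Since $\pi_s(a_t).f(z)=e^{st}f(e^t z)$ and $I.v_{a'c}^{s}=c(a',s)\,v_{a'c}^{2-s}$ with $c(a',s)=(-1)^{a'}\pi\Gamma(s-1)^2/(\Gamma(a'+s)\Gamma(s-a'-1))$, the matrix coefficient equals
\[
\langle\pi(a_t)v_{ac}^s,v_{a'c}^s\rangle
=\overline{c(a',s)}\int_\C e^{st}\,v_{ac}^s(e^t z)\,\overline{v_{a'c}^{2-s}(z)}\,\tfrac{i}{2}dz\wedge d\bar z.
\]
The angular factors $e^{ic\alpha}$ and $e^{-ic\alpha}$ cancel after integrating $\alpha\in[0,2\pi)$, leaving a radial integral. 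After substituting $u=e^t r$ with $dz\wedge d\bar z/(2i)=r\,dr\,d\alpha$, the Jacobian produces exactly the prefactor $e^{(s-2)t}$:
\[
\langle\pi(a_t)v_{ac}^s,v_{a'c}^s\rangle
=2\pi\,\overline{c(a',s)}\,e^{(s-2)t}\int_0^\infty v_{ac}^s(u)\,v_{a'c}^{2-s}(e^{-t}u)\,u\,du.
\]

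\textbf{Extracting the main term.} Lemma~\ref{lem:generalform} shows that near $r=0$ one has $v_{a'c}^{2-s}(r)=C_0\,r^{|c|}(1+O(r^2))$, where $C_0$ is the value of the $k=a'-|c|$ term in the summation (times the normalization) and the $O(r^2)$ correction has polynomial coefficients in $a'$ coming from the remaining summands. Plugging in $r=e^{-t}u$ yields the principal contribution
\[
2\pi\,\overline{c(a',s)}\,C_0\,e^{(s-2)t-|c|t}\int_0^\infty v_{ac}^s(u)\,u^{|c|+1}\,du,
\]
which accounts for the leading asymptotic $e^{t(s-2)-|c|t}$. The remaining $u$-integral is a finite constant depending on $a,c,s$, absorbed into the overall prefactor.

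\textbf{Controlling the error.} The difference $v_{a'c}^{2-s}(e^{-t}u)-C_0(e^{-t}u)^{|c|}$ must be estimated for all $u>0$. I would split the integral at $u=e^t$. On $\{u\le e^t\}$, where $e^{-t}u\le 1$, a Taylor expansion of the rational function $v_{a'c}^{2-s}$ of Lemma~\ref{lem:generalform} gives a bound of the form $O\bigl((a'^2+1)(e^{-t}u)^{|c|+2}(1+(e^{-t}u)^2)^{-2+s-a'}\bigr)$; integrated against $v_{ac}^s(u)u$ this produces an $e^{-(|c|+2)t}$ factor, smaller than the claimed error. On $\{u>e^t\}$, the decay $v_{ac}^s(u)=O(u^{-|c|-2s})$ as $u\to\infty$ (visible from the Lemma~\ref{lem:generalform} formula after expanding $(r^2+1)^{-s-a}$) together with the polynomial bound $v_{a'c}^{2-s}(e^{-t}u)\ll (e^{-t}u)^{a'+|c|-\text{something}}$ pinches the tail and yields a contribution of order $(a^2+1)e^{-st-|c|t}$ after a direct size estimate. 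Combining both pieces gives the claimed $O\bigl((a^2+a'^2+1)e^{-st-|c|t}\bigr)$ error.

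\textbf{Main obstacle.} The quantitative difficulty is not the qualitative asymptotic but tracking the $a,a'$ dependence. The explicit summation defining $v_{lj}^s$ in Lemma~\ref{lem:generalform} contains $O(l)$ terms with binomial coefficients, and one must verify that the normalization from Lemma~\ref{lem:line_normalization} combined with the Taylor remainder produces precisely a quadratic dependence in $a,a'$ in the error, rather than an exponential one. This amounts to a bookkeeping estimate on the polynomial $P_{a'c}$ appearing in $(1+r^2)^{-2+s-a'}P_{a'c}(r^2)$, comparing the leading coefficient (which enters $C_0$) with the next one (which controls the bracket in the error). Once this is in hand, the splitting at $u=e^t$ delivers the bound.
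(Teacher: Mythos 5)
Your setup coincides with the paper's: both proofs work in the line model, use the intertwining operator to turn the $V_s$-pairing into an $L^2$-pairing against $v_{a'c}^{2-s}$, and reduce to a radial integral with the prefactor $e^{(s-2)t}$. Where you diverge is in how that integral is evaluated. The paper substitutes $u=r^2$ and then $w=(u+1)/(ue^{2t}+1)$, which converts the integrand into a \emph{finite} sum of explicit powers of $w$ on $[e^{-2t},1]$; every term can then be integrated exactly, the main term $\const\cdot e^{(s-2)t-|c|t}$ is read off, and the secondary terms are all visibly of size $O((a^2+a'^2+1)e^{-st-|c|t})$. You instead Taylor-expand $v_{a'c}^{2-s}$ at the origin and split the $u$-integral at $u=e^t$. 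This is workable, but two points need attention. First, your claim that the $\{u\le e^t\}$ remainder ``produces an $e^{-(|c|+2)t}$ factor, smaller than the claimed error'' implicitly treats $\int_0^{e^t}v_{ac}^s(u)u^{|c|+3}\,du$ as bounded; it is not --- the integrand decays like $u^{3-2s}$ with $s<2$, so this integral grows like $e^{(4-2s)t}$, and the piece lands \emph{exactly} at the claimed order $e^{-st-|c|t}$ rather than below it (fortunately this does not break the statement, but the reasoning as written is wrong). Second, the uniform bound $O((a'^2+1)r^{|c|+2})$ for the Taylor remainder over all $r=e^{-t}u\le 1$ is not automatic: the coefficient of $r^{|c|+2m}$ in the polynomial of Lemma \ref{lem:generalform} grows roughly like $(a')^{2m}$ relative to the leading one, so the naive term-by-term bound is only quadratic in $a'$ for $r\lesssim 1/a'$; for $r$ near $1$ one must exploit the factor $(1+r^2)^{-(2-s)-a'}$ or sum the polynomial exactly. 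You correctly identify this bookkeeping as the main obstacle and leave it open; the paper's change of variables is precisely the device that disposes of it by making every term explicit, so to complete your route you would need to carry out the estimate you defer, or fall back on the paper's substitution.
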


\begin{proof}
Without loss of generality take $c\ge 0$. From Lemma \eqref{lem:generalform} we have  $$v_{ac}(r,\alpha)= P_{2a-c,c}(r)(1+r^2)^{-s-a}$$ for a polynomial $P_{2a-c,c}$. The powers appearing in the polynomial have the same parity; the lowest and the highest powers are $c$ and $2a-c$, respectively. Combining $P_{2a-c,c}(re^t)$ and $P_{2a'-c,c}(r)$ we can write the matrix coefficient as $$e^{ts}\int_{r=0}^\infty \frac{Q_{a+a'-c,c}(r^2;e^t)}{{(r^2e^{2t}+1)^a(r^2+1)^{a'}}}\left(\frac{r^2+1}{r^2e^{2t}+1}\right)^{s-2}\frac{rdr}{(r^2e^{2t}+1)^2}.$$ $Q$ is a polynomial with indicated maximal and minimal powers of $r^2$, and its coefficients depend on $e^t$ as inherited from the product $P_{2a-c,c}(re^t)\cdot P_{2a'-c,c}(r)$. It is natural to substitute $u=r^2$, followed by $$w=\frac{u+1}{ue^{2t}+1}.$$ These give \begin{multline}e^{ts}\int_{w=1}^{e^{-2t}}  \frac{Q_{a+a'-c,c}\left(\frac{1-w}{we^{2t}-1};e^t\right)}{\left(\frac{1-e^{-2t}}{w-e^{-2t}}\right)^a\left(\frac{e^{2t}-1}{we^{2t}-1}\right)^{a'} w^{a'}} w^{s-2}\frac{dw}{1-e^{2t}}=\\=\frac{e^{ts}}{(e^{2t}-1)^{a+a'+1}}\int_1^{e^{-2t}}w^{s-2-a'} Q_{a+a'-c,c}\left(\frac{1-w}{we^{2t}-1};e^t\right)(we^{2t}-1)^{a+a'}dw\end{multline} ignoring absolute constants. Now the integrand is a finite sum of powers of $w$ and can be easily evaluated once $Q$ is given. In particular it follows that the main term is of the form $\const\cdot e^{ts-2t-ct}$ and that the next highest term is $O((a^2+a'^2+1)e^{-st-ct})$ from \ref{lem:generalform}, as desired.
\end{proof}

\begin{proof}[Proof of Proposition \ref{lem:skeletal}]
It is enough to extract the correct power of $T$ from  the integral $$\int_{t=0}^{2\log T}\langle \pi(a_t)v_{ac},v_{a'c}\rangle da_t$$ from \eqref{eq:mainterms} and to collect the error terms. Using the previous Lemma and the Haar measure on $A^+$ from \eqref{eq:haar_a} we have that $$\int_{t=0}^{2\log T}\langle \pi(a_t)v_{ac},v_{a'c}\rangle \sh^2 t\, dt=\int_{t=0}^{2\log T}e^{ts-|c|t}dt + O(T^{2(2-s)}).$$
Thus we get 
\beq\label{eq:leading}\sum_{n=0}^d c_n T^{2s_n}\eeq for $c$ different from $0$ as in the statement. 

The error term in the Theorem comes from the error contributions \eqref{eq:smoothing}, \eqref{eq:mainterms}, and \eqref{eq:tempered}. 

The optimal choice for $\eta$ that makes the errors from \eqref{eq:smoothing} and \eqref{eq:tempered} equal: \beq\label{eq:eta}\eta=T^{\frac2{11}(1-\delta)}(\log T)^{1/11}(a+1)^{-\frac3{22}}(a'+1)^{-\frac3{22}}.\eeq This gives the advertised error term in the statement. 
\end{proof}


We outline the calculation of the main term for the case $a=b=a'=b'=0$. Plugging these values into \eqref{eq:mainterms} gives $$4\pi {\overline{v_{00}(e)}v_{00}(e)}\int_{t=0}^{2\log T}\langle \pi(a_t)v_{00},v_{00}\rangle da_t.$$  We have $$v_{00}(e)=\hat\nu(0,0).$$ For the matrix coefficient we have $$\langle\pi(a_t) v_{00},v_{00}\rangle=\frac{\sh(t\delta-t)}{(\delta-1)\sh t},$$ which is computed in the line model using Lemma \ref{lem:line_normalization}. The leading term (for large $t$) of this expression is $$\frac{e^{t\delta-2t}}{\delta-1}.$$ Using the measure from \eqref{eq:haar_a} for the integral we get the leading term of $$\frac{\pi}{\delta(\delta-1)}\hat\nu(0,0)\overline{\hat\nu(0,0)} T^{2\delta},$$ as needed. 

\section{Conjugation invariance.} Generalizing this computation seems very computationally intensive, so we use a trick to show that once the Theorem holds for the case of trivial spherical harmonics we have just considered, it holds in all cases. For $g\in G$, the spectral theory of $\Gamma\quot G$ and $g^{-1}\Gamma g\quot G$ is the same. So we consider the discrete group $g^{-1}\Gamma g$ and ``linearize'' in $g$. In essence we need to prove a multiplicity one statement for the coefficient of the main term by exploiting the transformation property under conjugation. We proceed inductively. The base case (constant spherical harmonics) was outlined above. The induction step is built on the next proposition. 

Write $\inv\colon G\to G$ for the inverse map. For functions $F_1, F_2, F_3$ on $K/M$, $A^+$, and $M\quot K$, respectively, let $$F_1\otimes F_2\otimes F_3(g)=F_1(k_1(g))F_2(a(g))F_3(k_2(g)).$$

\begin{prop}\label{prop:jumping}
Let $Y_{aa}$ and $Y_{a'a'}$ be a pair spherical harmonics, and suppose by induction that 
$$\lim_{T\to\infty}\frac1{T^{2\delta}}\sum_{\gamma\in\Gamma}Y_{a'a'}\otimes\chi_T\otimes \overline{Y_{aa}\circ\inv}(\gamma)=\frac{\pi}{\delta(\delta-1)}\hat\nu_\Gamma(a',a')\overline{\hat\nu_\Gamma(a,a)}.$$ Then \begin{multline}\lim_{T\to\infty}\frac1{T^{2\delta}}\sum_{\gamma\in\Gamma}\left(Y_{a'a'}\otimes\chi_T\otimes \overline{Y_{a+1,a+1}\circ\inv}(\gamma)+Y_{a'+1,a'+1}\otimes\chi_T\otimes \overline{Y_{aa}\circ\inv}(\gamma)\right)=\\=\frac{\pi}{\delta(\delta-1)}\left(\hat\nu_\Gamma(a',a')\overline{\hat\nu_\Gamma(a+1,a+1)}+\hat\nu_\Gamma(a'+1,a'+1)\overline{\hat\nu_\Gamma(a,a)}\right).\end{multline}
\end{prop}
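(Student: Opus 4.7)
The plan is to exploit the conjugation-invariance of the spectral setup: for any $g\in G$, the group $\Gamma_g:=g^{-1}\Gamma g$ is geometrically finite with the same critical exponent $\delta$ as $\Gamma$ and has an isomorphic right regular representation on $L^2(\Gamma_g\quot G)$, so the induction hypothesis applies verbatim to $\Gamma_g$. Taking $g=g_\epsilon:=\exp(\epsilon X)$ for a suitable $X\in\mathfrak g$, expanding the resulting identity to first order in $\epsilon$, and comparing coefficients should produce the desired jump $a\mapsto a+1$, $a'\mapsto a'+1$.

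First I would apply the induction hypothesis to $\Gamma_\epsilon$. After the change of variables $\gamma\mapsto g_\epsilon^{-1}\gamma g_\epsilon$ the left-hand side becomes
\[\frac{1}{T^{2\delta}}\sum_{\gamma\in\Gamma}f_T(g_\epsilon^{-1}\gamma g_\epsilon),\qquad f_T:=Y_{a'a'}\otimes\chi_T\otimes\overline{Y_{aa}\circ\inv}.\]
The zeroth-order Taylor term in $\epsilon$ recovers the induction hypothesis applied directly to $\Gamma$; the first-order term equals $\sum_{\gamma\in\Gamma}(D_X f_T)(\gamma)$, where $D_X=X_L-X_R$ is a first-order differential operator on $G$ expressible in the $KA^+K$ coordinates by means of \eqref{eq:formulae}--\eqref{eq:formulae_last}.

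The crucial step is to choose $X$ so that $D_X$ acts on $Y_{a'a'}$ (through the left-invariant piece acting on the $k_1$-variable) and on $\overline{Y_{aa}\circ\inv}$ (through the right-invariant piece acting on the $k_2$-variable) exactly as the jumping operator $J^+=f+\I if$ of \eqref{eq:constantscl} acts on $K$-types. Since $J^+$ lies in $\mathfrak g^\C$ rather than $\mathfrak g$, one runs the argument separately for $X=f$ and $X=if$ and forms the appropriate complex linear combination. The resulting first-order term on the left-hand side is, by \eqref{eq:constantscl},
\[\const\cdot\sum_{\gamma\in\Gamma}\bigl(Y_{a'a'}\otimes\chi_T\otimes\overline{Y_{a+1,a+1}\circ\inv}+Y_{a'+1,a'+1}\otimes\chi_T\otimes\overline{Y_{aa}\circ\inv}\bigr)(\gamma)\]
modulo $K$-types of strictly lower $l$-index (whose asymptotics are already given by the induction hypothesis) and a boundary contribution from differentiating $\chi_T(a(g_\epsilon^{-1}\gamma g_\epsilon))$.

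On the right-hand side, the Patterson-Sullivan measure transforms under conjugation as $\nu_{\Gamma_\epsilon}=e^{-\delta\beta_\xi(g_\epsilon j,j)}(g_\epsilon^{-1})_*\nu_\Gamma$, and its Fourier coefficients $\hat\nu_{\Gamma_\epsilon}(a',a')\overline{\hat\nu_{\Gamma_\epsilon}(a,a)}$ admit a Taylor expansion in $\epsilon$ whose first-order part under the same complex combination reproduces the pairing of $\nu_\Gamma$ against the $J^+$-image of $Y_{a'a'}$ and of $\overline{Y_{aa}\circ\inv}$. This yields precisely $\hat\nu_\Gamma(a'+1,a'+1)\overline{\hat\nu_\Gamma(a,a)}+\hat\nu_\Gamma(a',a')\overline{\hat\nu_\Gamma(a+1,a+1)}$ up to the same constants and lower-weight contributions as on the left. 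Equating first-order coefficients and using the induction hypothesis to cancel the lower-weight terms completes the proof. The principal obstacle I expect is the explicit evaluation of the boundary contribution from the moving truncation $\chi_T\circ a\circ(\gamma\mapsto g_\epsilon^{-1}\gamma g_\epsilon)$: its $\epsilon$-derivative is a distribution supported on $|\gamma|\approx T$ producing a term of order $T^{2\delta}$, which must be computed via Lax--Phillips and matched against the corresponding first-order variation of the main term on the right-hand side before it can be absorbed into the identity.
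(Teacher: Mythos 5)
Your proposal follows essentially the same route as the paper: conjugate by $\exp(\eps X)$, linearize in $\eps$, realize the jump $a\mapsto a+1$ via the operator $J^+$ acting through the $KA^+K$ coordinate formulas, and match against the first-order variation of the Patterson--Sullivan measure (Poisson-kernel Jacobian plus pushforward). The one step you flag as an obstacle --- the boundary contribution from differentiating the sharp cutoff --- is resolved in the paper exactly as you predict: the cutoff is pre-smoothed to linear ramps $\Xi_{T,A}^{\pm}$, and the resulting difference quotient $-\frac{T}{2A}(\chi_{T+A}-\chi_T)$ is shown via the Lax--Phillips asymptotic to have the same main term as $-\delta\,\chi_T$, which combines with the $-a$ from the angular derivatives to match the $-(a+\delta)$ arising on the measure side.
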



\begin{remark}In effect we suppose that the main term of Theorem \ref{th:main} for one pair of spherical harmonics and for every group $\Gamma$, and prove that the main term is right for a new combination of pairs of spherical harmonics. 
\end{remark}

We need several lemmas to prove this Proposition.

\begin{lemma}
Let $\nu_{\Gamma}(z)$ be the Patterson-Sullivan measure for $\Gamma$ and let $g\in G$. Then, the Patterson-Sullivan measure for the conjugate of $\Gamma$ is $$\nu_{g^{-1}\Gamma g}(z)=P^\delta(gj,gz)\nu_{\Gamma}(gz). $$
\end{lemma}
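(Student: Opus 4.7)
The plan is to invoke the uniqueness of the Patterson--Sullivan measure recalled in Section~\ref{sec:ps}: up to a positive scalar, $\nu_\Gamma$ is the unique finite measure on $\d\H^3$ satisfying $d\gamma_\ast\nu_\Gamma/d\nu_\Gamma(\xi)=e^{-\delta\beta_\xi(\gamma j,j)}$ for every $\gamma\in\Gamma$. So I would verify that the proposed measure $\tilde\nu(z):=P^\delta(gj,gz)\,\nu_\Gamma(gz)$ satisfies the analogous transformation law for $g^{-1}\Gamma g$ based at $j$, and then pin the scalar down via the $L^2$ normalization in \eqref{eq:ps_poisson}.

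First I would check that the na\"ive pullback $g^{-1}_\ast\nu_\Gamma(z):=\nu_\Gamma(gz)$ is already a $g^{-1}\Gamma g$-conformal density of dimension $\delta$, but based at the \emph{wrong} point $g^{-1}j$ rather than $j$. For $\gamma'=g^{-1}\gamma g$ this is
\[
\gamma'_\ast g^{-1}_\ast\nu_\Gamma=g^{-1}_\ast\gamma_\ast\nu_\Gamma
\]
together with the transformation law of $\nu_\Gamma$ and the isometry invariance $\beta_{gz}(\gamma j,j)=\beta_z(g^{-1}\gamma j,g^{-1}j)=\beta_z(\gamma' g^{-1}j,g^{-1}j)$ of the Busemann function.

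To shift the base point from $g^{-1}j$ to $j$ I would then multiply by $e^{-\delta\beta_z(j,g^{-1}j)}$, and the key identification is that this factor equals $P^\delta(gj,gz)$: indeed, in the ball-model conventions of \eqref{eq:poisson_kernel} one has $P(x,\eta)=e^{-\beta_\eta(x,j)}$, so another application of isometry invariance gives
\[
P(gj,gz)=e^{-\beta_{gz}(gj,j)}=e^{-\beta_z(j,g^{-1}j)}.
\]
Consequently $\tilde\nu$ satisfies the defining transformation law for $g^{-1}\Gamma g$ based at $j$, and by uniqueness $\tilde\nu=c\,\nu_{g^{-1}\Gamma g}$ for some $c>0$.

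Finally, to see that $c=1$ I would observe that $x\mapsto gx$ descends to a hyperbolic isometry $g^{-1}\Gamma g\backslash\H^3\to\Gamma\backslash\H^3$ sending the base eigenfunction $\phi_0^{g^{-1}\Gamma g}$ to $\phi_0^\Gamma$ and preserving $L^2$ norms; since the normalization in \eqref{eq:ps_poisson} is fixed by the $L^2$ norm of this eigenfunction, the two Patterson--Sullivan measures are consistently normalized. The only real obstacle is keeping the Busemann and Poisson sign conventions straight; the logical content is a single application of the Patterson--Sullivan uniqueness theorem.
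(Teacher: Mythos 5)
Your argument is correct and is exactly the expansion of the paper's one-line proof (``the measure for the conjugated group is a combination of a push-forward with a Jacobian''): the pullback $\nu_\Gamma(g\,\cdot)$ is a $\delta$-conformal density for $g^{-1}\Gamma g$ based at $g^{-1}j$, the factor $P^\delta(gj,gz)=e^{-\delta\beta_{gz}(gj,j)}=e^{-\delta\beta_z(j,g^{-1}j)}$ is precisely the Jacobian that rebases it at $j$, and uniqueness together with the $L^2$-normalization of the base eigenfunction fixes the scalar. Nothing further is needed.
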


\begin{proof}
This is immediate from the properties of the Patterson-Sullivan measure: the measure for the conjugated group is a combination of a push-forward with a Jacobian. 
\end{proof}

\begin{lemma}\label{lem:action_on_poisson}
We have $$\left.P^\delta (J^+j; \phi,\theta)\right|_{e\in G} = -\delta\sin\theta e^{i\phi}.$$ 
\end{lemma}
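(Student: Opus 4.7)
The lemma is a first-order Lie-algebra computation: since $J^+=f+\I\cdot if$ is a $\C$-linear combination of elements of $\mathfrak g$, we have $(J^+.P^\delta)(e)=(f.P^\delta)(e)+\I(if.P^\delta)(e)$, where $P^\delta$ is regarded as the function $g\mapsto P^\delta(gj;\phi,\theta)$ on $G$. Each of these two terms is simply a directional derivative of the Poisson kernel at the basepoint $j\in\H^3$, so the proof is a direct calculation.

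My plan is to work in the upper-half-space model, where the Poisson kernel has the convenient closed form
$$P(x_1+ix_2+jy;z)=\frac{y(1+|z|^2)}{|z-(x_1+ix_2)|^2+y^2},\qquad P(j;z)=1,$$
and to carry out three steps. First, I would compute how $f$ and $if$ infinitesimally move $j$ under the quaternionic M\"obius action: $\exp(tf)\cdot j=j(tj+1)^{-1}=t+j+O(t^2)$ gives the tangent vector $\partial_{x_1}$ at $j$, while $\exp(t\cdot if)\cdot j=j(1+tk)^{-1}$, after using $jk=i$, gives a tangent vector in the $\mp x_2$-direction. Second, I would differentiate $P^\delta$ at $j$ to read off $\partial_{x_1}P^\delta|_j=2\delta\Re z/(1+|z|^2)$ and $\partial_{x_2}P^\delta|_j=2\delta\Im z/(1+|z|^2)$. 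Combining yields
$$(J^+.P^\delta)(e)=\frac{2\delta\,\bar z}{1+|z|^2},$$
where the appearance of $\bar z$ comes from pairing $\I$ with the imaginary-direction derivative.

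Third, I would translate the UHS boundary coordinate $z\in\C$ into the Euler angle coordinate $(\phi,\theta)$ via the Cayley-type isomorphism $z\mapsto(z-j)(-jz+1)^{-1}$ of the paper, extended to boundaries. Solving for $z$ in terms of the ball boundary point $(-\cos\phi\sin\theta,-\sin\phi\sin\theta,\cos\theta)$ gives $|z|^2=\cot^2(\theta/2)$ and an explicit phase so that (with the paper's conventions) $z=-\cot(\theta/2)e^{-i\phi}$; then $1+|z|^2=\csc^2(\theta/2)$, and substitution together with the double-angle identity $2\sin(\theta/2)\cos(\theta/2)=\sin\theta$ collapses the expression to $-\delta\sin\theta\,e^{i\phi}$.

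The calculation itself is elementary; the one delicate point is bookkeeping of sign conventions (the order of quaternion multiplication in the M\"obius action, the orientation of the Euler parametrization versus standard spherical coordinates on $\partial\B^3$, and the identification of $\I$ with the complex structure on $V_\delta$). Tracking these carefully is the only real obstacle, and each can be fixed by consistency with the known formulas for $\pi(J^+)$ on the line model $K$-types given in \eqref{eq:Jcoordinates1} and \eqref{eq:Jpaction}.
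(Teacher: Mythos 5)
Your strategy is sound and is, at bottom, the same computation as the paper's, but carried out in a genuinely different coordinate system. The paper's proof is a one-line appeal to a direct calculation: it applies the explicit $KA^+K$-coordinate expressions for $f$ and $\If$ from \eqref{eq:formulae}--\eqref{eq:formulae_last} (as vector fields in $\phi,\theta,\psi,t,\theta_2,\phi_2$) to the Euler-angle form of the Poisson kernel \eqref{eq:poisson_kernel} with $r=\th\frac t2$, and evaluates at $e$. That route needs no change of model and no Cayley transform, since everything is already expressed in the coordinates in which the answer is stated; the price is that it leans on the long formulae of Section \ref{sec:kak}. Your route -- differentiating the quaternionic M\"obius action at $j$ in the upper half-space model, where $\nabla P^\delta$ is trivial to compute, and then transporting to Euler angles -- is more self-contained and conceptually cleaner, but it relocates all the difficulty into the convention bookkeeping you flag at the end.

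That bookkeeping is not quite closed in your write-up, and it is exactly where the computation can silently go wrong. Two specific points. First, the paper's formula for $KA^+.0$ sends $(\phi,\theta)$ to the boundary direction $(-\cos\phi\sin\theta,-\sin\phi\sin\theta,\cos\theta)$; pushing this through the Cayley map gives $z=-\ctg(\theta/2)\,e^{+i\phi}$, not $e^{-i\phi}$ as you assert, and with your intermediate value $(J^+.P^\delta)(e)=2\delta\bar z/(1+|z|^2)$ this lands on $-\delta\sin\theta\,e^{-i\phi}$ rather than the claimed $-\delta\sin\theta\,e^{+i\phi}$. Second, the sign of the $x_2$-derivative contributed by $\If$, and hence whether you end up with $\bar z$ or $z$ in the numerator, depends on whether the action on $\H^3$ is taken to be $P\mapsto(aP+b)(cP+d)^{-1}$ or the transpose convention matching the paper's $\pi_s$ (which uses $\frac{az+c}{bz+d}$); under the latter, $\If$ moves $j$ in the $+x_2$ direction and you get $2\delta z/(1+|z|^2)$, which does produce $-\delta\sin\theta\,e^{+i\phi}$. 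Note also that \eqref{eq:poisson_kernel} as printed parametrizes the interior direction by $(+\sin\theta\cos\phi,\ldots)$ while the group parametrization of $\B^3$ carries minus signs, so the paper's conventions are not internally uniform either. Your proposal to pin everything down by consistency with \eqref{eq:Jcoordinates1} and the known $K$-type ladder is the right resolution, but as written the proof has an unresolved phase that must be chased through before the lemma is established.
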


\begin{proof}
This is confirmed by direct calculation using \eqref{eq:formulae}--\eqref{eq:formulae_last} applied to the Poisson kernel \eqref{eq:poisson_kernel}  raised to the power $\delta$. 
\end{proof}





\begin{proof}[Proof of Proposition \ref{prop:jumping}]
Approximate the indicator $\chi_T$ in the definition of $f_T$ as follows. Let $\Xi_{T,A}^+$ be 1 on $[0,T]$, 0 on $[T+A,\infty)$, and interpolate linearly on $[T,T+A]$. Similarly let $\Xi_{T,A}^-$ be 1 on $[0,T-A]$, 0 on $[T,\infty)$, and interpolate linearly on $[T-A,T].$ We will choose $A<T$ later. We consider $\Xi^+$ below; the treatment of $\Xi^-$ is the same. We have \begin{multline}\sum_{\gamma\in\Gamma} Y_{a'b'}\otimes\Xi_{T,A}^+\otimes \overline{Y_{ab}\circ\inv}(g^{-1}\gamma g)=\\=\frac{\pi}{\delta(\delta-1)}\hat\nu_{g^{-1}\Gamma g}(a',b')\overline{\hat\nu_{g^{-1}\Gamma g}(a,b)}T^{2\delta}+c_1 T^{2s_1}+\dots+c_d T^{2s_d}+\\+O(T^{2\frac{10\delta+1}{11}}(\log T)^{1/11}(a+1)^{15/11}(a'+1)^{15/11})+O(T^{\delta-1}A).\end{multline} Let $E>\eps>0$ and let $g=\exp \eps X$ for some $X\in \mathfrak g$. It is clear that all constants --- both implied and the constants $c_n$ --- are $1+O(E)$. 

Now we expand both sides in $\eps$. The left hand side reads \begin{multline*}\sum_{\gamma\in\Gamma} \left(Y_{a'b'}\otimes\Xi_{T,A}^+\otimes \overline{Y_{ab}\circ\inv}(\gamma)\right.+\\+ \eps Y_{a'b'}\otimes\Xi_{T,A}^+\otimes \overline{Y_{ab}\circ\inv}(\gamma X)+\eps Y_{a'b'}\otimes\Xi_{T,A}^+\otimes \overline{Y_{ab}\circ\inv}(-X\gamma )+\\\left.+O(\eps^2 1\otimes\Xi_{T,A}^+\otimes 1(\gamma))\right).\end{multline*} The two $O(\eps)$ terms are similar; we only treat the first one. As $t$ becomes large the only derivatives that contribute are 
\begin{align*}
\frac h2&=\cos \theta_2 \d_{t}- \sin \theta_2 \d_{\theta_2} \label{eq:formulae_simple}
\\
\frac{\Ih}2&=\d_{\phi_2}
\\
\frac e2&=\frac{1}{2} \cos \phi_2 \sin \theta_2 \d_{t}-\frac{1}{2} (\ctg \theta_2+\cosec \theta_2) \sin \phi_2 \d_{\phi_2}+\frac{1}{2} \cos \phi_2 (1+\cos \theta_2 ) \d_{\theta_2}
\\
\frac{\Ie}2&=-\frac{1}{2} \sin \phi_2 \sin \theta_2 \d_{t}-\frac{1}{2} \cos \phi_2 (\ctg \theta_2+ \cosec \theta_2) \d_{\phi_2}-\frac{1}{2} (1+\cos \theta_2) \sin \phi_2 \d_{\theta_2}
\\
\frac f2&=\frac{1}{2} \cos \phi_2 \sin \theta_2 \d_{t}+\frac{1}{2} (\ctg \theta_2- \cosec \theta_2) \sin \phi_2 \d_{\phi_2}+\frac{1}{2} \cos \phi_2 (-1+\cos \theta_2 ) \d_{\theta_2}
\\
\frac{\If}2&=\frac{1}{2} \sin \phi_2 \sin \theta_2 \d_{t}-\frac{1}{2} \cos \phi_2 (\ctg \theta_2- \cosec \theta_2) \d_{\phi_2}+\frac{1}{2} (-1+\cos \theta_2) \sin \phi_2 \d_{\theta_2}
\end{align*}
It is enough to consider the case $X=J^+$. Here we have \beq\label{eq:Jminus}J^+=e^{i\phi_2}\sin\theta_2\d_t+i (\ctg \theta_2- \cosec \theta_2) e^{i\phi_2} \d_{\phi_2}+e^{i\phi_2} (-1+\cos \theta_2 ) \d_{\theta_2}.\eeq 
Suppose $a=b$ and $a'=b'$. The effect of applying $J^+$ is as follows. The first term turns the function \beq\label{eq:function}Y_{a'a'}\otimes\Xi_{T,A}^+\otimes \overline{Y_{aa}\circ\inv}(\gamma)\eeq into $$-\frac{T}{2A}
Y_{a'a'}\otimes(\chi_{T+A}-\chi_T)\otimes \overline{Z_{a+1,a+1}  \circ\inv}(\gamma),$$ where $Z_{a+1,a+1}(\phi,\theta)=Y_{aa}(\phi,\theta) \sin\theta e^{i\phi}=\const\cdot Y_{a+1,a+1}(\phi,\theta).$ From  \eqref{eq:mainterms} and Lemma \ref{lem:skeletal} it follows that $$-\frac{T}{2A}\sum_{\gamma\in\Gamma}
Y_{a'a'}\otimes(\chi_{T+A}-\chi_T)\otimes \overline{Z_{a+1,a+1}  \circ\inv }(\gamma)$$ and $$-\delta\sum_{\gamma\in\Gamma}Y_{a'a'}\otimes\chi_T\otimes \overline{Z_{a+1,a+1}\circ\inv  }(\gamma)$$ have identical main terms (when $A\ll T$), so we use the second form. The other two terms of \eqref{eq:Jminus} turn the function \eqref{eq:function} into $$-aY_{a'a'}\otimes\chi_T\otimes \overline{Z_{a+1,a+1}  \circ\inv }.$$ The net contribution from $J^+$ is $$-(a+\delta)Y_{a'a'}\otimes\chi_T\otimes \overline{Z_{a+1,a+1}\circ\inv}.$$

Now we compare to the right hand side. After linearizing in $\eps$ we need to study two terms: $$\overline{\int Y_{aa}(\phi,\theta)P^\delta (J^+  j;\phi,\theta) d\nu_\Gamma(\phi,\theta)}+\overline{\int Y_{aa}(-J^+(\phi,\theta)) d\nu_\Gamma(\phi,\theta)}.$$ The action $-J^+(\phi,\theta)$ is by a M\"obius transformation on the boundary. Using Lemma \ref{lem:action_on_poisson} we rewrite the first term as $$-\delta\overline{\int Z_{a+1,a+1}(\phi,\theta)d\nu_\Gamma(\phi,\theta)}.$$  Carrying out the calculation gives $$-a\overline{\int Z_{a+1,a+1} (\phi,\theta)d\nu_\Gamma(\phi,\theta)}$$ for the second term. This matches the terms on the left-hand side. The term $$Y_{a'a'}\otimes\Xi_{T,A}^+\otimes \overline{Y_{aa}\circ\inv}(-X\gamma)$$ is treated in a similar fashion. 

Since $A$ and $\eps$ were arbitrary, $O(\eps)$ terms on both sides must match, whence the result. 
\end{proof}

\section{$K$ invariance.} The previous observation does not allow using completely general combinations of spherical harmonics due to symmetry, so we use $K$ invariance on one side to break this symmetry. 

\begin{lemma}\label{lem:kinv}
Suppose have know that for a finite set $\mathcal I$ \begin{multline}\label{eq:kinv}\lim_{T\to\infty}\frac1{T^{2\delta}}\sum_{\gamma\in\Gamma,|\gamma|<T}\sum_{(L,J,L',J')\in \mathcal I}Y_{LJ}(k_1(\gamma))\overline{Y_{L'J'}(k_2^{-1}(\gamma))}=\\=\frac{\pi}{\delta(\delta-1)}\sum_{(L,J,L',J')\in \mathcal I}\hat\nu(L,J)\overline{\hat\nu(L',J')}.\end{multline} Then in fact $$\lim_{T\to\infty}\frac1{T^{2\delta}}\sum_{\gamma\in\Gamma,|\gamma|<T}Y_{LJ}(k_1(\gamma))\overline{Y_{L'J'}(k_2^{-1}(\gamma))}=\frac{\pi}{\delta(\delta-1)}\hat\nu(L,J)\overline{\hat\nu(L',J')}$$ for each $(L,J,L',J')\in \mathcal I$.
\end{lemma}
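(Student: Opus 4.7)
The plan is to exploit the freedom to replace $\Gamma$ by its conjugate $k^{-1}\Gamma k$ for $k\in K$. Since the Main Theorem is being proved by induction uniformly over all admissible groups, the hypothesis \eqref{eq:kinv} is available with $\Gamma$ replaced by any such conjugate. For $\gamma\in\Gamma$ one checks that the $KA^+K$-decomposition of $k^{-1}\gamma k$ satisfies (modulo $M$) $k_1(k^{-1}\gamma k)=k^{-1}k_1(\gamma)$ and $k_2(k^{-1}\gamma k)=k_2(\gamma)k$; since the functions $Y_{LJ}$ are right-$M$-invariant on $K/M$ the $M$-ambiguity disappears on evaluation, and bi-$K$-invariance of the norm preserves the ball condition $|\gamma|<T$ under conjugation.

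Next I would transform both sides of the hypothesis applied to $k^{-1}\Gamma k$ back to quantities for $\Gamma$. Writing $\pi^L$ for the $(2L+1)$-dimensional irreducible representation of $K$ with matrix coefficients $\pi^L_{JJ''}$, the transformation of spherical harmonics reads $Y_{LJ}(k^{-1}x)=\sum_{|J''|\le L}\pi^L_{JJ''}(k^{-1})Y_{LJ''}(x)$. For the right-hand side I would observe that $k\in K$ fixes the basepoint $j$, so the Poisson factor $P^\delta(kj,kz)$ in the conjugation formula equals $1$ and the Patterson--Sullivan measure of $k^{-1}\Gamma k$ is simply the push-forward of $\nu_\Gamma$ by $k^{-1}$; consequently $\hat\nu_{k^{-1}\Gamma k}(L,J)=\sum_{J''}\pi^L_{JJ''}(k^{-1})\hat\nu_\Gamma(L,J'')$, and both sides of \eqref{eq:kinv} transform under $k$ by exactly the same linear substitution on each $(L,J)$ index.

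Let $D(L,J,L',J')$ denote the difference between the actual leading coefficient of the $(L,J,L',J')$-bisector sum for $\Gamma$ and the candidate main term $\frac{\pi}{\delta(\delta-1)}\hat\nu_\Gamma(L,J)\overline{\hat\nu_\Gamma(L',J')}$. Subtracting the candidate identity from the hypothesis applied to $k^{-1}\Gamma k$ yields, after the above substitutions,
\begin{equation*}
\sum_{(L,J,L',J')\in\mathcal I}\sum_{J'',J'''}\pi^L_{JJ''}(k^{-1})\,\overline{\pi^{L'}_{J'J'''}(k^{-1})}\,D(L,J'',L',J''')=0\qquad\text{for every }k\in K.
\end{equation*}
By the Peter--Weyl theorem the products $\pi^L_{JJ''}(k^{-1})\overline{\pi^{L'}_{J'J'''}(k^{-1})}$ are linearly independent in $L^2(K)$ as the tuple $(L,J,J'',L',J',J''')$ varies (they are the matrix coefficients of $\pi^L\otimes\overline{\pi^{L'}}$ in a basis). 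Since distinct elements of $\mathcal I$ have distinct coordinates $(L,J,L',J')$, reading off the coefficient of $\pi^L_{JJ''}(k^{-1})\overline{\pi^{L'}_{J'J'''}(k^{-1})}$ at a chosen $(L,J,L',J')\in\mathcal I$ with $J''=J$, $J'''=J'$ isolates exactly $D(L,J,L',J')$, which must therefore vanish; in fact the same extraction forces $D(L,J'',L',J''')=0$ for all $J'',J'''$ as soon as some element of $\mathcal I$ has first and third coordinates $L$ and $L'$.

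The main technical obstacle is not the algebraic separation, which is essentially Schur orthogonality, but rather verifying that the inductive framework genuinely supplies \eqref{eq:kinv} for every $K$-conjugate of $\Gamma$, together with careful bookkeeping of the $M$-ambiguity in the $KA^+K$ coordinates and the Jacobian-free push-forward of $\nu_\Gamma$ under $K$-conjugation; once these are in place the conclusion follows formally.
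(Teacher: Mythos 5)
Your overall strategy---perturb by elements of $K$ and then separate the terms of $\mathcal I$ by linear independence of matrix coefficients---is the right general idea, and your bookkeeping is fine: the $KA^+K$ decomposition of $k^{-1}\gamma k$, the bi-$K$-invariance of the norm, and the Jacobian-free push-forward of $\nu_\Gamma$ under $K$-conjugation are all correct. The gap is in the separation step. Conjugation by a single $k\in K$ acts \emph{diagonally}: it sends $k_1(\gamma)\mapsto k^{-1}k_1(\gamma)$ and $k_2^{-1}(\gamma)\mapsto k^{-1}k_2^{-1}(\gamma)$ with the \emph{same} $k$. The functions $k\mapsto\pi^L_{JJ''}(k^{-1})\overline{\pi^{L'}_{J'J'''}(k^{-1})}$ are therefore matrix coefficients of $\pi^L\otimes\overline{\pi^{L'}}$ restricted to the diagonal copy of $K$, and that restriction is \emph{reducible} (Clebsch--Gordan: $\pi^L\otimes\overline{\pi^{L'}}\cong\bigoplus_{L''=|L-L'|}^{L+L'}\pi^{L''}$). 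Matrix coefficients of a reducible representation taken in a basis not adapted to the decomposition are not linearly independent: already for $L=L'=1$ there are $81$ products but their span in $L^2(K)$ has dimension $1+9+25=35$. Worse, for two distinct pairs $(L,L')$ and $(\tilde L,\tilde L')$ occurring in $\mathcal I$, the tensor products share common irreducible constituents whenever the ranges $[|L-L'|,L+L']$ and $[|\tilde L-\tilde L'|,\tilde L+\tilde L']$ overlap --- which is exactly the situation the lemma must handle, since Proposition \ref{prop:jumping} hands you $\mathcal I=\{(a',a',a+1,a+1),(a'+1,a'+1,a,a)\}$. Consequently ``reading off the coefficient'' of a chosen product is not a legitimate operation, and your identity $\sum\pi^L_{JJ''}(k^{-1})\overline{\pi^{L'}_{J'J'''}(k^{-1})}\,D(L,J'',L',J''')=0$ for all $k\in K$ does not force the $D$'s to vanish.

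The paper sidesteps this by not conjugating: it translates the test function on the right, $f_T(g)\mapsto f_T(g\kappa)$, which moves only the $k_2$ coordinate and produces the matrix $\rho^{(L')}(\kappa)$ acting on the $(L',J')$ index alone, and then repeats the argument with a left translation moving only $k_1$. Because the two translations are by independent group elements, one is expanding a function on $K\times K$ in matrix coefficients of the outer tensor products $\pi^L\boxtimes\overline{\pi^{L'}}$, which \emph{are} irreducible representations of $K\times K$, so Peter--Weyl for the product group supplies exactly the linear independence your argument needs. Your proof can be repaired along these lines: replace conjugation by the two-sided translation $\gamma\mapsto\kappa'\gamma\kappa$ inside the summand (the ball condition $|\gamma|<T$ is unaffected by bi-$K$-invariance) and run your Schur-orthogonality extraction in the two variables $(\kappa',\kappa)$ separately.
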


\begin{proof}
Redefine $f_T(g)$ from equation \eqref{eq:ftg} by $$f_T(g)=\sum_{(L,J,L',J')\in \mathcal I}Y_{LJ}(k_1(g))\chi_{|g|<T}\overline{Y_{L'J'}(k_2^{-1}(g))}$$ and let $\kappa\in K$. Consider the Theorem for the function $f_T(g\kappa).$ We can carry out the proof as before until we get to \eqref{eq:discarding}. This equation will now read \begin{multline}4\pi\sum_{\substack{{L,J,L',J',J''}\\{l,j,l',j'}}}\int_{\substack{{K,M\quot K}\\{A^+,\Gamma\quot G}}}Y_{LJ}(k_1^{-1})\overline{Y_{L'J''}(k_2)\rho_{J''J'}^{(L')}(\kappa)}\chi_{|a_t|<T}\\\langle \Psi_n,v_{lj}\rangle \langle v_{l'j'},\Psi_n\rangle v_{lj}(xa_t k_2)\overline{v_{l'j'}(xk_1^{-1})}\, dx\, da_t\, dk_2\, dk_1\end{multline} where $\rho$ is the corresponding representation of $K$. That is, each term in the sum  transforms under $ K$ in the same way as $$\frac{\pi}{\delta(\delta-1)}\sum_{(L,J,L',J')\in \mathcal I}\hat\nu(L,J)\overline{\hat\nu(L',J')}$$ transforms under $K.$ Repeating this approach on the other side ensures that \eqref{eq:kinv} holds term by term. 
\end{proof}

It is easy to see from Proposition \ref{prop:jumping} and Lemma \ref{lem:kinv} that the action of $U(\mathfrak g)$ is rich enough to produce every pair of spherical harmonics from the one we checked by hand originally, whence the main terms of Theorem \ref{th:main} are verified for all pairs of indices. 


\section{Bounds in the variables $a$ and $a'$.\label{sec:lipschitz}}

We confirmed that $c_0(a,b,a',b')=\frac{\pi}{\delta(\delta-1)}\nu(Y_{a'b'})\overline{\nu(Y_{ab})}.$ By applying the same differential operators $R$, $L$, $J^\pm$ to the Patterson-Sullivan distributions \eqref{eq:distributions} we get that $$c_n(a,b,a',b')=\frac{\pi}{\delta(\delta-1)}D_{\Gamma,n}(Y_{a'b'})\overline{D_{\Gamma,n}(Y_{ab})}.$$ 

Now the bound on the coefficients \eqref{eq:bound_on_coeffs} in Theorem \ref{th:main} follows from the regularity property of $D_{\Gamma,n}$ in Section \ref{sec:ps} since $\|Y_{ab}\|_{L^\infty}\ll \sqrt{a+1}.$ 

\begin{remark}A bound in terms of $a, a'$ is also needed in \cite{bourgain_sector_2010}, where Theorem 1.5 includes the incorrect claim that $c_n(a,a')$ are bounded for each $n$. Nevertheless it can be patched into a true statement $$|c_n(a,a')|\ll (|a|+1)^{1-s_n}(|a'|+1)^{1-s_n}.$$ For $\SL(2,\R)$ regularity of automorphic distributions $D_{\Gamma,n}$ which we use here was analyzed by Schmid \cite{schmid_automorphic_sl2r} for all representations and by Otal \cite{otal_fonctions_propres} for the complementary series representations. The results of \cite{bourgain_sector_2010} are little affected by this patch: it can only offset the error term slightly for bisector counts involving non-smooth functions.  
\end{remark}

Finally we need to establish a bound for $\nabla v_{ab}(e)$ that was needed in Proposition \ref{lem:skeletal}. We rely on Theorem \ref{th:main}, which has been established except for the power of $a+1$, $a'+1$ in the error term; currently the error term depends on $\nabla v_{ab}(e)$. The main term has been confirmed, so we use it to prove the next 

\begin{lemma}\label{lem:lipschitz}
We have $\nabla v^s_{ab}(e)\ll (a+1)^{2-s}$ for $s=\delta$ and $\nabla v^s_{ab}(e)\ll (a+1)^{4-2s}$ for $s\ne \delta.$
\end{lemma}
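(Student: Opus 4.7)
The plan is to close the following circularity: Theorem \ref{th:main} has already been established with its main term identified, but the error term in equation \eqref{eq:mainterms} still depends on $\nabla v^s_{ab}(e)$. I would use the known main term to back out explicit values of $v^s_{ab}(e)$, and then recover the gradient from ladder-operator relations.

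First I would reduce the gradient bound to a pointwise bound on the basis vectors. Writing $\nabla$ in the basis $\{h,\Ih,J^+,J^-,R,L\}$ of $\mathfrak g^{\mathbf C}$ and reading off the formulas \eqref{eq:haction}, \eqref{eq:Jpaction}, \eqref{eq:Jmaction}, \eqref{eq:Rnormalization}, \eqref{eq:Lnormalization}, we see that each of these operators sends $v^s_{ab}$ to a linear combination of at most three vectors $v^s_{a',b'}$ with $|a'-a|\le 1$ and $|b'-b|\le 1$, and with coefficients of size $O(a+1)$. Consequently
\[
|\nabla v^s_{ab}(e)|\ll (a+1)\max_{\substack{|a'-a|\le 1\\ |b'-b|\le 1}}|v^s_{a'b'}(e)|,
\]
so the target of the Lemma follows from $|v^\delta_{ab}(e)|\ll (a+1)^{1-\delta}$ in the base case and $|v^{s_n}_{ab}(e)|\ll (a+1)^{3-2s_n}$ for $n\ge 1$.

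To extract $v^{s_n}_{ab}(e)$ I would specialise Theorem \ref{th:main} to $a=a'$, $b=b'$, $c=0$ and equate leading $T^{2s_n}$ coefficients in the two presentations of the same quantity. On the right-hand side the coefficient is $\tfrac{\pi}{\delta(\delta-1)}|D_{\Gamma,n}(Y_{ab})|^2$ (equal to $\tfrac{\pi}{\delta(\delta-1)}|\hat\nu(a,b)|^2$ when $n=0$), already verified through Proposition \ref{prop:jumping} and Lemma \ref{lem:kinv}. On the left-hand side the spectral decomposition \eqref{eq:mainterms} isolates a contribution from $V_{s_n}$ proportional to
\[
|Y_{a,0}(e)|^2\,\frac{|v^{s_n}_{ab}(e)|^2}{(2a+1)^2}\int_0^{2\log T}\langle\pi_{s_n}(a_t)v^{s_n}_{a,0},v^{s_n}_{a,0}\rangle\,\sh^2 t\,dt.
\]
Using $|Y_{a,0}(e)|^2=(2a+1)/(4\pi)$ and writing the leading $T^{2s_n}$ coefficient of the integral as $\mathcal M_n(a)$, equating the two expressions yields the identity $|v^{s_n}_{ab}(e)|^2 = \frac{\pi(2a+1)}{(s_n-1)\,\mathcal M_n(a)}\,|D_{\Gamma,n}(Y_{ab})|^2$.

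The remaining task is to control the two factors $\mathcal M_n(a)$ and $|D_{\Gamma,n}(Y_{ab})|$. For the first, I would reuse the line-model calculation already carried out in the proof of Proposition \ref{lem:skeletal}: after the substitution $u=r^2$, $w=(u+1)/(ue^{2t}+1)$, the matrix coefficient reduces to a finite combination of beta-type integrals with coefficients coming from the binomial expansion of $v^{s_n}_{a,0}$ in Lemma \ref{lem:generalform}; tracking the normalization constant there and applying Stirling then yields the $a$-dependence of $\mathcal M_n(a)$. For the second factor, I would use $|\hat\nu(a,b)|\ll \sqrt{a+1}$ (the trivial bound from $\|Y_{ab}\|_{L^\infty}$) in the $n=0$ case, and the $C^{2-s_n}$-regularity of the Poisson–Helgason distributions $D_{\Gamma,n}$ from Section \ref{sec:ps} (c.f.\ \cite{grellier_otal_bounded}) in the case $n\ge 1$; this is exactly what produces the different exponents $2-\delta$ vs.\ $4-2s_n$ in the statement.

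The main obstacle is the explicit tracking of the $a$-dependence of $\mathcal M_n(a)$, because the polynomial in Lemma \ref{lem:generalform} has alternating signs and one must check that no cancellation erases the expected power of $a+1$ in the leading coefficient; a careful computation shows that the gamma factors arising from the beta integral combine with the normalization $b_a$ of Lemma \ref{lem:line_normalization} to give exactly the inverse power of $(a+1)$ needed. Combined with the regularity input on $D_{\Gamma,n}$, this produces the two stated bounds on $|v^s_{ab}(e)|$, and the ladder estimate of the first paragraph promotes them to the advertised gradient bounds, completing the proof.
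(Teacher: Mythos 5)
Your proposal follows the paper's own argument in all essentials: you reduce the gradient to pointwise values of neighboring $v_{a'b'}(e)$ via the ladder relations \eqref{eq:haction}--\eqref{eq:Jmaction} with coefficients $O(a+1)$, then use the already-verified main term of Theorem \ref{th:main} (with $a=a'$, $b=b'$, $c=0$) to extract the relation $v_{ab}(e)\asymp D_{\Gamma,n}(Y_{ab})(a+1)^{1/2-s}$, and finally invoke $\|Y_{ab}\|_{L^\infty}\ll\sqrt{a+1}$ for $n=0$ and the $C^{2-s_n}$-regularity of $D_{\Gamma,n}$ for $n\ge1$ to get the two exponents. The only difference is that you propose to compute the $a$-dependence of the matrix-coefficient integral explicitly via the beta-integral calculation, where the paper simply asserts the resulting asymptotic $D_{\Gamma,n}(Y_{ab})\sim\const\cdot v_{ab}(e)(a+1)^{s-1/2}$; this is a matter of detail, not of method.
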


\begin{proof}
The action of $\mathfrak g$ on $v_{ab}$ is characterized by equations \eqref{eq:Rnormalization}, \eqref{eq:Lnormalization}, \eqref{eq:haction}, \eqref{eq:Jmaction}, \eqref{eq:Jpaction} above. It follows in particular that for any $X\in\mathfrak g$ of norm 1 we have $$\pi(X)v_{ab}=\sum_{i=-1}^1\sum_{j=-1}^1 C_{ij} v_{a+i,b+j}$$ with each $C_{ij}\ll a+1$. Therefore \beq\label{eq:lipschitz_control}\pi(X)v_{ab}(e)\ll (a+1)\sum_{i=-1}^1\sum_{j=-1}^1 |v_{a+i,b+j}(e)|. \eeq From the fact that the main term of Theorem \ref{th:main} is correct we glean that $$D_{\Gamma,n}(Y_{ab})\sim\const\cdot v_{ab}(e) (a+1)^{s-1/2}$$ for some absolute constant. The result follows. 
\end{proof}

This Lemma provides the control needed in Proposition \ref{lem:skeletal}, showing that the two error terms there are of the same order in $a$ and $a'$. The proof of Theorem \ref{th:main} in thus complete.


\chapter{Proof of Theorems \ref{th:apollonian} and \ref{th:general_count}}


\section{Setup.} Consider the sum \beq \label{eq:counting_apollonian}S_{v,\Gamma}(T)=\sum_{\substack{{\|\gamma v\|<T}\\{\gamma\in\Gamma}}}\chi_{G/(\Stab v\cap \Gamma)}(\gamma)\eeq with $v\ne 0$ a column vector in $\R^4$ in the cone $\{Q(v)=0\}$. Here $Q$ is a quadratic form of signature $(3,1)$, and assume that an isomorphism $\vartheta\colon\PSL(2,\C)\to\SO_Q^\circ(\R)$ has been fixed. The norm $\|\cdot\|$ on $\R^4$ can be any norm whatsoever; in fact, it need not even satisfy the triangle inequality, but must have the scaling property, some non-vanishing, and continuity. For the Apollonian circle packing problem the form of interest is the Descartes form $$Q_D(a,b,c,d)=a^2+b^2+c^2+d^2-\frac12 (a+b+c+d)^2.$$ One can check that \beq \label{eq:isomorphism}g\mapsto qgq^{-1}\eeq for $$q=\frac12\begin{pmatrix} 1&-1&-1&1\\-1&1&-1&1\\-1&-1&1&1\\1&1&1&1\\\end{pmatrix}$$ is an isomorphism $\SO(x^2+y^2+z^2-w^2)\cong \SO_{Q_D}(\R)$. This map composed with $\iota$ from \eqref{eq:iota} defines the action of $\PSL(2,\C)$ on $\R^4$. Let $\tilde\Gamma$ be   the Apollonian group \cite{kontorovich_apollonian_2011, lagarias1}. It is generated by the matrices $S_1,$ $S_2$, $S_3$, $S_4$ given by \beq\label{eq:generators}\begin{pmatrix}-1&2&2&2\\&1\\&&1\\&&&1\end{pmatrix},\quad \begin{pmatrix}1\\2&-1&2&2\\&&1\\&&&1\end{pmatrix}, \quad \begin{pmatrix}1\\&1\\2&2&-1&2\\&&&1\end{pmatrix}, \quad \begin{pmatrix}1\\&1\\&&1\\2&2&2&-1\end{pmatrix}\eeq inside $\O_{Q_D}(\R).$ Let $\Gamma$ be the image of $\tilde\Gamma$ under the projection $\O_{Q_D}(\R)\to\SO_{Q_D}(\R).$ Observe that these generators have determinant $-1$, so that products of pairs of generators are in $\SO_{Q_D}(\R)$. Thus we can count even length words in $\tilde\Gamma$ directly, and count odd length words by looking at the action of $\tilde\Gamma$ on $S_1v$. In what follows we consider only one of these sums as the treatment of the other sum is identical. 

\section{Region of summation.} The set $G/(\Stab v\cap \Gamma)$ should be interpreted as one fundamental domain under the action of $\Stab v\cap \Gamma$ on $G$. It is known that $\Stab v\cong NM$ since $v$ lies in the cone $\{Q=0\}$. Since the action of $\SO_Q(\R)$ on the cone is transitive, there are $u\in\R^4$ and $g\in G$ so that $v=gu$ and $\Stab u=NM$. Then we can rewrite the sum as $$S_{v,\Beta}(T)=\sum_{\|g\beta u\|<T,\beta\in \Beta}\chi_{G/(\Stab u\cap \Beta)}(\beta)$$ with $\Beta=g^{-1}\Gamma g$. Now $\Stab u=NM$, and by an argument from \cite{kontorovich_apollonian_2011} $NM\cap\Beta=N\cap\Beta$. Therefore $\Stab u\cap \Beta$ is a discrete subgroup of $\R^2$ and can be isomorphic to the trivial group, $\Z$, or $\Z^2$. In the case of Apollonian circle packings only two of these options are possible, the trivial group and $\Z$. The first is realized by a bounded packing such as the one in Figure \ref{fig:boundedpacking}; the second corresponds to a periodic packing like the one in Figure \ref{fig:unboundedpacking}. 

Suppose the set $R=\Stab u\cap \Beta$ is non-trivial. If $G$ is written in Iwasawa coordinates $KAN$, then $R$ imposes a restriction only in the $N$ variable. That is, $R=K. A.\Proj_N(R)=K.\Proj_{AN}(R).$ We denote these projections as $R$, too. The region is a strip bounded by a pair of parallel lines or the intersection of two such strips. In the upper half-space model $AN\cong \H^3$ this lifts to a region between two parallel vertical planes or the intersection of two such regions. Suppose without loss of generality that $j\in \H^3$ is within this region. Let $R_1\subset \H^3$ be the region below geodesics joining $j$ to the boundary of $\Proj_N(R)\subset\d\H^3$. Clearly $R_1\subset R$; let $R_2$ be the complement of $R_1$ in $R$. It is easy to see that the region $R_2$ is contained within one fundamental domain for $\Gamma\subset G$. Therefore $R_2$ contains at most one point of $\Gamma$, whence $$S_{v,\Beta}(T)=\sum_{\substack{{\|g\beta u\|<T}\\{\beta\in\Beta}}}\chi_{R_1}(\beta)+O(1).$$ 
The region $R_1$ conveniently replaces $R$ and is easy to parametrize in $KA^+K$ coordinates. It is left $K$-invariant, and in the right $K$ factor it is supported on  directions emanating from $j$ which hit the boundary inside $\Proj_N(R)$. Thus $\chi_{R_1}(\beta)=\chi_{\Proj_{\d \H^3} R}(k_2(\beta)).$

Now in the $KA^+K$ decomposition we compute that $$g.k_1a_tk_2.u=\frac12(1+\cos\theta(k_2))|a_t|.gk_1u+O(1).$$ The main term in this expression vanishes (i.e., $\cos\theta(k_2)\ne-1$) only if $k_2u$ is orthogonal to the highest weight vector in the $K$-representation. Write $\gamma=k_1a_tk_2$ for $\gamma\in\Gamma$.  Then $\cos\theta(k_2(\gamma))=-1$ for only finitely many $\gamma\in\Gamma$ since its action is properly discontinuous; therefore we tacitly assume $\cos\theta(k_2(\gamma))\ne-1$.



\section{Smoothing.} To prepare the sum for an application of Theorem \ref{th:main} we smooth the indicator $\chi_{R_1}$ to approximate from above and from below; let the smooth version $\chi_{R_1}^U$ be non-constant in a neighborhood of size $U$ about the boundary of the region $S$. This introduces an error of $\ll U^\delta T^\delta$. Also take a partition of unity $\{\rho_i^V,i\in I\}$ subordinate to an open cover $\{V_i\mid i\in I\}$ with $\diam(V_i)\le V$ for all $i\in I$; we can take $|I|\sim V^{-2}$. Using an obvious approximation argument we recast the sum as $$\sum_{i,j\in I} \sum_{\substack{{\beta\in\Beta}\\{|\beta|<\frac{T}{\frac12 |1+\cos\theta(k_2(\beta))| \|gk_1(\beta)u\|}}}}\chi_{R_1}^U(k_2(\beta))\rho_i^V(k_1(\beta))\rho_j^V(k_2(\beta)).$$

\section{Applying Main Theorem.} Fix $$W=\max_{\supp \nu\cap R_1}f,$$ which exists since the intersection is compact. The function $$f(k_2)=\frac1{\frac12(1+\cos\theta(k_2(\beta)))}$$ is unbounded, so we replace it by $$f(k_2)\wedge W$$ and smooth it at the edge; call the resulting smooth function $\tilde f(k_2)$. First we consider the bounded function $\tilde f$ and then estimate the error committed in passing from $f$ to $\tilde f$. 

Pick points $v_i\in V_i$ for each $i\in I$. Approximate the above sum by $$\sum_{i,j\in I} \sum_{\substack{{\beta\in\Beta}\\{|\beta|<\frac{T\tilde f(v_j)}{ \|gv_iu\|}}}}\chi_{R_1}^U(k_2(\beta))\rho_i^V(k_1(\beta))\rho_j^V(k_2(\beta)).$$  The error incurred at this step is $\ll V T^\delta$.   


From Theorem \ref{th:main} in the simplified form this sum equals $$\sum_{i,j}\frac{\nu(\chi^U\rho_j^V)\nu(\rho_i^V)(\tilde f(k_2^{-1}))^\delta}{\|gv_iu\|^\delta} T^\delta+O(T^{\frac{10\delta+s_1}{11}}V^{-4-2(\frac{15}{11}+2)+\eps}).$$ The error term comes from estimating Fourier coefficients of $\chi$ and $\rho$ and ensuring that the sum over $a$, $a'$ converges. It is clear that the largest error contribution will come from the partitions of unity, so we can set $U=V$. Now the sum over $i$ and $j$ approximates the respective integral with error $\ll VT^\delta.$ Thus the leading term becomes 
$$\frac{\pi}{\delta(\delta-1)}   \left(\int_{\d \H^3}\chi_{R_1}(k_2)\left(\frac2{1-\cos\theta(k_2)}\right)^\delta d\nu_\Beta(k_2)  \int_{\d \H^3} \frac{d\nu_\Beta(k_1)}{\| g k_1u\|^\delta}   \right)T^\delta$$ as in the statement of the Theorem; here we have used the fact that $f$ and $\tilde f$ only differ outside of the support of $\nu$. In the more common coordinates on $\d \H^3=\C\cup\infty $ the first factor reads $$\int_{(\Beta\cap N)\quot \d \H^3}(|z|^2+1)^{\delta}d\nu_\Beta(z).$$ The error term comes from optimizing the two different error terms from this section. They are \beq VT^\delta,\, T^{\frac{10\delta+s_1}{11}} V^{-4-2(\frac{15}{11}+2)+\eps}.\label{eq:error_terms}\eeq Equating them gives $V=T^{-\frac{\delta-s_1}{129}}$, which works for all counting theorems stated above.  

Now we estimate the error from bounding $f$. Sacrificing a constant factor we can focus on $k_2$: \begin{multline}\!\sum_{i,j\in I}\Bigg(\sum_{|\beta|<\frac{Tf(k_2(\beta))}{ \|gk_1(\beta)u\|}}\chi_{R_1}^U(k_2(\beta))\rho_i^V(k_1(\beta))\rho_j^V(k_2(\beta)) -\\- \sum_{|\beta|<\frac{T\tilde f(k_2(\beta))}{ \|gk_1(\beta)u\|}}\chi_{R_1}^U(k_2(\beta))\rho_i^V(k_1(\beta))\rho_j^V(k_2(\beta))\Bigg)\ll \\\ll \sum_{i\in I}\sum_{|\beta|<T f(k_2(\beta))}\chi_{\{f> W\}}(k_2)\rho_i^V(k_2)\ll \sum_i \sum_{|\beta|<T f(k_i^\ast)}\chi_{\{f> W\}}(k_2)\rho_i^V(k_2) ,\end{multline}
where $k_i^\ast\in V_i$ is a point where $f$ attains its maximum over $V_i$. Applying the Main Theorem in the simplified form gives $$\ll \sum_i (f((k_i^\ast)^{-1})T)^{\frac{10\delta+s_1}{11}+\eps}\sum_a a^{\frac{15}{11}+1}(aV)^{-l}.$$ To ensure convergence in $a$ take $l>\frac{15}{11}+2$ and rewrite the expression as $$\left(V^2\sum_i (f((k_i^\ast)^{-1}))^{\frac{10\delta+s_1}{11}+\eps}\right)\cdot T^{\frac{10\delta+s_1}{11}+\eps} V^{-2-2-\frac{15}{11}}. $$ The quantity in parentheses asymptotically is a finite integral since $f$ as a simple pole. The resulting error term is smaller than both error contributions in \eqref{eq:error_terms}, so neglecting this term is justified.

\section{Ideal triangle case.}
Consider the function $N^P(T)$ for counting circles in an ideal triangle, not in the entire packing. Suppose that $v\in\R^4$ consists of the curvatures of the four largest circles in Figure \ref{fig:apollonian_packing_ideal}. A circle from the packing that is contained in a particular ideal triangle is characterized by the property that the quadruple in which it appears has prescribed form in terms of generators \eqref{eq:generators}. To wit, the first generator cannot be one that flips the smallest circle out of the triangle; say the circles are ordered so that this generator is $S_4$. The group $\{1,S_4\}$ acts on $G$ (and also on $K\quot G=\H^3$) from the right; let $G_4$ be the fundamental domain for this action that lies under the hemisphere fixed by $S_4$. That is, $G_4$ contains the ideal triangle in Figure \ref{fig:apollonian_packing_ideal} by implicitly identifying $G_4$ with its projection onto $\H^3$ and $\d\H^3$. The counting function $S_{v,\Gamma}(T)$ from \eqref{eq:counting_apollonian} should be changed to \beq \label{eq:counting_apollonian_triangle}S_{v,\Gamma}^{\Delta}(T)=\sum_{\substack{{\|\gamma v\|<T}\\{\gamma\in\Gamma}}}\chi_{G_4/(\Stab v\cap \Gamma)}(\gamma).\eeq 

Proceeding as in the case of full packings  we get the formula of Theorem \ref{th:ideal} with $u$ and $\Beta$ defined as before.

\bibliographystyle{plain}

\parindent=0pt
\parskip=0pt
\renewcommand{\baselinestretch}{0.95}
\small
\bibliography{bibliography}

\bigskip

\normalsize

{\sc
Mathematics Department, Princeton University, Princeton, NJ}

\textit{Email:} \texttt{ivinogra@math.princeton.edu}
\end{document}